\documentclass[preprint,3p,12pt]{elsarticle}

\usepackage{amssymb}
\usepackage{amsmath}
\usepackage{amsthm}
\usepackage{bbm}
\usepackage{mathtools}
\usepackage[normalem]{ulem}
\usepackage{tikz}

\theoremstyle{definition}
\newtheorem{example}{Example}
\newtheorem{definition}{Definition}
\newtheorem{proposition}{Proposition}
\newtheorem{lemma}{Lemma}
\newtheorem*{remark}{Remark}
\newtheorem{assumption}{Assumption}
\newtheorem{theorem}{Theorem}
\newtheorem{corollary}{Corollary}[theorem]

\newcommand{\Var}{\operatorname{Var}}

\newcommand{\A}[1][]{A_t^{#1}(\epsilon)}
\newcommand{\Z}{\boldsymbol{Z}_{t, T-t, s}^{i}}
\newcommand{\z}[1][i_0, i]{Z_t^{#1}}
\newcommand{\Y}[1][]{%
  \ifx\relax#1\relax
    \boldsymbol{Y}_{t-1, 1, s}^{i}%
  \else
    \boldsymbol{Y}_{t-1, 1, s_{#1}}^{i_{#1}}%
  \fi
}
\newcommand{\V}[1]{V_{#1}}
\newcommand{\assumptions}{%
 \ref{infinite_matrix}--\ref{ass:E_W}
}
\newcommand{\assumptionsHS}{%
 \ref{infinite_matrix}--\ref{ass:extinct}
}

\usepackage{hyperref}

\usepackage{subcaption}
\usepackage{algorithm}
\usepackage{algorithmic}
\usepackage{multirow}

\usepackage{xcolor}

\usepackage{lineno}

\begin{document}

\begin{frontmatter}

\title{On the Coalescence Time Distribution in Multi-type Supercritical Branching Processes}

\author[warwick]{Janique Krasnowska} 
\author[warwick,warwick-cs]{Paul A.\ Jenkins}
\author[warwick]{Adam M.\ Johansen}

%% Author affiliation
\affiliation[warwick]{organization={Department of Statistics, University of Warwick},%Department and Organization
            %addressline={}, 
            city={Coventry},
            postcode={CV4 7AL}, 
            country={United Kingdom}}
\affiliation[warwick-cs]{organization={Department of Computer Science, University of Warwick},%Department and Organization
            %addressline={}, 
            city={Coventry},
            postcode={CV4 7AL}, 
            country={United Kingdom}}

%% Abstract
\begin{abstract}
%% Text of abstract
Consider a population evolving as a discrete-time supercritical multi-type Galton--Watson process. Suppose we run the process for $T$ generations, then sample $k$ individuals uniformly at generation $T$ and trace their genealogy backwards in time. In the limiting regime as $T \rightarrow \infty$, the expected behaviour of the sample's ancestry has been analysed extensively in the single-type case and, more recently, for multi-type processes in the critical case. In this paper, we present a formula for the distribution function of the generation $t$ of the most recent common ancestor in terms of the limiting distribution of the normalised population size. In addition, we provide effective bounds for the decay of this distribution function to 1 in terms of the harmonic moments of the population size at generation $t$. 
In order to better understand the behaviour of these harmonic moments, we use a multi-type generalisation of the Harris--Sevastyanov transformation to express harmonic moments at generation $t$ in terms of moments of the transformed process at the first generation. We present numerical results demonstrating that it is possible to approximate the coalescence time distribution effectively in practical settings.
\end{abstract}
 
%% Keywords
\begin{keyword}
Discrete-time branching process \sep Coalescent Process \sep Genealogy \sep 
Galton--Watson Trees \sep Harmonic moments \sep Harris--Sevastyanov Transformation 
\end{keyword}

\end{frontmatter}
\section{Introduction}
\thispagestyle{empty}
Branching processes constitute a broad class of stochastic models which find applications in areas such as biology \cite{cell_cycle, luria, colon, complexity}, ecology \cite{lambert}, epidemiology \cite{epidemiology}, physics \cite{harris1963}, and computational statistics \cite{multilevel}. 
In the simplest possible setting, consider a discrete-time single-type branching process where each individual reproduces independently, following an identical offspring distribution, and the offspring of all individuals at generation $t$ forms the next generation. This model and variations thereof are often referred to as a \emph{Galton--Watson process}, which can for example be used as a suitable model for populations of cells, genes, or biomolecules \cite{branching_biology}. 
A natural generalisation of the Galton--Watson process is to include a variety of types of individuals with different offspring distributions. Such \emph{multi-type} branching processes find applications in theoretical biology \cite{cell_cycle}, evolutionary theory \cite{colon, complexity}, applied ecology \cite{lambert}, and genetics \cite{luria}, yet these models are less well understood theoretically. In this paper, we study a multi-type branching process in which the number of types may be finite or countably infinite. Our results apply to both settings, though the countably infinite case requires slightly more restrictive assumptions.

Branching processes are endowed with a natural notion of genealogy. Understanding the ancestry of a sample backwards in time is a fundamental question in evolutionary biology. Phenomena such as genetic drift or the selective advantage of an allele in an evolving population can be investigated in a more computationally efficient way by simulating an ancestral tree backwards instead of running the process forwards in time \cite{maddison, Rosenberg2002}. On longer timescales, genealogical trees are also essential to the study of speciation and extinction \cite{coalescent, speciation, popvic}. Results have recently been obtained in the case of non-neutral populations of fixed size (see \cite{koskela2024genealogical} and references therein) but these techniques cannot easily be generalised to the setting of branching processes. The genealogy of a branching process has been studied extensively in the single-type case \cite{lambert, Athreya_2012, athreya_super, harris2020, johnston}. 

Branching processes are classified according to their expected rate of growth. If the expected number of offspring is greater than $1$, then the process is said to be \emph{supercritical} and the population size either grows geometrically or becomes zero (the latter is termed \emph{extinction} and occurs with a probability denoted by $q \in [0, 1)$). 
For a critical, multi-type, continuous-time branching process, its genealogy has been recently described in \cite{osvaldo}. Supercritical multi-type branching process are considered by \cite{Hong2015} who finds the distribution of the generation of the most recent common ancestor (MRCA) in a model with finitely many types that cannot die out.

In this paper, we obtain an expression for the distribution function of the generation, $t$, of the MRCA, for a sample of $k \ge 2$ individuals drawn uniformly without replacement from generation $T$. The expression is given in terms of the limiting distribution of the normalised population size and generalises the result of \cite{Hong2015} to a process that can become extinct and can have a countably infinite number of types. 
In addition, we provide upper and lower bounds with explicit constants for this distribution function. Initially, the bounds depend on the harmonic means of the population size at time $t$, which may not be easily computable. In order to obtain practically useful bounds, we use a generalisation of the Harris--Sevastyanov transformation to a multi-type branching process, which allows us to express harmonic moments of the process at generation $t$ in terms of harmonic moments of the transformed process at the first generation. 

The outline of this paper is as follows. In Section \ref{prelim} we recall some fundamental results about branching processes. We then state our main results in Section \ref{main_results}: Theorem \ref{exact_thm} relates the distribution function of the coalescence time probability in terms of a collection of random variables which characterise the limiting growth of the population; Theorem \ref{harmonic_thm} provides upper and lower bounds on the same probability in terms of the harmonic moments of the total population size conditioned on non-extinction; and Theorem \ref{hs_thm} relates those moments to those of a simpler transformed process that cannot die out. Proofs of these results are given in each of the following three sections. Section \ref{sec:numerical} provides numerical simulations illustrating the practical utility of our results, while Section \ref{sec:discussion} briefly concludes.

\section{Preliminaries}\label{prelim}

Let $\mathbb{N}_0 := \{0, 1, 2,\dots\}$, $\mathbb{N}_+ := \mathbb{N}_0 \setminus \{0\}$, and for integer $n \in \mathbb{N}_+$ denote by $[n]$ the set $\{1,\dots,n\}$. The falling factorial of $x \in \mathbb{R}$ will be denoted by $(x)_k = \prod_{j=0}^{k-1} (x - j)$ for $k \in \mathbb{N}_+$. For $d \in \mathbb{N}_+ \cup \{\infty\}$, $\boldsymbol{e}_i$ will denote the $i$th canonical basis column vector in $\mathbb{R}^d$ endowed with the $\ell^2$ inner product, and $\boldsymbol{0} := (0,0,\dots)^\top$ and $\boldsymbol{1} := (1,1,\dots)^\top$ denote the vectors of all zeros and all ones respectively. The inner product of any two vectors $\boldsymbol{u}$, $\boldsymbol{v}$ will be denoted by $\boldsymbol{u} \cdot \boldsymbol{v}$, and it will be convenient to use $\vert \boldsymbol{u} \vert$ to denote the 1-norm. The element-wise product and quotient of two vectors will be represented by $\boldsymbol{u}\odot\boldsymbol{v}$ and $\boldsymbol{u}\oslash\boldsymbol{v}$ respectively. All (in)equalities between random variables are assumed to hold almost surely unless otherwise specified.

Consider a multi-type Galton--Watson process
\begin{equation*}
    \boldsymbol{Z}^{i_0} = \left(\boldsymbol{Z}_t^{i_0} = \begin{bmatrix}
        Z_t^{i_0, 1} \\
        Z_t^{i_0, 2}\\ 
        \vdots 
    \end{bmatrix} :\: t=0,1,\dots\right)
\end{equation*}
with type-space $S \subseteq\mathbb{N}_+$ started from a single individual of type $i_0\in S$ and with $Z_t^{i_0,i}$ descendants of type $i$ at time $t$. We let $d := |S|$ denote the number (possibly infinite) of types, and without loss of generality we enumerate types so that either $S=\mathbb{N}_+$ or $S=[d]$. The process $\boldsymbol{Z}^{i_0}$ is a time-homogeneous discrete-time Markov process on $\mathbb{N}_0^{d}$ such that 
\[
\boldsymbol{Z}_{t+1}^{i_0}\bigm| \boldsymbol{Z}_{t}^{i_0} = \sum_{i \in S} \sum_{j=1}^{Z_{t}^{i_0, i}} \boldsymbol{\xi}^i_{t,j},
\]
where $(\boldsymbol{\xi}^i_{t,j}: j=1,\dots,Z_{t}^{i_0,i})$ are i.i.d.\ $\mathbb{N}_0^{d}$-valued random vectors representing the offspring associated with parent $j$ of type $i$ in generation $t$. The distribution of $ \boldsymbol{\xi}^i_{t,j}$ will be denoted
\[
p_i(\boldsymbol{v}) \coloneq \mathbb{P}(\boldsymbol{\xi}^i_{t,j} = \boldsymbol{v}), \qquad \boldsymbol{v} \in \mathbb{N}_0^d.
\]
If the type of the root is omitted in the superscript of $\boldsymbol{Z}_t$, we assume the root ancestor to be of type $i_0$.

The branching property permits us to decompose $\boldsymbol{Z}^{i_0}_T$, for any fixed $t$, as
\begin{equation}\label{eq:Zdecomposition}
\boldsymbol{Z}^{i_0}_T = \sum_{i\in S}\sum_{s=1}^{Z_t^{i_0, i}} \boldsymbol{Z}^i_{t,T-t,s},
\end{equation}
where $\boldsymbol{Z}_{t, T-t, s}^i$, $s=1,\dots,Z_t^{i_0,i}$ are independent branching processes started at generation $t$ from an individual of type $i$ and evolving for a further $T-t$ generations; that is, $\boldsymbol{Z}_{t, T-t, s}^i$ represents the descendants of the $s$th individual of type $i$ who existed in generation $t$ and were themselves a descendant of the original founder of type $i_0$. The law of each $\boldsymbol{Z}_{t, T-t, s}^i$ is the same as that of $\boldsymbol{Z}^i_{T-t}$. We make extensive use of \eqref{eq:Zdecomposition} later.

The long-term behaviour of $\boldsymbol{Z}^{i_0}$ can be described in terms of the mean offspring matrix $\boldsymbol{M}=(m_{ij})_{i,j \in S}:=(\mathbb{E}(Z_1^{i, j}))_{i,j \in S}$. For the powers $\boldsymbol{M}^r=(m_{ij}^{(r)})$, $r \in \mathbb{N}_+$, to be well defined, we assume that $\boldsymbol{M}$ is such that all $\boldsymbol{M}^r$ are elementwise finite.

$\boldsymbol{M}$ is said to be \emph{irreducible} if for any pair of indices $i,j \in S$ there exists some positive integer $k$ such that $m_{ij}^{(k)} > 0$. The assumption of irreducibility allows for the existence of a general limiting distribution for the system. Without it, different subsets of communicating types can exhibit distinct limiting behaviours.

\begin{assumption}\label{infinite_matrix}
    $\boldsymbol{M}$ is irreducible and all $m_{ij}^{(r)}$, $r \in \mathbb{N}_+$, $i,j \in S$, are finite.
\end{assumption}

For a pair $i,j \in S$, define the power series
\begin{equation*}
    M_{ij}(z) := \sum_{n=0}^{\infty} m_{ij}^{(n)} z^n, \qquad z \in \mathbb{C}.
\end{equation*}
Under Assumption \ref{infinite_matrix}, $M_{ij}(z)$ has a common radius of convergence $R \in [0, \infty)$ for all $i,j \in S$ (Chapter 6.1 of \cite{seneta}). If for all $i, j \in S$ we have that $M_{ij}(R) < \infty$, then for any $i,j \in S$, the sequence of rescaled population sizes $R^T Z_T^{i, j}$ converges in mean to 0 as $T \to \infty$. Hence, in order to obtain a non-trivial limit, we are interested in the case where for any $i,j \in S$
\begin{equation}
    M_{ij}(R) = \infty. \label{eq:unique}
\end{equation}
A non-negative and non-zero $\boldsymbol{\nu}^{\top}=((\nu_i)_{i \in S})^{\top} \in \mathbb{R}^{d}$ is called an \textit{$R$-invariant measure} (with the obvious abuse of terminology) if it satisfies
\begin{equation*}
    R \boldsymbol{\nu} \boldsymbol{M} = \boldsymbol{\nu},
\end{equation*}
and a non-negative and non-zero $\boldsymbol{u}=(u_i)_{i \in S} \in \mathbb{R}^{d}$ is called an \textit{$R$-invariant vector} if it satisfies

\begin{equation*}
    R \boldsymbol{M} \boldsymbol{u} = \boldsymbol{u}.
\end{equation*}
In the case when equation \eqref{eq:unique} holds, $\boldsymbol{u}$ and $\boldsymbol{\nu}$ exist, are strictly positive and are unique up to a constant multiple (see for example \cite{moy1967}). Under the additional assumption that $\boldsymbol{u}\cdot \boldsymbol{\nu} < \infty$, we can normalise $\boldsymbol{\nu}$ and $\boldsymbol{u}$ so that they satisfy $ \boldsymbol{u} \cdot\boldsymbol{\nu} = 1$ and $\boldsymbol{\nu} \cdot \boldsymbol{1}=1$.

We collect the additional regularity assumption imposed upon our process in the following:
\begin{assumption}\label{R-positive}
  $M_{ij}(R) = \infty$ for any pair $i,j \in S$, $\boldsymbol{u} \cdot\boldsymbol{\nu} < \infty$, and $R \in (0, 1)$ (the process is supercritical).
\end{assumption}
Under Assumptions \ref{infinite_matrix} and \ref{R-positive}, the population size of any type grows asymptotically exponentially with a random initial scaling factor. We state this more formally below.
\begin{theorem}[Theorem 1 of \cite{moy1967}] \label{moy}
Under Assumptions~\ref{infinite_matrix} and \ref{R-positive}, for all possible founding ancestor types $i_0 \in S$, there exists a real, non-negative random variable $W^{(i_0)}$ with $\mathbb{E}(W^{(i_0)}) = u_{i_0}$ such that as $T \rightarrow \infty$,
\begin{equation*}
    R^T Z_T^{i_0,j} \xrightarrow{L^2} \nu_j W^{(i_0)},
\end{equation*}
and, hence, the full population size satisfies
\begin{equation*}
   R^T |\boldsymbol{Z}_T^{i_0}| \xrightarrow{L^2} W^{(i_0)}, \qquad T \to \infty.
\end{equation*}
\end{theorem}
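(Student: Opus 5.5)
The natural route is the classical $L^2$ martingale argument (Kesten--Stigum / Moy), adapted to the countable-type setting. Define $W_T := R^T \boldsymbol{Z}_T^{i_0}\cdot\boldsymbol{u}$. Since $R\boldsymbol{M}\boldsymbol{u}=\boldsymbol{u}$ and, by the branching property, $\mathbb{E}(\boldsymbol{Z}_{T+1}^{i_0}\mid\boldsymbol{Z}_T^{i_0}) = (\boldsymbol{Z}_T^{i_0})^\top\boldsymbol{M}$, we get $\mathbb{E}(W_{T+1}\mid\mathcal{F}_T)=W_T$. Thus $(W_T)$ is a non-negative martingale with $\mathbb{E}W_T=u_{i_0}$.

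First I would show that $(W_T)$ is bounded in $L^2$. The conditional variance decomposition gives
\[
\Var(W_{T+1}) - \Var(W_T) = R^{2(T+1)}\,\mathbb{E}\Bigl(\sum_i Z_T^{i_0,i}\sigma_i^2\Bigr) = R^{2(T+1)}\sum_i m^{(T)}_{i_0 i}\sigma_i^2,
\]
where $\sigma_i^2=\Var(\boldsymbol{\xi}^i\cdot\boldsymbol{u})$. Because $m^{(T)}_{i_0 i}\sim R^{-T}u_{i_0}\nu_i$ (ratio-limit / $R$-positivity theory, Seneta Ch.~6), the increments are of order $R^{T}\sum_i \nu_i\sigma_i^2$. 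This series is summable since $R<1$, provided $\sum_i\nu_i\sigma_i^2<\infty$. That summability is a second-moment hypothesis implicit in Moy's theorem, and I would state it as such. It follows that $W_T\to W^{(i_0)}$ in $L^2$, with $\mathbb{E}W^{(i_0)}=u_{i_0}$.

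Next I would identify the limit of each coordinate. Write $R^{T+n}Z^{i_0,j}_{T+n} = R^T\sum_i\sum_{s\le Z^{i_0,i}_T} R^n Z^{i,j}_{T,n,s}$, using decomposition \eqref{eq:Zdecomposition}. Conditional on $\mathcal{F}_T$, its mean is $R^T\sum_i Z_T^{i_0,i}\,R^n m^{(n)}_{ij}$. As $n\to\infty$ this tends to $R^T\sum_i Z^{i_0,i}_T u_i\nu_j = \nu_j W_T$, using $R^nm^{(n)}_{ij}\to u_i\nu_j$. Its conditional variance is $R^{2T}\sum_i Z_T^{i_0,i}\,\Var(R^nZ^{i,j}_n)$, whose expectation is $O(R^{T})$ by the same variance bound applied to each starting type. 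Hence
\[
\mathbb{E}\bigl(R^{T+n}Z^{i_0,j}_{T+n}-\nu_jW_T\bigr)^2 \le \varepsilon_n + O(R^T).
\]
Letting $n\to\infty$ and then $T\to\infty$, and combining with $W_T\to W^{(i_0)}$ in $L^2$, gives $R^TZ^{i_0,j}_T\to\nu_jW^{(i_0)}$ in $L^2$.

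For the total population, summing over $j$ requires uniform control in $j$. I would bound the tail $\sum_{j>J}R^TZ^{i_0,j}_T$ in $L^2$ by $\mathbb{E}$ and variance estimates involving $\sum_{j>J}R^T m^{(T)}_{i_0j}$. This tail is small uniformly in $T$ by the convergence $R^T m^{(T)}_{i_0 j}\to u_{i_0}\nu_j$ together with $\boldsymbol{\nu}\cdot\boldsymbol{1}=1$, via a Scheffé-type argument. This uniform tail control in the countable-type case is the main obstacle. It needs $R^T\boldsymbol{M}^T\boldsymbol{1}$ to converge, or a domination condition, and I expect it to require exactly the regularity assumed in Moy's theorem; in the finite-type case it is immediate.
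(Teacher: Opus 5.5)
\textbf{There is no internal proof to compare against.} The paper does not prove this statement; it quotes it from \cite{moy1967}. Your architecture is the classical one: show that the martingale $W_T=R^T\boldsymbol{Z}_T^{i_0}\cdot\boldsymbol{u}$ is bounded in $L^2$, then identify the coordinates through \eqref{eq:Zdecomposition} at an intermediate generation. You are right that Assumptions~\ref{infinite_matrix}--\ref{R-positive} cannot by themselves give $L^2$ convergence. For $d=1$ they hold for every supercritical offspring law with finite mean, and if the variance is infinite then $R^TZ_T\notin L^2$.

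\textbf{Missing aperiodicity hypothesis.} There is a second missing hypothesis you did not flag, and your coordinate-identification step fails exactly there. For an $R$-positive matrix, $R^n m^{(n)}_{ij}\to u_i\nu_j$ holds only when $\boldsymbol{M}$ is aperiodic, whereas Assumption~\ref{infinite_matrix} gives only irreducibility. This cannot be repaired inside the proof, because the conclusion is false for periodic $\boldsymbol{M}$. Take two types, where each type-$1$ individual has exactly $4$ children of type $2$ and each type-$2$ individual has exactly one child of type $1$. Then $R=1/2$, $\boldsymbol{u}=(3/2,3/4)^\top$, $\boldsymbol{\nu}=(1/3,2/3)$, and Assumptions~\ref{infinite_matrix}--\ref{R-positive} hold. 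Your martingale is constant, $W_T\equiv 3/2$, so the $L^2$ step survives. Yet $R^T|\boldsymbol{Z}^1_T|$ alternates between $1$ and $2$, and each coordinate vanishes in every other generation. The paper's Example~\ref{deterministic2} also has period $2$: there $Z_T^{i_0,j}=0$ whenever $j\not\equiv i_0+T \pmod 2$, although $\nu_jW^{(i_0)}=\nu_j>0$. So aperiodicity must be added alongside your moment condition.

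\textbf{Infinite sums over types.} Pointwise asymptotics $m^{(T)}_{i_0 i}\sim R^{-T}u_{i_0}\nu_i$ cannot be summed over infinitely many $i$. What makes the $L^2$ step work is the exact bound $\nu_{i_0}m^{(T)}_{i_0 i}\le(\boldsymbol{\nu}^\top\boldsymbol{M}^T)_i=R^{-T}\nu_i$. The companion bound $R^s m^{(s)}_{lj}\le u_l/u_j$, which follows from $R^s\boldsymbol{M}^s\boldsymbol{u}=\boldsymbol{u}$, is what lets you take $n\to\infty$ inside $\sum_i$ in the coordinate step, with a multiple of $W_T$ as the dominating variable.

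\textbf{Coordinate variances.} These are not controlled by $\sum_i\nu_i\Var(\boldsymbol{\xi}^i\cdot\boldsymbol{u})$: in Example~\ref{deterministic2} these variances all vanish, yet the coordinates are random. One has $\Var(R^nZ_n^{i,j})=\sum_{t<n}R^{2t+2}\sum_k m^{(t)}_{ik}\Var(\boldsymbol{\xi}^k\cdot\boldsymbol{a}^{(n-t-1)})$ with $a^{(s)}_l=R^s m^{(s)}_{lj}\le u_l/u_j$. From this you need a second-moment condition such as $\sum_k\nu_k\mathbb{E}((\boldsymbol{\xi}^k\cdot\boldsymbol{u})^2)<\infty$. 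The $O(R^T)$ bound then closes only if you first use $\sum_i m^{(T)}_{i_0 i}m^{(t)}_{ik}=m^{(T+t)}_{i_0 k}$ and only afterwards apply the $\boldsymbol{\nu}$-bound. Bounding type by type and then summing over $i$ diverges.

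\textbf{Total population.} The step you leave open is half easier than you expect. The Doob transform $P_{ij}=Rm_{ij}u_j/u_i$ is a positive recurrent stochastic matrix with stationary law $(u_j\nu_j)_j$, and $R^T(\boldsymbol{M}^T\boldsymbol{1})_{i_0}=u_{i_0}\sum_j(\boldsymbol{P}^T)_{i_0 j}/u_j$. A last-exit decomposition at $i_0$ shows this tends to $u_{i_0}\,\boldsymbol{\nu}\cdot\boldsymbol{1}=u_{i_0}$ in the aperiodic case, with no further regularity. This uses $\boldsymbol{\nu}\cdot\boldsymbol{1}<\infty$, which the paper's normalisation presupposes. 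So Scheff\'e does handle the mean of the tail. The genuinely missing ingredient is uniform-in-$T$ control of $\Var(\sum_{j>J}R^TZ_T^{i_0,j})$. This goes through under the moment condition above together with, for example, $\inf_i u_i>0$ (the paper's Assumption~\ref{ass:E_W}), which gives $R^s\boldsymbol{M}^s\boldsymbol{1}\le\boldsymbol{u}/\inf_i u_i$. In particular, the ``hence'' in the statement does not follow from coordinatewise convergence alone.
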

For $i_0 \in S$, define the \emph{generating function} of $\boldsymbol{Z}_1^{i_0}$, $f^{i_0}: [0,1]^{d} \rightarrow \mathbb{R}$, as
\begin{equation*}
    f^{i_0}(\boldsymbol{s}) := \sum_{\boldsymbol{w}\in \mathbb{N}_0^{d}}\left( p_{i_0}(\boldsymbol{w}) \prod_{j \in S}s_j^{w_j}\right),
\end{equation*}
where we adopt the convention that $0^0=1$ so that $f^{i_0}(\boldsymbol{0})=p_{i_0}(\boldsymbol{0})$. For $t \in \mathbb{N}_+$, the generating function of $\boldsymbol{Z}_t^{i_0}$ is given by iterates of $f^{i_0}$ (see for example Chapter 2.2 of \cite{harris1963}):
\begin{equation*}
    f^{i_0}_{t+1}(\boldsymbol{s}) = f^{i_0}(f_t^1(\boldsymbol{s}), \dots, f_t^d(\boldsymbol{s}))
\end{equation*}
with the convention that $f_1^{i_0}(\boldsymbol{s}) = f^{i_0}(\boldsymbol{s})$ and $f_0^{i_0}(\boldsymbol{s}) = s_{i_0}$. 

    Let us introduce the notion of extinction: the process becomes \emph{extinct} at time $t$ if $|\boldsymbol{Z}_t|=0$ and $|\boldsymbol{Z}_{t-1}|>0$; in this case $|\boldsymbol{Z}_s|=0$ for all $s \geq t$. For each $i \in S$, let $q_i$ be the probability of eventual extinction of the process initiated with a single particle of type $i$. Define a vector of all extinction probabilities $\boldsymbol{q}:= (q_i)_{i \in S}$.

    \begin{proposition}[Theorem 7.1 in \cite{harris1963}]\label{prop:compute_extinct}
        Under Assumptions \ref{infinite_matrix} and \ref{R-positive}, $\boldsymbol{q}$ satisfies the equation
\begin{equation*}
\boldsymbol{q} = \boldsymbol{f}(\boldsymbol{q})
\end{equation*}
where $\boldsymbol{f}(\boldsymbol{s})=(f^i(\boldsymbol{s}))_{i \in S}$.
    \end{proposition}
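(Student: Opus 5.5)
We prove $\boldsymbol{q}=\boldsymbol{f}(\boldsymbol{q})$ by a first-step (one-generation) decomposition, using the branching property in the form \eqref{eq:Zdecomposition} with $t=1$. Fix a founder type $i\in S$ and let $E^i$ be the event of eventual extinction of $\boldsymbol{Z}^i$, so that $q_i=\mathbb{P}(E^i)$.

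The key observation is that extinction of the whole process is the same event as extinction of every subtree rooted at generation one. By \eqref{eq:Zdecomposition} with $t=1$,
\[
|\boldsymbol{Z}^i_T| = \sum_{j\in S}\sum_{s=1}^{Z_1^{i,j}} |\boldsymbol{Z}^j_{1,T-1,s}| .
\]
Since all summands are non-negative, $|\boldsymbol{Z}^i_T|=0$ for some $T$ (equivalently, for all large $T$) if and only if every subprocess $\boldsymbol{Z}^j_{1,\cdot,s}$ with $s\le Z_1^{i,j}$ eventually dies out.

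When $d=\infty$ one must check that $|\boldsymbol{Z}^i_1|<\infty$ almost surely, so that only finitely many subprocesses are involved. This holds because $\mathbb{E}|\boldsymbol{Z}^i_1|=\sum_j m_{ij}$, which I would argue is finite under Assumptions~\ref{infinite_matrix} and \ref{R-positive}. The argument is that $\sum_j m_{ij}u_j=u_i/R<\infty$ with $u_j>0$. Strictly speaking this does not bound $\sum_j m_{ij}$ unless $\inf_j u_j>0$. The safer route is to use that $\boldsymbol{\xi}^i$ takes values in $\mathbb{N}_0^d$ with finitely many nonzero entries, which is implicit in the model since $|\boldsymbol{Z}_t|$ is treated as finite. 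I expect this technical point to be the only obstacle, and I would settle it by appealing to the standing convention that each individual has finitely many offspring.

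Next we condition on the first generation. Given $\boldsymbol{Z}^i_1=\boldsymbol{w}$, the subprocesses $\boldsymbol{Z}^j_{1,\cdot,s}$ are independent, and each has the law of $\boldsymbol{Z}^j$. Hence
\[
\mathbb{P}(E^i\mid \boldsymbol{Z}^i_1=\boldsymbol{w})=\prod_{j\in S} q_j^{w_j},
\]
where the product is finite because only finitely many $w_j$ are nonzero, and the convention $0^0=1$ is used.

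Finally, averaging over the law $p_i$ gives
\[
q_i=\sum_{\boldsymbol{w}} p_i(\boldsymbol{w})\prod_{j} q_j^{w_j}=f^i(\boldsymbol{q}),
\]
with $\boldsymbol{q}\in[0,1]^d$ lying in the domain of $f^i$. Since $i\in S$ was arbitrary, this gives $\boldsymbol{q}=\boldsymbol{f}(\boldsymbol{q})$.

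Note that supercriticality and irreducibility play no role in the identity itself. They matter only for the companion facts, not asked here, that $\boldsymbol{q}$ is the minimal fixed point and that $\boldsymbol{q}\neq\boldsymbol{1}$.
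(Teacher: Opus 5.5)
Your proof is correct, but it follows a different route from the one behind the citation. The paper proves nothing here beyond citing Harris. The argument it relies on, which it reproduces in the appendix proof of \eqref{eq:fixed_point}, is analytic. The events $\{|\boldsymbol{Z}_n|=0\}$ increase to eventual extinction, so $\boldsymbol{q}=\lim_{n\to\infty}\boldsymbol{f}_n(\boldsymbol{0})$. Letting $n\to\infty$ in $\boldsymbol{f}_{n+1}(\boldsymbol{0})=\boldsymbol{f}(\boldsymbol{f}_n(\boldsymbol{0}))$ and moving the limit inside $\boldsymbol{f}$ by continuity then gives $\boldsymbol{q}=\boldsymbol{f}(\boldsymbol{q})$.

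You instead condition on $\boldsymbol{Z}_1$ and use independence of the first-generation subtrees. This avoids any limit interchange and shows plainly that only the branching property is used, so your closing remark that supercriticality and irreducibility are irrelevant to the identity is right. The analytic route has the advantage of staying within the generating-function iteration the paper uses afterwards, e.g.\ for $\boldsymbol{q}=\boldsymbol{f}_t(\boldsymbol{q})$. Your argument yields that identity equally easily by conditioning on $\boldsymbol{Z}_t$ instead of $\boldsymbol{Z}_1$.

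Both routes rest on the hidden hypothesis you isolated: every offspring vector has finitely many nonzero entries. In your route it makes ``$\boldsymbol{Z}^i$ dies out'' equivalent to ``every first-generation subtree dies out''. With infinitely many subtrees, each could die while their extinction times are unbounded, so this is not a mere technicality. In the analytic route the same hypothesis justifies passing the limit through each finite product $\prod_j s_j^{w_j}$ termwise.

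You are right that $R\sum_j m_{ij}u_j=u_i$ does not yield finiteness under Assumptions~\ref{infinite_matrix} and~\ref{R-positive} alone. It would under Assumption~\ref{ass:E_W}, via $\sum_j m_{ij}\le u_i/(R\inf_j u_j)$, but that assumption is not available here. So drop that attempt from the write-up and simply state the finite-offspring convention at the outset. The paper implicitly adopts it by treating $|\boldsymbol{Z}_t|$ as an a.s.\ finite integer throughout, e.g.\ in the sum over $v$ in Lemma~\ref{generating} and in the definition of $f^{i_0}$ as a sum of finite products over atoms $\boldsymbol{w}$.
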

    \begin{remark}
        Note that Proposition \ref{prop:compute_extinct} implies that for any $t \in \mathbb{N}_+$
        \begin{equation*}
            \boldsymbol{q} = \boldsymbol{f}_t(\boldsymbol{q}),
        \end{equation*}
        where $\boldsymbol{f}_t(\boldsymbol{s})=(f^i_t(\boldsymbol{s}))_{i \in S}$.
    \end{remark}

We end this section with a brief example illustrating these concepts.   
\begin{example}\label{deterministic2}
    Consider a branching process with $S=\mathbb{N}_+$ where for $p \in (1/2, 1)$
    \begin{equation*}
        p_i(2\boldsymbol{e}_{i+1}) = 1-p, \qquad p_i(2\boldsymbol{e}_{i-1}) = p, \quad i = 2,3,\dots,
    \end{equation*}
    and for $i=1$, $ p_1(2\boldsymbol{e}_2) = 1$. The process satisfies Assumptions \ref{infinite_matrix} and \ref{R-positive}. The radius of convergence of this system is $R=\tfrac{1}{2}$ (\ref{radius_convergence}). This aligns with the intuition that the system is experiencing a deterministic growth factor of two in each generation.

The $R$-invariant measure $\boldsymbol{\nu}$ of $\boldsymbol{M}$ satisfies 
\begin{equation*}
    \nu_1 = p\nu_2, \quad \nu_1 + p \nu_3 = \nu_2,
\end{equation*}
as well as the recurrence relation
\begin{equation*}
    (1-p)\nu_{i-1} + p \nu_{i+1} = \nu_i, \qquad i=3,4,\dots
\end{equation*}
with an appropriate choice of $\nu_1$ to satisfy the normalisation condition. 
This allows us to arrive at the formula
\begin{equation*}
    \nu_i =\left(\frac{1-p}{p}\right)^{i-2}\frac{1}{p} \nu_1, \qquad i=2,3,\dots.
\end{equation*}
In addition, solving for the invariant vector of $\boldsymbol{M}$ gives 
\begin{equation}
p u_{i-1}+ (1-p)u_{i+1} = u_i, \qquad i=2,3,\dots
\end{equation}
with $u_1=u_2$. Consequently, we can say that all elements of $\boldsymbol{u}$ are equal, which under the normalisation conditions implies that $\boldsymbol{u}=\boldsymbol{1}$. 
\end{example}

\section{Main Results}\label{main_results}
The main results of this paper characterise the distribution of the time of the MRCA of a collection of individuals drawn from the population at time $T$ in the limit $T \to \infty$. Consider sampling $k \ge 2$ individuals uniformly without replacement from generation $T$. Denote the sampled indices by $\boldsymbol{x}=\{x_i\}_{i=1}^k$, $x_i \in [|\boldsymbol{Z}_T^{i_0}|]$, with all elements of $\boldsymbol{x}$ distinct. Consider the random variable $X_{T, k}$ describing the generation of the MRCA of all individuals in $\boldsymbol{x}$. An example of the behaviour of $X_{T, k}$ for $k=2$ is illustrated in Figure \ref{fig:branching}. 
    \begin{figure}
\centering
    \begin{tikzpicture}
% Line
    \draw[thick,->] (0,0) -- (10,0);
    % Marks
    \draw[thin] (0, -0.2) -- (0, 0.2);
        \draw[thin] (1, -0.2) -- (1, 0.2);
    \draw[thin] (8, -0.2) -- (8, 0.2);
    \draw[thin] (4, -0.2) -- (4, 0.2);
    % Text
    \draw (0,-0.5) node {$0$};
        \draw (1,-0.5) node {$1$};
    \draw (4, -0.5)  node {$t$};
    \draw (8, -0.5)  node {$T$};

    \draw [dashed, opacity=0.7] (0,-0.1) -- (0,4);
        \draw [dashed, opacity=0.7] (1,-0.1) -- (1,4);
            \draw [dashed, opacity=0.7] (4,-0.1) -- (4,4);
                \draw [dashed, opacity=0.7] (8,-0.1) -- (8,4);
    % Gen 0
    \node[shape=circle, fill=blue, text=blue](one1) at (0,2) {o};
    % Gen 1
    \node[shape=rectangle, fill=red, text=red](two1) at (1,2.5) {O};
    \node[shape=circle, fill=blue, text=blue](two2) at (1,1.5) {o};
    % Gen t
    \node[shape=rectangle, fill=red, text=red](three1) at (4,3) {O};
    \node[shape=circle, fill=blue, text=blue](three2) at (4,2) {o};
    \node[shape=circle, fill=blue, text=blue](three3) at (4,1) {o};
    % Gen t+1
    \node[shape=rectangle, fill=red, text=red](four1) at (6,3.5) {O};
    \node[shape=circle, fill=blue, text=blue](four2) at (6,2.5) {o};
    \node[shape=rectangle, fill=red, text=red](four3) at (6,1.5) {O};
    \node[shape=circle, fill=blue, text=blue](four4) at (6,0.5) {o};
    % Gen T
    \node[shape=rectangle, fill=red, text=red](five1) at (8,3.5) {O};
    \node[shape=circle, fill=blue, text=blue](five2) at (8,2.75) {o};
    \node[shape=rectangle, fill=red, text=red](five3) at (8,2) {O};
    \node[shape=rectangle, fill=red, text=red](five4) at (8,1.25) {O};
    \node[shape=circle, fill=blue, text=blue](five5) at (8,0.5) {o};
    % Arrows
    % Gen 0
    \draw[-] (one1) -- (two1) ;
    \draw[-] (one1) -- (two2) ;
    % Gen t
    \draw[-] (three1) -- (four1) ;
    \draw[-] (three1) -- (four2) ;
    \draw[-] (three2) -- (four3) ;
    \draw[-] (three3) -- (four4) ;
    % Gen T
    \draw[-] (four1) -- (five1) ;
    \draw[-] (four1) -- (five2) ;
    \draw[-] (four2) -- (five3) ;
   \draw[-, purple] (four4) -- (five4) ;
    \draw[-, purple] (four4) -- (five5) ;
    
    \node at (2.5, 2) {\dots};
    % highlights for genealogy
    \draw[purple, dashed] (7.5,0.1) rectangle (8.5,1.6);
\end{tikzpicture}
\vspace{0.2cm}
\caption{An example realisation of a branching process evolving from generation $0$ to generation $T$ where for the bottom two individuals at time $T$ $X_{T, 2} > t$, whereas for the top and bottom individuals $X_{T, 2} < t$.}
\label{fig:branching}
\end{figure}
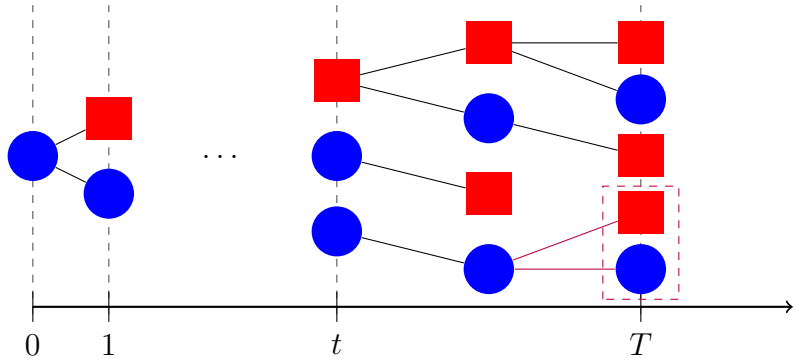

Now, consider the probability $\mathbb{P}(X_{T,k} < t \bigm| |\boldsymbol{Z}_T|\ge k)$ that $k$ individuals sampled at generation $T$ have as their MRCA an individual from a generation older than $t \in (0, T]$ conditioned on the population size at generation $T$ being at least $k$. We can express the asymptotic behaviour of this probability as $T\to\infty$ in terms of the expected value of an expression dependent on the limit of the normalised population size. To state the result formally, we introduce some further notation. Let $\{|\boldsymbol{Z}_{\infty}| > 0\} = \cap_{T\in\mathbb{N}_+} \{|\boldsymbol{Z}_T| > 0\}$ be the event that extinction does not occur at any time $T \in \mathbb{N}_0$, and let $W^{(i)}_{t,s}:= \lim_{T\rightarrow \infty} R^{t-T}|\boldsymbol{Z}^i_{t, T-t, s}| $ denote the limiting rescaled full population size of a process started by an ancestor of type $i$ with index $s$ in generation $t$ (recall \eqref{eq:Zdecomposition}). The law of $W^{(i)}_{t,s}$ does not depend upon $s$ or $t$. 

Ref.\ \cite{Hong2015} considers a process with a finite number of types where individuals of all types produce at least one offspring, i.e.\ the probability of extinction is $0$ regardless of the type of the root ancestor. 
\begin{theorem}[Theorem 2.1 in \cite{Hong2015}]\label{thm_hong}
Under Assumption \ref{infinite_matrix}, for a process with $\boldsymbol{q} = \boldsymbol{0}$ and $d < \infty$, it holds that
    \begin{equation*}
         \lim_{T \rightarrow \infty}\mathbb{P}(X_{T,k} < t) % \label{lim_prob}
    = 1 - \mathbb{E}\left(\frac{\sum_{i=1}^d\sum_{s=1}^{\z} (W^{(i)}_{t,s})^k }
    % denominator
    {(\sum_{i=1}^d\sum_{s=1}^{\z}W^{(i)}_{t,s})^k}  \right).
    \end{equation*}
\end{theorem}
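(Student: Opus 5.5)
We condition on $\mathcal{F}_t$, the $\sigma$-algebra generated by the first $t$ generations. The event $\{X_{T,k} \ge t\}$ occurs exactly when all $k$ sampled individuals descend from the same individual $(i,s)$ of generation $t$.

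The first step is to compute this probability exactly for finite $T$. By the decomposition \eqref{eq:Zdecomposition}, write $N_{i,s} := |\boldsymbol{Z}^i_{t,T-t,s}|$ and $N := |\boldsymbol{Z}_T| = \sum_{i,s} N_{i,s}$. Sampling is uniform without replacement, so
\begin{equation*}
\mathbb{P}(X_{T,k} \ge t \mid \mathcal{F}_t, (N_{i,s})) = \frac{\sum_{i=1}^d\sum_{s=1}^{Z_t^{i}} (N_{i,s})_k}{(N)_k}.
\end{equation*}
Because $\boldsymbol{q}=\boldsymbol{0}$, every individual has at least one child, so $N \ge |\boldsymbol{Z}_t| \ge 1$. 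For $T$ large the event $N\ge k$ holds almost surely, since the population is non-decreasing and grows to infinity in the supercritical irreducible case. Hence
\begin{equation*}
\mathbb{P}(X_{T,k} < t) = 1 - \mathbb{E}\left[\frac{\sum_{i,s} (N_{i,s})_k}{(N)_k}\right].
\end{equation*}

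The second step is to multiply the numerator and denominator by $R^{k(T-t)}$. By Theorem \ref{moy}, applied to each independent subtree with law $\boldsymbol{Z}^i_{T-t}$, we have $R^{T-t}N_{i,s}\to W^{(i)}_{t,s}$ in $L^2$, and hence in probability. Since $Z_t^i$ is a.s. finite (it has finite mean under Assumption \ref{infinite_matrix}) and $d<\infty$, there are a.s. finitely many summands. We also have $R^{k(T-t)}(N_{i,s})_k - (R^{T-t}N_{i,s})^k \to 0$, because $R^{k(T-t)}$ times a polynomial in $N_{i,s}$ of degree $k-1$ vanishes. By the continuous mapping theorem, conditionally on $\mathcal{F}_t$, the ratio converges in probability to
\begin{equation*}
\frac{\sum_{i,s}(W^{(i)}_{t,s})^k}{(\sum_{i,s}W^{(i)}_{t,s})^k}.
\end{equation*}
For this we need the denominator limit to be a.s. positive. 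With $\boldsymbol{q}=\boldsymbol{0}$ we need $\mathbb{P}(W^{(i)}=0)=0$. I would establish this by noting that $\mathbb{P}(W^{(i)}=0)$ solves the fixed-point equation $\boldsymbol{x}=\boldsymbol{f}(\boldsymbol{x})$, just as $\boldsymbol{q}$ does. Since $\mathbb{E}W^{(i)}=u_i>0$, this vector differs from $\boldsymbol{1}$, so by uniqueness of the non-trivial fixed point in the irreducible finite-type case it equals $\boldsymbol{q}=\boldsymbol{0}$. Alternatively, the ratio is a continuous bounded function on the set where $\sum W>0$, and that set has full probability.

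The third step is to pass the limit through the expectation. The ratio $\sum_{i,s}(N_{i,s})_k/(N)_k$ is bounded in $[0,1]$: it is a probability, and indeed $\sum_j (a_j)_k \le (\sum_j a_j)_k$. Bounded convergence therefore gives the stated formula. The main obstacle is the a.s. positivity of $W^{(i)}$, i.e.\ identifying $\{W^{(i)}=0\}$ with extinction. I would try the fixed-point argument first, using $L^2$ convergence from Theorem \ref{moy} to get $\mathbb{E}W^{(i)}>0$. I would then use the finite-type uniqueness theorem for solutions of $\boldsymbol{s}=\boldsymbol{f}(\boldsymbol{s})$ in $[0,1]^d\setminus\{\boldsymbol{1}\}$ (Harris, Ch.~2), taking care with the possibly singular case. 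That case is ruled out because supercriticality and irreducibility prevent the process from being singular with growth $1/R>1$ being deterministic unless $W$ is constant, in which case $W>0$ trivially.
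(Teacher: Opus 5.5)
Your proposal is correct and follows essentially the paper's own route. The paper does not prove this statement directly but cites Hong; its proof of Theorem~\ref{exact_thm}, which it says follows Hong's structure, reduces to your argument when $\boldsymbol{q}=\boldsymbol{0}$: falling factorials over the generation-$t$ subtrees, rescaling by $R^{T-t}$, Theorem~\ref{moy} plus continuous mapping, and an exchange of limit and expectation. Your bounded-convergence step replaces the paper's modified conditional dominated convergence theorem, and your fixed-point argument makes explicit the a.s.\ positivity of $\sum_{i,s}W^{(i)}_{t,s}$, which the paper uses without comment.

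Three small repairs are needed. First, for fixed $T$ the event $\{|\boldsymbol{Z}_T|\ge k\}$ need not have probability one (for instance if every type has exactly one child with positive probability), so you should claim only $\mathbb{P}(|\boldsymbol{Z}_T|\ge k)\to 1$, which is all the limit requires. Second, a singular process has stochastic $\boldsymbol{M}$ and hence $R=1$, so supercriticality simply excludes that case. Third, Harris's uniqueness theorem assumes positive regularity, which Assumption~\ref{infinite_matrix} does not give. It is cleaner to argue directly: with $x_i:=\mathbb{P}(W^{(i)}=0)$ and $\max_j x_j<1$, one has $x_i=f^i_T(\boldsymbol{x})\le\mathbb{E}\bigl((\max_j x_j)^{|\boldsymbol{Z}^i_T|}\bigr)\to 0$, because $|\boldsymbol{Z}^i_T|\to\infty$ a.s.
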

Our Theorem \ref{exact_thm} extends Theorem \ref{thm_hong} to a process with a non-zero probability of extinction and a possibly countably infinite number of types. This extension is non-trivial owing to more delicate convergence conditions when the type space is countably infinite; this is reflected in the slightly more extensive assumptions needed in this case. Furthermore, working with a process that can experience extinction requires additional technical arguments. 
\begin{theorem} \label{exact_thm}
 Under Assumptions \ref{infinite_matrix} and \ref{R-positive}, we have
\begin{equation}
    \lim_{T \rightarrow \infty}\mathbb{P}(X_{T,k} < t\bigm| |\boldsymbol{Z}_T| \ge k ) % \label{lim_prob}
    = 1 - \mathbb{E}\left(\frac{\sum_{i \in S}\sum_{s=1}^{\z} (W^{(i)}_{t,s})^k }
    % denominator
    {(\sum_{i \in S}\sum_{s=1}^{\z}W^{(i)}_{t, s})^k}\bigm| |\boldsymbol{Z}_{\infty}| > 0  \right) .\label{equal_expression}
\end{equation}
\end{theorem}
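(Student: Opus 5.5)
Fix $t$ and condition on $\mathcal{F}_t$, the $\sigma$-field of the first $t$ generations. Write $N_t := \sum_{i\in S} Z_t^{i_0,i}$ and use the decomposition \eqref{eq:Zdecomposition}. The event $\{X_{T,k}\ge t\}$ occurs exactly when all $k$ sampled individuals descend from the same individual $(i,s)$ in generation $t$. Since sampling is uniform without replacement, we get
\begin{equation*}
\mathbb{P}\bigl(X_{T,k}\ge t \bigm| \mathcal{F}_T\bigr) = \frac{\sum_{i\in S}\sum_{s=1}^{Z_t^{i_0,i}} \bigl(|\boldsymbol{Z}^i_{t,T-t,s}|\bigr)_k}{\bigl(|\boldsymbol{Z}_T|\bigr)_k}
\end{equation*}
on $\{|\boldsymbol{Z}_T|\ge k\}$. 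Call this ratio $\rho_T$. It lies in $[0,1]$, which is what will license bounded convergence. The plan is to show that $\rho_T\mathbbm{1}\{|\boldsymbol{Z}_T|\ge k\}$ converges almost surely, or at least in probability, to
\begin{equation*}
\rho := \frac{\sum_{i,s}(W^{(i)}_{t,s})^k}{\bigl(\sum_{i,s}W^{(i)}_{t,s}\bigr)^k}\,\mathbbm{1}\{|\boldsymbol{Z}_\infty|>0\},
\end{equation*}
and that $\mathbb{P}(|\boldsymbol{Z}_T|\ge k)\to \mathbb{P}(|\boldsymbol{Z}_\infty|>0)=1-q_{i_0}$. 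Dividing the two limits then gives \eqref{equal_expression}.

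\textbf{Step 1: convergence of the ratio.} Multiply numerator and denominator of $\rho_T$ by $R^{k(T-t)}$. Each subtree satisfies $R^{T-t}|\boldsymbol{Z}^i_{t,T-t,s}|\to W^{(i)}_{t,s}$ in $L^2$ by Theorem \ref{moy}; I would pass to a subsequence to get almost sure convergence, which suffices for limits of expectations. On $\{|\boldsymbol{Z}_T|\to\infty\}$ we have $R^{k(T-t)}(x)_k \sim (R^{T-t}x)^k$ for any $x\to\infty$. Given $\mathcal{F}_t$ there are only $N_t<\infty$ subtrees, so the sum over $(i,s)$ is a.s.\ finite, even when $d=\infty$. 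A subtree that goes extinct contributes $0$ both before and after the limit.

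The denominator limit $\sum_{i,s}W^{(i)}_{t,s}=R^{-t}\lim R^T|\boldsymbol{Z}_T|$ is positive on $\{|\boldsymbol{Z}_\infty|>0\}$. This requires the standard fact that $\{W^{(i_0)}>0\}=\{|\boldsymbol{Z}_\infty|>0\}$ a.s. I would prove it via the fixed-point argument. Let $\tilde q_i=\mathbb{P}(W^{(i)}=0)$. Then $\tilde{\boldsymbol q}=\boldsymbol f(\tilde{\boldsymbol q})$ with $\tilde{\boldsymbol q}\ge \boldsymbol q$, and $\tilde q_i<1$ because $\mathbb{E}W^{(i)}=u_i>0$. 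Irreducibility together with supercriticality forces $\tilde{\boldsymbol q}=\boldsymbol q$; for instance, on non-extinction the population in some type grows without bound, so infinitely many independent copies each have $W>0$ with probability bounded below. Hence $\rho_T\to\rho$ a.s.\ (along the subsequence) on non-extinction.

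\textbf{Step 2: the denominator and extinction.} On extinction $\rho_T\mathbbm{1}\{|\boldsymbol{Z}_T|\ge k\}$ is eventually $0$. Moreover $\{|\boldsymbol{Z}_T|\ge k\}$ decreases, up to null sets, to $\{|\boldsymbol{Z}_\infty|>0\}$, because on survival $|\boldsymbol{Z}_T|\to\infty$ (since $R^T|\boldsymbol{Z}_T|\to W>0$). Bounded convergence then gives
\begin{equation*}
\mathbb{E}\bigl(\rho_T\mathbbm{1}\{|\boldsymbol{Z}_T|\ge k\}\bigr)\to\mathbb{E}(\rho).
\end{equation*}
The full-sequence statement follows because every subsequence has a further subsequence with the same limit. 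Dividing by $\mathbb{P}(|\boldsymbol{Z}_T|\ge k)\to 1-q_{i_0}$ and taking complements yields the result.

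\textbf{Main obstacle.} The main difficulty is the identification $\{W>0\}=\{\text{survival}\}$ in the countably infinite type setting. Uniqueness of the fixed point of $\boldsymbol f$ below $\boldsymbol 1$ can fail there, so I expect to need the additional assumptions alluded to after Theorem \ref{thm_hong}, perhaps some uniformity such as $\inf_i$ of the survival probability. What I would try first is a direct argument. On survival, the $L^2$ convergence combined with a martingale-type argument on $R^T\boldsymbol{Z}_T\cdot\boldsymbol u$ gives a.s.\ convergence. A Lévy zero–one argument then shows that $\mathbb{P}(W=0\mid\mathcal{F}_n)\to\mathbbm{1}\{W=0\}$. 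This conditional probability equals $\prod_i \tilde q_i^{Z_n^{i}}$, and it tends to $0$ whenever $\sum_i Z^i_n\to\infty$, provided $\sup_i\tilde q_i<1$.
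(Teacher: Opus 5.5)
Your route is the paper's. It too writes $\mathbb{P}(X_{T,k}\ge t\mid|\boldsymbol{Z}_T|\ge k)$ as the expected falling-factorial ratio, rescales by $R^{k(T-t)}$, and uses Theorem~\ref{moy} with the continuous mapping theorem. In its appendix on a modified dominated convergence theorem, it passes to the limit by dividing $\mathbb{E}(\xi_T\mathbbm{1}\{|\boldsymbol{Z}_T|\ge k\})$ by $\mathbb{P}(|\boldsymbol{Z}_T|\ge k)$.

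The obstacle you flag is a genuine gap, and the paper does not close it either. Its continuous-mapping step silently needs $\sum_{i,s}W^{(i)}_{t,s}>0$ a.s.\ on $\{|\boldsymbol{Z}_\infty|>0\}$. Its appendix gets $|\boldsymbol{Z}_T|\to\infty$ on survival from $\mathbb{P}(\boldsymbol{Z}_T=\boldsymbol{z}\text{ i.o.})=0$, which yields this only when $d<\infty$; otherwise there are infinitely many states of each total size.

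For $d<\infty$ your L\'evy argument does close the gap, and more simply than you state it. On survival $|\boldsymbol{Z}_n|\ge1$, so $\mathbb{P}(W^{(i_0)}=0\mid\mathcal{F}_n)=\prod_i\tilde q_i^{Z_n^{i_0,i}}\le\max_i\tilde q_i<1$, since each $\tilde q_i<1$ because $\mathbb{E}W^{(i)}=u_i>0$. The a.s.\ limit of this conditional probability is the indicator $\mathbbm{1}\{W^{(i_0)}=0\}$, which therefore vanishes on survival. Do not appeal to $|\boldsymbol{Z}_n|\to\infty$ here: your Step~2 derives $|\boldsymbol{Z}_T|\to\infty$ from $W>0$, so as written the two steps are circular. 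Also, $\{|\boldsymbol{Z}_T|\ge k\}$ is not monotone in $T$. What you actually need is $\mathbbm{1}\{|\boldsymbol{Z}_T|\ge k\}\to\mathbbm{1}\{|\boldsymbol{Z}_\infty|>0\}$ in probability, and this then follows from $R^T|\boldsymbol{Z}_T|\to W^{(i_0)}$.

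For $d=\infty$, no argument can close the gap under Assumptions~\ref{infinite_matrix} and~\ref{R-positive} alone. Here is a counterexample.
Let a type-$1$ individual have two type-$1$ children and one type-$2$ child. Let a type-$i$ individual, $i\ge2$, have one type-$(i+1)$ child plus, with probability $p_i=2^{-i}$, an extra type-$1$ child.
This process is irreducible with $\boldsymbol{q}=\boldsymbol{0}$. The first-return series $F_{11}(z)=2z+\sum_{i\ge2}p_iz^i$ equals $1$ at some $R<1/2$, so $M_{11}(R)=\infty$. Moreover $\nu_j\propto R^{j-1}$ and $\boldsymbol{u}$ is bounded, so $\boldsymbol{u}\cdot\boldsymbol{\nu}<\infty$.
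Start from type $2$. With probability $\prod_{i\ge2}(1-p_i)>0$ no type-$1$ child is ever born, so $|\boldsymbol{Z}_T|=1$ for all $T$. On this event the process survives and $W^{(2)}=0$. Hence the integrand in \eqref{equal_expression} is $0/0$ on an event of positive probability, and for $k\ge2$ one has $\mathbb{P}(|\boldsymbol{Z}_T|\ge k)\not\to1-q_{2}$.

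So an extra hypothesis is needed for countably many types. Your suggested condition $\sup_i\tilde q_i<1$ suffices for the L\'evy argument above. It holds, for instance, under Assumption~\ref{ass:E_W} together with $\sup_i\mathbb{E}((W^{(i)})^2)<\infty$, because $\mathbb{P}(W^{(i)}>0)\ge u_i^2/\mathbb{E}((W^{(i)})^2)$.
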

The right hand-side of (\ref{equal_expression}) is difficult to estimate for large but fixed values of $t$, prompting an analysis of the rate of decay of this quantity to 0 as a function of $t$. To that end, it is convenient to impose further assumptions on $\{W^{(i)}\}_{i \in S}$ and on the extinction probabilities.

\begin{assumption}\label{moments}
    For all $i \in S$, $\mathbb{E}((W^{(i)})^{2k}) < \infty$.
\end{assumption}

When $d < \infty$, Assumption \ref{moments} can be checked more easily by investigating the finiteness of the $2k$th moment of the offspring distribution.

\begin{proposition} \label{prop:W_vs_Z1}
Under Assumptions \ref{infinite_matrix} and \ref{R-positive}, if $d < \infty$ and if for all $i,j \in S$ we have that
\begin{equation}
    \mathbb{E}\left(\left(Z_1^{i,j}\right)^{2k}\right) < \infty \label{eq:finite_moments},
\end{equation}
    then Assumption \ref{moments} holds. %it is true that for all $i \in S$
%    \begin{equation*}
%        \mathbb{E}((W^{(i)})^{2k}) < \infty.
%    \end{equation*}
\end{proposition}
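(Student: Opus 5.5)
The plan is to show that the normalised population $W_T := R^T |\boldsymbol{Z}_T^{i_0}|$ is bounded in $L^{2k}$ uniformly in $T$. Theorem \ref{moy} gives $W_T \to W^{(i_0)}$ in $L^2$, hence in probability. Fatou's lemma along an a.s.\ convergent subsequence then gives $\mathbb{E}((W^{(i_0)})^{2k}) \le \liminf_T \mathbb{E}(W_T^{2k}) < \infty$. Since $d<\infty$, it suffices to bound $\mathbb{E}\bigl((R^T \boldsymbol{v}\cdot \boldsymbol{Z}_T)^{2k}\bigr)$ for one strictly positive vector $\boldsymbol{v}$. We take $\boldsymbol{v}=\boldsymbol{u}$, which is comparable to $\boldsymbol{1}$ because $\min_i u_i>0$. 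This gives the martingale $Y_T := R^T \boldsymbol{u}\cdot\boldsymbol{Z}_T$, using $R\boldsymbol{M}\boldsymbol{u}=\boldsymbol{u}$. As $d<\infty$, the $R$-positivity in Assumption \ref{R-positive} is just Perron--Frobenius with $1/R$ the spectral radius.

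\textbf{Step 1: moments of order $n$ for all $n \le 2k$.} We prove that $\sup_T \mathbb{E}(Y_T^n)<\infty$ by induction on $n$. The case $n=1$ is trivial and $n=2$ is given by Theorem \ref{moy}. Write the increment
\[
Y_{T+1}-Y_T = R^{T}\sum_{i}\sum_{j=1}^{Z_T^{i}} \eta^i_{T,j}, \qquad \eta^i_{T,j} := R\,\boldsymbol{u}\cdot\boldsymbol{\xi}^i_{T,j} - u_i .
\]
The $\eta$'s are independent and centred given $\mathcal{F}_T$. By \eqref{eq:finite_moments} and $d<\infty$ they have uniformly bounded $2k$-th moments; Minkowski's inequality over the finitely many coordinates $j$ bounds $\mathbb{E}|\boldsymbol{u}\cdot \boldsymbol{\xi}^i|^{2k}$.

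We then apply the Burkholder (or Marcinkiewicz--Zygmund/Rosenthal) inequality conditionally on $\mathcal{F}_T$, with $N_T := |\boldsymbol{Z}_T|$. For $p\ge 2$ this gives
\[
\mathbb{E}\bigl(|Y_{T+1}-Y_T|^{p}\mid\mathcal{F}_T\bigr) \le C_p R^{pT}\bigl( N_T^{p/2} + N_T \bigr)\le 2C_p R^{pT} N_T^{p/2}.
\]
Since $N_T \le c\,R^{-T} Y_T$, we obtain
\[
\| Y_{T+1}-Y_T\|_{p} \le C R^{T/2}\, \|Y_T\|_{p/2}^{1/2}.
\]
By the induction hypothesis, $\|Y_T\|_{p/2}$ is bounded uniformly in $T$ whenever $p/2 \le 2k$ has already been handled. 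The case $p=2k$ needs $p/2=k<2k$, which is covered.

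By Minkowski's inequality, $\|Y_T\|_p \le \|Y_0\|_p + \sum_t \|Y_{t+1}-Y_t\|_p$. The sum is bounded by $C\sum_t R^{t/2}<\infty$ because $R<1$. This closes the induction; we can induct over $p=2,4,\dots,2k$, or over all $p\le 2k$ using $N_T\ge 1$ on non-extinction (and $N_T=0$ otherwise).

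\textbf{Step 2: conclude.} Apply Fatou's lemma as described above. This gives $\mathbb{E}((W^{(i_0)})^{2k}) \le \liminf_T \mathbb{E}(W_T^{2k}) \le c' \sup_T \mathbb{E}(Y_T^{2k}) <\infty$ for every $i_0$. Alternatively, $Y_T\to u$-weighted $W$ a.s.\ by the martingale convergence theorem.

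The main obstacle is Step 1: getting the conditional moment inequality with a constant independent of $T$, and handling the bound $N_T^{p/2}+N_T$ correctly when $N_T$ may be $0$ or small. Using $N_T\le N_T^{p/2}$ for integer $N_T$ resolves the latter. The geometric factor $R^{T/2}$ is what makes the sum converge, and it relies on supercriticality.
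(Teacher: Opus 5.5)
Your proof is correct, and its skeleton is the paper's. Both use the martingale $\boldsymbol{u}\cdot\boldsymbol{Z}_T/\lambda^T$, which is your $Y_T$ since $R=1/\lambda$ for $d<\infty$. Both apply a conditional Rosenthal bound to the increment, written as a sum of $|\boldsymbol{Z}_T|$ independent centred family contributions. Both use the resulting $\lambda^{-T/2}$ decay of the increments in $L^p$, and finish with Fatou's lemma.

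You depart from the paper in two ways.

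\textbf{Triangle inequality instead of martingale Rosenthal.} The paper feeds its increment bounds into the martingale form of Rosenthal's inequality for $W_T$, and then needs Minkowski to handle the conditional-variance term. You simply use the triangle inequality $\|Y_T\|_p\le u_{i_0}+\sum_{t<T}\|Y_{t+1}-Y_t\|_p$. The geometric factor $R^{t/2}$ makes this crude bound enough, so only the independent-sum (conditional) Rosenthal inequality is needed.

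\textbf{Explicit induction on the moment order.} This is the more substantive difference. The paper's Lemma \ref{lemma:E_D-unconditional} and step \eqref{eq:after_minkowski} use $\mathbb{E}(|\boldsymbol{Z}_T|^{p/2})\le C\lambda^{Tp/2}$. The only justification given is the Perron--Frobenius decomposition of $\boldsymbol{M}^T$, which controls first moments only. Moreover, the bound is asserted there with exponent $p$. That version is equivalent to $\sup_T\mathbb{E}((\lambda^{-T}|\boldsymbol{Z}_T|)^p)<\infty$, so it is essentially the conclusion itself. In your induction, the order-$p$ step consumes only the order-$p/2$ bound. That is exactly what makes the step legitimate, so your argument is the more self-contained of the two.

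Two small points to tidy.
\begin{itemize}
\item State Lyapunov's inequality $\|Y_T\|_{p/2}\le\|Y_T\|_{p-1}$, so the induction over consecutive integers $p=2,\dots,2k$ is visibly closed. If you step by two instead, $p=6$ needs $\|Y_T\|_3\le\|Y_T\|_4$.
\item The base case does not need Theorem \ref{moy}. Your estimate at $p=2$ gives $\|Y_{T+1}-Y_T\|_2\le CR^{T/2}u_{i_0}^{1/2}$ from $\mathbb{E}(Y_T)=u_{i_0}$ alone. Theorem \ref{moy} is then used only to identify the limit in the Fatou step.
\end{itemize}
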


The proof of Proposition \ref{prop:W_vs_Z1} can be found in \ref{W_vs_Z1}. The relationship between the moments of the offspring distribution and the limit of the rescaled population size is well known in the single-type case \cite{bingham}, but, to the best of our knowledge, has not previously been proved explicitly in the multi-type case.

Condition \ref{eq:finite_moments} is satisfied by a lot of standard offspring distributions. Any distribution with a bounded support satisfies the finiteness of moments assumption. For distributions with unbounded support, consider a system where each type of parent $i \in S$ gives birth to a number of children of type $j \in S$ independently of the number of children of other types. In this setting, many commonly chosen offspring distributions---such as linear-fractional, geometric, binomial, and Poisson (refer to \cite{branching_biology} and references therein)---possess finite moments.

 All results can be significantly simplified if we assume that $\sup_{i \in S} q_i < 1$. In both the finite-type and countably infinite case, $q_i < 1$ for all $i \in S$ if $\boldsymbol{M}$ is positive regular and supercritical \cite{hautphenne}. We want to avoid the situation where some $q_i$ are arbitrarily close to 1, i.e.\ that $\sup_{i \in S} q_i = 1$.
\begin{assumption}\label{ass:extinct}
    $\sup_{i \in S} q_i < 1$.
\end{assumption}
Similarly, we want to avoid the situation where, even conditioned on non-extinction, populations can grow subexponentially, i.e.\ abnormally compared to the expected exponential growth. 
\begin{assumption}\label{ass:E_W}
$ \inf_{i \in S} u_i =\inf_{i \in S} \mathbb{E}(W^{(i)}) > 0$.
\end{assumption}
\begin{remark}
    Note that $\inf_{i \in S} \mathbb{E}(W^{(i)}) > 0$ implies that, for any $j \in \mathbb{N}_+$, $\inf_{i \in S} \mathbb{E}((W^{(i)})^j)>0$.
\end{remark}
\begin{remark}
    For a supercritical, irreducible branching process with a \emph{finite} number of types satisfying Assumption \ref{moments}, Assumptions \ref{ass:extinct} and \ref{ass:E_W} are redundant since for all $i \in S$, $q_i > 0$ and $\mathbb{E}(W^{(i)}) > 0$. 
\end{remark}
Expression (\ref{equal_expression}) can be bounded in terms of the harmonic moments of $|\boldsymbol{Z}_t|$ which provide a route to tractable numerical approximations. 
\begin{theorem} \label{harmonic_thm}
Under Assumptions \assumptions, for any $\epsilon \in (0, \inf_{i \in S, j \in \{1,k\}}{\mathbb{E}[(W^{(i)})^j]})$, it holds that
    \begin{align*}
&1 -(C_1^{\epsilon}+C_3^{\epsilon})\mathbb{E}\left(\frac{1}
    % Denominator
    {|\boldsymbol{Z}_t|}\bigm| |\boldsymbol{Z}_{\infty}| > 0\right)
\le \lim_{T \rightarrow \infty}\mathbb{P}(X_{T,k} < t| |\boldsymbol{Z}_T| \ge k ) \\
& \le
    1 - C_2^{\epsilon}\left(\mathbb{E}\left(\frac{1}{|\boldsymbol{Z}_t|^{k-1}}\bigm||\boldsymbol{Z}_{\infty}| > 0\right) - C_3^{\epsilon} \mathbb{E}\left(\frac{1}{|\boldsymbol{Z}_t|^{k}}\bigm||\boldsymbol{Z}_{\infty}| > 0\right)\right),
\end{align*}
where $C_1^{\epsilon} := \frac{\sup_{i \in S}\mathbb{E}((W^{(i)})^k)+\epsilon}
    % Denominator
    {(\inf_{i \in S}\mathbb{E}(W^{(i)})-\epsilon)^k}$, $C_2^{\epsilon} := \frac{\inf_{i \in S}\mathbb{E}((W^{(i)})^k)-\epsilon}{(\sup_{i \in S}\mathbb{E}(W^{(i)})+\epsilon)^k}$, and $C_3^{\epsilon} := 
    % unlikely event bit
    \sum_{j \in \{1, k\}}\frac{\sup_{i \in S}{\Var((W^{(i)})^j)}}{ \epsilon^2(1 - \sup_{i \in S}q_i)}$.
\end{theorem}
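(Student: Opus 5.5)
Starting from Theorem \ref{exact_thm}, write $N := |\boldsymbol{Z}_t|$ and let $A := \sum_{i\in S}\sum_{s=1}^{Z_t^{i_0,i}} (W^{(i)}_{t,s})^k$ and $B := \sum_{i\in S}\sum_{s=1}^{Z_t^{i_0,i}} W^{(i)}_{t,s}$. Theorem \ref{exact_thm} expresses the limit as $1-\mathbb{E}(A/B^k \mid |\boldsymbol{Z}_\infty|>0)$, so we need two-sided bounds on $\mathbb{E}(A/B^k\mid\cdot)$.

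\textbf{Conditioning and the good event.} Condition on $\mathcal{F}_t$, the information up to generation $t$. Given $\mathcal{F}_t$, the variables $W^{(i)}_{t,s}$ are independent, with $W^{(i)}_{t,s}$ distributed as $W^{(i)}$. On the event $\{|\boldsymbol{Z}_\infty|>0\}$ we have $N\ge1$. Define the event $\A$ on which both averages are close to their conditional means:
\[
\Bigl|\tfrac1N A-\tfrac1N\textstyle\sum (\mathbb{E}(W^{(i)})^k)\Bigr|<\epsilon,
\qquad
\Bigl|\tfrac1N B-\tfrac1N\textstyle\sum \mathbb{E}W^{(i)}\Bigr|<\epsilon,
\]
where the sums run over the $N$ individuals alive at generation $t$. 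By Chebyshev and conditional independence, for each $j\in\{1,k\}$,
\[
\mathbb{P}\bigl(\text{the $j$-th deviation} \ge \epsilon \bigm| \mathcal{F}_t\bigr)\le \frac{\sup_i \Var((W^{(i)})^j)}{N\epsilon^2},
\]
and these variances are finite by Assumption \ref{moments}. A union bound therefore gives $\mathbb{P}(\A^c\mid\mathcal{F}_t)\le C_3^\epsilon(1-\sup_i q_i)/N$.

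\textbf{Bounds on the good event.} On $\A$, $B\ge N(\inf_i u_i-\epsilon)$, which is positive by Assumption \ref{ass:E_W} and the choice of $\epsilon$. We also have $A\le N(\sup_i\mathbb{E}(W^{(i)})^k+\epsilon)$. Hence $A/B^k\le C_1^\epsilon N^{1-k}\le C_1^\epsilon/N$, since $k\ge2$. Symmetrically, on $\A$ we get $A/B^k\ge C_2^\epsilon N^{1-k}$, where positivity of the numerator of $C_2^\epsilon$ uses $\epsilon<\inf_i\mathbb{E}(W^{(i)})^k$.

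\textbf{Bounds on the bad event.} On $\A^c$ we only use the trivial bound $0\le A/B^k\le 1$, valid because $\sum x_s^k\le(\sum x_s)^k$ for nonnegative $x_s$. This gives
\[
\mathbb{E}(A/B^k\mid\mathcal{F}_t)\le C_1^\epsilon/N+\mathbb{P}(\A^c\mid\mathcal{F}_t),
\]
and
\[
\mathbb{E}(A/B^k\mid\mathcal{F}_t)\ge C_2^\epsilon N^{1-k}\bigl(1-\mathbb{P}(\A^c\mid\mathcal{F}_t)\bigr).
\]

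\textbf{Transferring the conditioning (main obstacle).} The difficulty is that the expectation in Theorem \ref{exact_thm} is conditioned on $\{|\boldsymbol{Z}_\infty|>0\}$, which is not $\mathcal{F}_t$-measurable and is not independent of the $W$'s. Indeed, non-extinction equals $\{B>0\}$ almost surely, given the standard fact that $W^{(i)}>0$ exactly on survival. I would first work on the event $\{N\ge1\}$ and bound the bad-event probability under the survival conditioning by
\[
\mathbb{P}(\A^c\mid\mathcal{F}_t, |\boldsymbol{Z}_\infty|>0)\le \frac{\mathbb{P}(\A^c\mid\mathcal{F}_t)}{\mathbb{P}(|\boldsymbol{Z}_\infty|>0\mid\mathcal{F}_t)}.
\]
The denominator is $1-\prod q_i^{Z^{i}_t}\ge 1-\sup_i q_i$ when $N\ge1$, by Assumption \ref{ass:extinct}. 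This is exactly where the factor $(1-\sup_i q_i)^{-1}$ in $C_3^\epsilon$ comes from, and it yields the bound $C_3^\epsilon/N$.

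For the good-event bounds, note that $\A$ forces $B>0$, so $A/B^k$ on $\A$ needs no further adjustment. We must also handle the fact that the event $\{N\ge1\}$ differs from survival. I would express
\[
\mathbb{E}(\,\cdot\mid |\boldsymbol{Z}_\infty|>0)
\]
via the tower property over $\mathcal{F}_t$ restricted to survival, and check that terms with $N$ replaced accordingly give $\mathbb{E}(N^{-j}\mid|\boldsymbol{Z}_\infty|>0)$. If this exact matching fails, I would adjust by bounding $\mathbb{E}(N^{-j}\mathbbm{1}\{N\ge1\})/\mathbb{P}(\text{survival})$ against the conditioned harmonic moment.

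Combining the pieces:
\[
\mathbb{E}(A/B^k\mid\cdot)\le (C_1^\epsilon+C_3^\epsilon)\,\mathbb{E}(N^{-1}\mid\cdot),
\]
and
\[
\mathbb{E}(A/B^k\mid\cdot)\ge C_2^\epsilon\bigl(\mathbb{E}(N^{1-k}\mid\cdot)-C_3^\epsilon\,\mathbb{E}(N^{-k}\mid\cdot)\bigr).
\]
Substituting these into Theorem \ref{exact_thm} gives the stated inequalities.
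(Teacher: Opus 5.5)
Your proposal is correct and follows essentially the same route as the paper. Both arguments use conditional Chebyshev for the averages $A/N$ and $B/N$ given $\boldsymbol{Z}_t$, the trivial bound $A/B^k\le 1$ off the good event, transfer of conditioning by dividing by $\mathbb{P}(|\boldsymbol{Z}_\infty|>0\mid\boldsymbol{Z}_t)=1-\prod_{i}q_i^{Z_t^{i_0,i}}\ge 1-\sup_i q_i$ (the paper's Lemmas~\ref{future_present} and~\ref{chebyshev}), and finally the tower property, with $N^{1-k}\le N^{-1}$ used for the lower bound on the probability. Your hedge about the ``exact matching'' is unnecessary: your bounds on $\mathbb{E}(A/B^k\mid\mathcal{F}_t,|\boldsymbol{Z}_\infty|>0)$ are functions of $N$ alone, so integrating them against the law conditioned on survival gives exactly $\mathbb{E}(N^{-j}\mid|\boldsymbol{Z}_\infty|>0)$, as in the paper.
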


The following lemma concerns the computation of harmonic moments and generalises equation (1) from \cite{athreya2003} to the multi-type setting with conditioning.
\begin{lemma}\label{gamma_lemma}
For any $r \in \mathbb{N}_+$, the $r$th harmonic moment of a supercritical process $|\boldsymbol{Z}_t|$ initiated with a single type $i_0 \in S$ can be computed as
\begin{equation}
 \mathbb{E}\left(\frac{1}{|\boldsymbol{Z}_t|^r} \bigm| |\boldsymbol{Z}_t| > 0\right) = \frac{1}{\Gamma(r)}\int_0^{\infty} u^{r-1} \frac{f_t^{i_0}(e^{-u}\boldsymbol{1}) - f_t^{i_0}(\boldsymbol{0})}{1-f_t^{i_0}(\boldsymbol{0})} du  \label{eq:harmonic_moment_formula}.
\end{equation}
\end{lemma}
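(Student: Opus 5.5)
The plan is to use the Gamma-function identity $\frac{1}{n^r} = \frac{1}{\Gamma(r)}\int_0^\infty u^{r-1}e^{-nu}\,du$, valid for every integer $n \ge 1$ and $r \in \mathbb{N}_+$. Applying it with $n = |\boldsymbol{Z}_t|$ on the event $\{|\boldsymbol{Z}_t| > 0\}$ gives
\begin{equation*}
\mathbb{E}\left(\frac{1}{|\boldsymbol{Z}_t|^r}\mathbbm{1}\{|\boldsymbol{Z}_t|>0\}\right) = \mathbb{E}\left(\frac{1}{\Gamma(r)}\int_0^\infty u^{r-1} e^{-u|\boldsymbol{Z}_t|}\mathbbm{1}\{|\boldsymbol{Z}_t|>0\}\,du\right).
\end{equation*}
The integrand is non-negative, so Tonelli's theorem lets us exchange expectation and integral with no further integrability conditions. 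This holds even when $d=\infty$ and even if the harmonic moment is a priori infinite, since both sides may then equal $+\infty$.

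The next step is to identify the inner expectation through the generating function. Since $e^{-u|\boldsymbol{Z}_t|} = \prod_{j\in S}(e^{-u})^{Z_t^{i_0,j}}$, the definition of $f_t^{i_0}$ gives $\mathbb{E}(e^{-u|\boldsymbol{Z}_t|}) = f_t^{i_0}(e^{-u}\boldsymbol{1})$. The convention $0^0=1$ gives $\mathbb{P}(|\boldsymbol{Z}_t|=0) = f_t^{i_0}(\boldsymbol{0})$. Splitting on the event $\{|\boldsymbol{Z}_t| = 0\}$, on which $e^{-u|\boldsymbol{Z}_t|}=1$, we obtain
\begin{equation*}
\mathbb{E}\left(e^{-u|\boldsymbol{Z}_t|}\mathbbm{1}\{|\boldsymbol{Z}_t|>0\}\right) = f_t^{i_0}(e^{-u}\boldsymbol{1}) - f_t^{i_0}(\boldsymbol{0}).
\end{equation*}
Finally, we divide by $\mathbb{P}(|\boldsymbol{Z}_t|>0) = 1 - f_t^{i_0}(\boldsymbol{0})$, which yields \eqref{eq:harmonic_moment_formula}.

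The step needing the most care is confirming that this denominator is strictly positive, so that the conditioning is well defined. Under Assumption \ref{R-positive} the process is supercritical, and by Theorem \ref{moy} we have $\mathbb{E}(W^{(i_0)}) = u_{i_0} > 0$. Hence $\mathbb{P}(|\boldsymbol{Z}_\infty|>0)>0$, and therefore $\mathbb{P}(|\boldsymbol{Z}_t|>0) \ge \mathbb{P}(|\boldsymbol{Z}_\infty|>0) > 0$ for every $t$.

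A second minor point arises when $d=\infty$: the identification $\mathbb{E}(e^{-u|\boldsymbol{Z}_t|}) = f_t^{i_0}(e^{-u}\boldsymbol{1})$ requires the infinite product to be well defined. Here $|\boldsymbol{Z}_t|$ is almost surely finite whenever the conditioning is meaningful, and the iterate $f^{i_0}_t$ is by construction the generating function of $\boldsymbol{Z}_t^{i_0}$. Evaluating it at a constant vector is therefore just a sum over configurations with finitely many non-zero coordinates, so the expression is well defined.
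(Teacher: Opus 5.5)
Your proof is correct and follows essentially the same route as the paper: the Gamma-function representation of $|\boldsymbol{Z}_t|^{-r}$, an exchange of expectation and integral justified by Tonelli, and the identity $\mathbb{E}\bigl(e^{-u|\boldsymbol{Z}_t|}\mathbbm{1}\{|\boldsymbol{Z}_t|>0\}\bigr) = f_t^{i_0}(e^{-u}\boldsymbol{1}) - f_t^{i_0}(\boldsymbol{0})$. The only difference is cosmetic: you work with the indicator and then divide by $1-f_t^{i_0}(\boldsymbol{0})$, whose positivity you check, whereas the paper uses the conditional generating function directly.
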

The proof can be found in \ref{gamma}.

The evaluation of \eqref{eq:harmonic_moment_formula} requires the $t$th iterate of the generating function, which can be hard to compute numerically. To further analyse the decay of the harmonic moments for large values of $t$, it is going to be helpful to transform the original process into a process that cannot become extinct. In the single-type case, this transformation is called the Harris--Sevastyanov transformation and is used quite commonly (refer to \cite{harris1948, Bingham_1988}). The generalisation of this transformation to a multi-type process seems less commonly known and has properties that differ non-trivially from the single-type case. The only explicit statement of the general transformation and its properties known to us is found in \cite{jones} where it is used to analyse the left tail limit of the normed population size by considering the events of extinction and unusually slow growth separately. In addition, \cite{sagitov2013} considers this transformation specifically for the linear-fractional offspring distribution, while \cite{hs_mention} and \cite{hs_mention2} mention the transformation but do not state it explicitly.

The Harris--Sevastyanov transformation lets us transform the original supercritical process $(\boldsymbol{Z}_t)$ into a related process with null probability of extinction regardless of the type of the root ancestor. 
\begin{definition}[Multi-type Harris--Sevastyanov transformation]
  For a supercritical, multi-type branching process $(\boldsymbol{Z}_t)$, define a new process $(\boldsymbol{Y}_t)$ by setting $\boldsymbol{Y_0}=\boldsymbol{Z}_0$ and specifying its generating functions $\boldsymbol{F}(\boldsymbol{s})=(F^i(\boldsymbol{s}))_{i \in S}$, $F^i : [0,1]^{d}\rightarrow \mathbb{R}$ as
\begin{equation*}
\boldsymbol{F}(\boldsymbol{s}) = (\boldsymbol{f}(\boldsymbol{s} \odot (\boldsymbol{1}-\boldsymbol{q})+\boldsymbol{q}) - \boldsymbol{q})\oslash(\boldsymbol{1}-\boldsymbol{q}).
    % \boldsymbol{F}(\boldsymbol{s}) = \frac{\boldsymbol{f}(\boldsymbol{s}(\boldsymbol{1}-\boldsymbol{q})+\boldsymbol{q})- \boldsymbol{q}}{\boldsymbol{1}-\boldsymbol{q}}
\end{equation*}
\end{definition}
\begin{lemma}\label{lemma:iterates}
 The iterates of generating functions are related in an analogous way:
\begin{equation*}
    \boldsymbol{F}_t(\boldsymbol{s}) = (\boldsymbol{f}_t(\boldsymbol{s} \odot (\boldsymbol{1}-\boldsymbol{q})+\boldsymbol{q}) - \boldsymbol{q})\oslash(\boldsymbol{1}-\boldsymbol{q}).
\end{equation*}
\end{lemma}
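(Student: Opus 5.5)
Write the transformation as a conjugation. Define the affine map $h:[0,1]^d\to[0,1]^d$ by $h(\boldsymbol{s}) := \boldsymbol{s}\odot(\boldsymbol{1}-\boldsymbol{q})+\boldsymbol{q}$. Provided $q_i<1$ for all $i\in S$, it has inverse $h^{-1}(\boldsymbol{x}) = (\boldsymbol{x}-\boldsymbol{q})\oslash(\boldsymbol{1}-\boldsymbol{q})$. The definition then reads $\boldsymbol{F} = h^{-1}\circ\boldsymbol{f}\circ h$, and the claim is $\boldsymbol{F}_t = h^{-1}\circ\boldsymbol{f}_t\circ h$. Since $\boldsymbol{F}_t$ is, by the convention stated before Proposition~\ref{prop:compute_extinct}, the $t$-fold iterate $\boldsymbol{F}_{t+1}=\boldsymbol{F}\circ\boldsymbol{F}_t$ (componentwise $F^{i}_{t+1}(\boldsymbol{s}) = F^{i}(F^1_t(\boldsymbol{s}),F^2_t(\boldsymbol{s}),\dots)$), this is a telescoping identity. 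I will prove it by induction on $t$.

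\emph{Base cases.} For $t=0$, $\boldsymbol{F}_0(\boldsymbol{s})=\boldsymbol{s}$ and $h^{-1}(h(\boldsymbol{s}))=\boldsymbol{s}$. For $t=1$, the identity is the definition of $\boldsymbol{F}$.

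\emph{Inductive step.} Assume $\boldsymbol{F}_t = h^{-1}\circ\boldsymbol{f}_t\circ h$. Then
\begin{equation*}
\boldsymbol{F}_{t+1} = \boldsymbol{F}\circ\boldsymbol{F}_t = h^{-1}\circ\boldsymbol{f}\circ h\circ h^{-1}\circ\boldsymbol{f}_t\circ h = h^{-1}\circ\boldsymbol{f}\circ\boldsymbol{f}_t\circ h = h^{-1}\circ\boldsymbol{f}_{t+1}\circ h,
\end{equation*}
using $\boldsymbol{f}_{t+1}=\boldsymbol{f}\circ\boldsymbol{f}_t$. I would expand this componentwise, checking that $h$ and $h^{-1}$ act coordinatewise, so that $F^{i}(\boldsymbol{F}_t(\boldsymbol{s})) = \bigl(f^i(h(\boldsymbol{F}_t(\boldsymbol{s})))-q_i\bigr)/(1-q_i)$ with $h(\boldsymbol{F}_t(\boldsymbol{s})) = \boldsymbol{f}_t(h(\boldsymbol{s}))$.

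The only real obstacle is making sure each composition is legitimate: $h\circ h^{-1}$ must be the identity on the relevant set, and $\boldsymbol{F}_t(\boldsymbol{s})$ must lie in the domain $[0,1]^d$ where $\boldsymbol{F}$ is defined. Here I use that $\boldsymbol{f}$ is monotone and $\boldsymbol{f}(\boldsymbol{q})=\boldsymbol{q}$ (Proposition~\ref{prop:compute_extinct}) and $\boldsymbol{f}(\boldsymbol{1})=\boldsymbol{1}$. Since $h$ maps $[0,1]^d$ onto $\prod_i[q_i,1]$, iterating gives $\boldsymbol{q}=\boldsymbol{f}_t(\boldsymbol{q})\le \boldsymbol{f}_t(h(\boldsymbol{s}))\le\boldsymbol{1}$. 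So $\boldsymbol{f}_t\circ h$ takes values in $\prod_i[q_i,1]$, on which $h\circ h^{-1}$ is the identity, and $h^{-1}$ maps this set back into $[0,1]^d$. Finally, the division by $1-q_i$ is well defined because supercriticality with irreducibility gives $q_i<1$ for every $i$ (Assumption~\ref{ass:extinct} makes this uniform, but only pointwise positivity of $1-q_i$ is needed).
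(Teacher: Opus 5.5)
Your proof is correct and is essentially the paper's argument: both write $\boldsymbol{F}$ as the conjugate $h^{-1}\circ\boldsymbol{f}\circ h$ by the affine map $h(\boldsymbol{s})=\boldsymbol{s}\odot(\boldsymbol{1}-\boldsymbol{q})+\boldsymbol{q}$ and let the inner $h\circ h^{-1}$ cancel repeatedly. The paper peels $\boldsymbol{F}$ off on the inside via $\boldsymbol{F}_t=\boldsymbol{F}_{t-1}\circ\boldsymbol{F}$, whereas you peel it off on the outside via $\boldsymbol{F}_{t+1}=\boldsymbol{F}\circ\boldsymbol{F}_t$; this difference is immaterial, and your check that $\boldsymbol{f}_t\circ h$ takes values in $\prod_i[q_i,1]$, so that every composition stays in $[0,1]^d$, is a detail the paper leaves implicit.
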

\begin{lemma}\label{lemma:expected}
The expected values of $\boldsymbol{Z}_t$ and $\boldsymbol{Y}_t$ satisfy
\begin{equation*}
    \mathbb{E}(\boldsymbol{Y}_t^{i_0}) = \frac{\boldsymbol{1}-\boldsymbol{q}}{1-q_{i_0}}\odot\mathbb{E}(\boldsymbol{Z}_t^{i_0}).
\end{equation*}
 \end{lemma}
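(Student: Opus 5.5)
The plan is to differentiate the generating-function identity of Lemma \ref{lemma:iterates} at $\boldsymbol{s}=\boldsymbol{1}$. Write $\boldsymbol{g}(\boldsymbol{s}) := \boldsymbol{s}\odot(\boldsymbol{1}-\boldsymbol{q})+\boldsymbol{q}$. For each $i_0 \in S$, the lemma reads
\[
F_t^{i_0}(\boldsymbol{s}) = \frac{f_t^{i_0}(\boldsymbol{g}(\boldsymbol{s})) - q_{i_0}}{1-q_{i_0}}, \qquad \boldsymbol{s}\in[0,1]^d .
\]
Since $F_t^{i_0}$ is the generating function of $\boldsymbol{Y}_t^{i_0}$, the mean of its $j$th coordinate is the left partial derivative in $s_j$ at $\boldsymbol{1}$. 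Concretely,
\[
\mathbb{E}(Y_t^{i_0,j}) = \lim_{s_j\uparrow 1}\frac{1-F_t^{i_0}(\boldsymbol{1}^{(j)}(s_j))}{1-s_j},
\]
where $\boldsymbol{1}^{(j)}(s_j)$ is the all-ones vector with coordinate $j$ replaced by $s_j$. The analogous formula gives $\mathbb{E}(Z_t^{i_0,j}) = m^{(t)}_{i_0 j}$ in terms of $f_t^{i_0}$.

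The first step is to check that $\boldsymbol{g}(\boldsymbol{1})=\boldsymbol{1}$, so that $f_t^{i_0}(\boldsymbol{1})=1$ and $F_t^{i_0}(\boldsymbol{1})=1$. The second step is to note that $\boldsymbol{g}(\boldsymbol{1}^{(j)}(s_j))$ changes only coordinate $j$, moving it from $1$ to $s_j(1-q_j)+q_j$. The third step is to compute the difference quotient:
\[
\frac{1-F_t^{i_0}(\boldsymbol{1}^{(j)}(s_j))}{1-s_j}
= \frac{1-q_j}{1-q_{i_0}}\cdot \frac{1-f_t^{i_0}(\boldsymbol{1}^{(j)}(\sigma_j))}{1-\sigma_j},
\qquad \sigma_j := s_j(1-q_j)+q_j,
\]
which uses $1-\sigma_j=(1-s_j)(1-q_j)$. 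As $s_j\uparrow1$ we also have $\sigma_j\uparrow1$, provided $q_j<1$. Under Assumptions \ref{infinite_matrix} and \ref{R-positive} the process is supercritical and irreducible, so $q_j<1$ for every $j$. Letting $s_j \uparrow 1$ then gives $\mathbb{E}(Y_t^{i_0,j}) = \frac{1-q_j}{1-q_{i_0}}\,\mathbb{E}(Z_t^{i_0,j})$, which is exactly the componentwise statement.

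The main care point is justifying the one-sided derivative at the boundary when $d=\infty$. I will handle it through monotone convergence rather than any smoothness of $f_t^{i_0}$ in infinitely many variables. We have $\frac{1-f_t^{i_0}(\boldsymbol{1}^{(j)}(\sigma))}{1-\sigma} = \mathbb{E}\bigl(\frac{1-\sigma^{Z_t^{i_0,j}}}{1-\sigma}\bigr)$, and the integrand increases to $Z_t^{i_0,j}$ as $\sigma\uparrow1$. The limit is therefore the finite mean $m_{i_0 j}^{(t)}$, which is finite by Assumption \ref{infinite_matrix}. The same argument applies to $F_t^{i_0}$ and $\boldsymbol{Y}_t$, so both means equal the corresponding limits and the identity follows.

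As an alternative, one could prove the $t=1$ case directly from the definition of $\boldsymbol{F}$ and then induct using the iterate structure. That route relates $\mathbb{E}\boldsymbol{Y}_t$ to a conjugated matrix power $D^{-1}\boldsymbol{M}^tD$ with $D=\mathrm{diag}(\boldsymbol{1}-\boldsymbol{q})$. The direct differentiation above avoids this detour.
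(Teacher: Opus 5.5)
Your proposal is correct and follows the paper's proof: both differentiate the identity of Lemma \ref{lemma:iterates} at $\boldsymbol{s}=\boldsymbol{1}$, and the paper does this with a one-line chain-rule computation of $\nabla F_t^{i_0}(\boldsymbol{1})$ using $\boldsymbol{x}(\boldsymbol{1})=\boldsymbol{1}$. Your coordinatewise difference quotient with monotone convergence makes rigorous the one-sided, possibly infinite-dimensional derivative at the boundary point that the paper takes for granted; note also that $q_j<1$ is most cleanly justified via Theorem \ref{moy}, since $\mathbb{E}(W^{(j)})=u_j>0$.
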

 The proof of Lemmata \ref{lemma:iterates} and \ref{lemma:expected} can be found in \ref{iterates} and \ref{expect_hs}, respectively.
In the following theorem, we show how $\mathbb{E}\left(|\boldsymbol{Z}_t|^{-1}\bigm| |\boldsymbol{Z}_{\infty}| > 0\right)$ can be bounded in terms of quantities which depend only upon $\boldsymbol{Y}_1$. These can be computed much more easily than the harmonic moment of $|\boldsymbol{Z}_t|$.

\begin{theorem} \label{hs_thm}
    Under Assumptions \assumptions, for any $r \in \mathbb{N}_+$,
 \begin{equation*}
         (1-\sup_{i \in S} q_i)^r \left( \mathbb{E}\left(\sup_{i \in S} |\boldsymbol{Y}_1^i|\right) \right)^{-tr} \le 
    \mathbb{E}\left(\frac{1}{|\boldsymbol{Z}_t|^{r}}\bigm| |\boldsymbol{Z}_{\infty}| > 0\right) 
    \le (1-\sup_{i \in S} q_i)^{-2} \left(\sup_{i \in S} \mathbb{E}\left(\frac{1}{|\boldsymbol{Y}_1^i|}\right)\right)^{tr}.
 \end{equation*}
\end{theorem}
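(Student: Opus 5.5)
The plan is to compare $\boldsymbol{Z}$ with its Harris--Sevastyanov transform $\boldsymbol{Y}$, proving the two inequalities separately: the lower bound by Jensen plus a first-moment computation (Lemma \ref{lemma:expected}), and the upper bound by a pathwise domination $|\boldsymbol{Y}_t|\le|\boldsymbol{Z}_t|$ plus a one-generation contraction iterated $t$ times. The lower bound should be routine; the main obstacle is obtaining the exponent $tr$ rather than $t$ in the upper bound.

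\emph{Lower bound.} Since $x\mapsto x^{-r}$ is convex on $(0,\infty)$, Jensen gives
\[
\mathbb{E}\bigl(|\boldsymbol{Z}_t|^{-r}\bigm| |\boldsymbol{Z}_\infty|>0\bigr)\ge \bigl(\mathbb{E}(|\boldsymbol{Z}_t|\bigm| |\boldsymbol{Z}_\infty|>0)\bigr)^{-r}.
\]
The root is of type $i_0$ and the conditioning event has probability $1-q_{i_0}$, so
\[
\mathbb{E}(|\boldsymbol{Z}_t|\mid |\boldsymbol{Z}_\infty|>0)\le \mathbb{E}|\boldsymbol{Z}_t|/(1-q_{i_0}).
\]
By Lemma \ref{lemma:expected}, $\mathbb{E}Z_t^{i_0,j}=(1-q_{i_0})\mathbb{E}Y_t^{i_0,j}/(1-q_j)\le (1-q_{i_0})\mathbb{E}Y_t^{i_0,j}/(1-\sup_i q_i)$. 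Summing over $j$ gives
\[
\mathbb{E}(|\boldsymbol{Z}_t|\mid |\boldsymbol{Z}_\infty|>0)\le \mathbb{E}|\boldsymbol{Y}_t|/(1-\sup_i q_i).
\]
Conditioning on $\boldsymbol{Y}_{t-1}$ and using the branching property of $\boldsymbol{Y}$, we get $\mathbb{E}(|\boldsymbol{Y}_t|\mid \boldsymbol{Y}_{t-1})\le |\boldsymbol{Y}_{t-1}|\,\mathbb{E}(\sup_i|\boldsymbol{Y}_1^i|)$. Iterating yields $\mathbb{E}|\boldsymbol{Y}_t|\le (\mathbb{E}\sup_i|\boldsymbol{Y}_1^i|)^t$, and raising to the power $-r$ gives the stated lower bound.

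\emph{Upper bound: reduction to $\boldsymbol{Y}$.} I will use the probabilistic interpretation of $\boldsymbol{F}$: on $\{|\boldsymbol{Z}_\infty|>0\}$, let $\boldsymbol{Y}_t$ count the individuals in generation $t$ with an infinite line of descent. By the formula in Lemma \ref{lemma:iterates}, conditionally on non-extinction this process has exactly the law of the Harris--Sevastyanov process. Expanding $f_t(\boldsymbol{s}\odot(\boldsymbol{1}-\boldsymbol{q})+\boldsymbol{q})$ says each individual is independently ``surviving'' with probability $1-q_j$ and is marked by $s_j$ if so; this requires checking. Since $1\le|\boldsymbol{Y}_t|\le|\boldsymbol{Z}_t|$ on this event,
\[
\mathbb{E}(|\boldsymbol{Z}_t|^{-r}\mid |\boldsymbol{Z}_\infty|>0)\le \mathbb{E}|\boldsymbol{Y}_t^{i_0}|^{-r}.
\]
If the coupling only holds up to reweighting by type (as in Lemma \ref{lemma:expected}), I expect the factor $(1-\sup_i q_i)^{-2}$ to absorb the density ratios $(1-q_j)/(1-q_{i_0})$. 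One factor comes from conditioning on survival of the root, the other from the change of measure.

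\emph{Upper bound: one-generation contraction.} Write $m:=\sup_i\mathbb{E}(1/|\boldsymbol{Y}_1^i|)\le1$. Given $|\boldsymbol{Y}_{t-1}|=n$, decompose $|\boldsymbol{Y}_t|=\sum_{j=1}^n a_j$ with independent $a_j\ge1$, each satisfying $\mathbb{E}a_j^{-1}\le m$. The aim is
\[
\mathbb{E}\Bigl(\bigl(\textstyle\sum_j a_j\bigr)^{-r}\Bigm|\boldsymbol{Y}_{t-1}\Bigr)\le m^r n^{-r},
\]
which iterates to $\mathbb{E}|\boldsymbol{Y}_t|^{-r}\le m^{tr}$. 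I would try AM--GM first: $(\sum_j a_j)^{-r}\le n^{-r}\prod_j a_j^{-r/n}$. Independence and Jensen (since $r/n\le1$) then give $\prod_j\mathbb{E}a_j^{-r/n}\le\prod_j(\mathbb{E}a_j^{-1})^{r/n}\le m^r$. This works for $n\ge r$. When $n<r$, the bound $a_j^{-r/n}\le a_j^{-1}$ only yields $m^n$ instead of $m^r$. Here I would try to exploit $|\boldsymbol{Y}_{t-1}|\ge1$ together with a cruder estimate such as $(\sum_j a_j)^{-r}\le n^{-r}\,n^{-1}\sum_j a_j^{-r}$. Failing that, I would accept a constant factor from the first few generations, again absorbing it into the prefactor.
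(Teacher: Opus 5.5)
\paragraph{What is correct.} Your lower bound is correct, and it takes a different and sounder route than the paper. The paper compares $\mathbb{E}(|\boldsymbol{Z}_t|^{-r}\mid|\boldsymbol{Z}_t|>0)$ with $1/\mathbb{E}|\boldsymbol{Y}_t|^{r}$ through factorial moments and Stirling numbers, then bounds $\mathbb{E}|\boldsymbol{Y}_t|^r$ in Lemma~\ref{y_lower}. This route has two problems. First, chaining Lemmas~\ref{t_and_infinity}, \ref{z_and_y_lower} and \ref{y_lower} actually gives the constant $(1-\sup_i q_i)^{r+1}$. Second, the proof of Lemma~\ref{y_lower} factorizes $\mathbb{E}\bigl(\prod_j|\boldsymbol{Y}^{i_j}_{t-1,1,s_j}|\bigm|\boldsymbol{Y}_{t-1}\bigr)$ into a product of means even when indices repeat. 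For $d=1$, $t=1$ it would assert $\mathbb{E}|\boldsymbol{Y}_1|^r\le(\mathbb{E}|\boldsymbol{Y}_1|)^r$, which is false. Your argument uses Jensen on the first moment, Lemma~\ref{lemma:expected}, and the first-moment iteration (which is valid). It avoids all of this and gives exactly $(1-\sup_i q_i)^r$.

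Your coupling is also exact, and no reweighting is needed. Mark each generation-$t$ individual of type $j$ independently as having an infinite line of descent with probability $1-q_j$. The marked counts then have joint generating function $\boldsymbol{f}_t(\boldsymbol{s}\odot(\boldsymbol{1}-\boldsymbol{q})+\boldsymbol{q})$. The root survives iff some individual is marked, which has probability $1-f_t^{i_0}(\boldsymbol{q})=1-q_{i_0}$. Lemma~\ref{lemma:iterates} then identifies the conditional law of the marked counts with that of $\boldsymbol{Y}_t$. Hence $\mathbb{E}(|\boldsymbol{Z}_t|^{-r}\mid|\boldsymbol{Z}_\infty|>0)\le\mathbb{E}|\boldsymbol{Y}_t|^{-r}$ with no prefactor at all. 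For $r=1$ your AM--GM contraction completes the upper bound.

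\paragraph{The gap.} The gap is the case $n<r$ of the contraction. It cannot be closed, not even by absorbing a constant into the prefactor, because the stated upper bound is false for $r\ge2$. At $n=1$ the step would need $\mathbb{E}|\boldsymbol{Y}_1^i|^{-r}\le m^r$. Jensen gives the reverse, $\mathbb{E}|\boldsymbol{Y}_1^i|^{-r}\ge(\mathbb{E}|\boldsymbol{Y}_1^i|^{-1})^r$, strictly unless $|\boldsymbol{Y}_1^i|$ is a.s.\ constant. Moreover, the population can stay below $r$ for arbitrarily many generations, so the loss is not confined to the first few steps.

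Concretely, take two types and let every individual have one child with probability $0.9$ and ten children with probability $0.1$, each child's type uniform on $\{1,2\}$. All assumptions hold. Since $\boldsymbol{q}=\boldsymbol{0}$, $\boldsymbol{Y}$ has the law of $\boldsymbol{Z}$, the prefactor equals $1$, and $m=0.91$. Yet
\[
\mathbb{E}\bigl(|\boldsymbol{Z}_t|^{-2}\bigm||\boldsymbol{Z}_\infty|>0\bigr)\ge\mathbb{P}(|\boldsymbol{Z}_t|=1)=0.9^t>0.8281^t=m^{2t},\qquad t\ge1,
\]
and the ratio grows geometrically.

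The paper's Lemma~\ref{y_upper} does not supply a missing idea here. After AM--HM it writes $\mathbb{E}\prod_j|\boldsymbol{Y}^{i_j}_{t-1,1,s_j}|^{-1}=\prod_j\mathbb{E}|\boldsymbol{Y}^{i_j}_{t-1,1,s_j}|^{-1}$ ``by independence'' for all tuples. This fails on tuples with repeated $(i_j,s_j)$, where $\mathbb{E}a^{-2}\ge(\mathbb{E}a^{-1})^2$, which is precisely your $n<r$ obstruction. What your argument does establish for every $r$ is $\mathbb{E}(|\boldsymbol{Z}_t|^{-r}\mid|\boldsymbol{Z}_\infty|>0)\le\mathbb{E}|\boldsymbol{Y}_t|^{-1}\le m^{t}$, that is, exponent $t$ instead of $tr$.
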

Finally, combining the bounds from Theorem \ref{hs_thm} with the result of Theorem \ref{harmonic_thm}, in the following corollary we clearly see the exponential rate at which the probability of coalescing before generation $t$ converges to $1$.
\begin{corollary}\label{corr:mainresult}
    Under Assumptions \assumptions, for any $ \epsilon \in (0, \inf_{i \in S, j \in \{1,k\}}{\mathbb{E}((W^{(i)})^j)})$, it holds that
    \begin{align*}
1 -C_4^{\epsilon}\left(\mathbb{E}\left( \sup_{i \in S}\frac{1}{|\boldsymbol{Y}_1^i|}\right)\right)^t
&\le \lim_{T \rightarrow \infty}\mathbb{P}(X_{T,k} < t| |\boldsymbol{Z}_T| \ge k ) \\
&\le
    1 - C_5^{\epsilon}\left(\mathbb{E}\left(\sup_{i \in S} |\boldsymbol{Y}_1^i|\right) \right)^{-t(k-1)}  + C_6^{\epsilon}\left(\mathbb{E}\left(\sup_{i \in S}\frac{1}{|\boldsymbol{Y}_1^i|}\right)\right)^{tk}
\end{align*}
where $C_4^{\epsilon} := (C_1^{\epsilon}+C_3^{\epsilon}) (1-\sup_{i \in S} q_i)^{-2}$, $C_5^{\epsilon} := C_2^{\epsilon} (1-\sup_{i \in S} q_i)^k 
    $ and $C_6^{\epsilon} := C_2^{\epsilon} C_3^{\epsilon} (1-\sup_{i \in S} q_i)^{-2}$ and $C_1^{\epsilon}, C_2^{\epsilon}$ and $C_3^{\epsilon}$ are as in Theorem \ref{harmonic_thm}.
\end{corollary}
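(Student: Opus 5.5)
The plan is to substitute the two-sided bounds of Theorem \ref{hs_thm} directly into the two-sided bounds of Theorem \ref{harmonic_thm}. The constants $C_1^{\epsilon}$, $C_2^{\epsilon}$, $C_3^{\epsilon}$ do not depend on $t$, so only the harmonic moments need to be controlled. One preliminary adjustment is needed. Theorem \ref{hs_thm} is stated with $\sup_{i \in S}\mathbb{E}(|\boldsymbol{Y}_1^i|^{-1})$ in the upper bound, whereas the corollary uses $\mathbb{E}(\sup_{i \in S}|\boldsymbol{Y}_1^i|^{-1})$. Since $\sup_i \mathbb{E}(\cdot) \le \mathbb{E}(\sup_i \cdot)$ for nonnegative quantities (all the $\boldsymbol{Y}^i_1$ can be realised on a common probability space), the upper bound of Theorem \ref{hs_thm} remains valid with the supremum moved inside. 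This only weakens the bound, and weakening is permitted because the quantity appears with a sign that makes it an upper estimate.

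\emph{Lower bound on the probability.} Theorem \ref{harmonic_thm} gives
\[
\lim_{T\to\infty}\mathbb{P}(X_{T,k}<t\mid |\boldsymbol{Z}_T|\ge k)\ge 1-(C_1^{\epsilon}+C_3^{\epsilon})\,\mathbb{E}\bigl(|\boldsymbol{Z}_t|^{-1}\bigm| |\boldsymbol{Z}_\infty|>0\bigr).
\]
Applying Theorem \ref{hs_thm} with $r=1$ gives
\[
\mathbb{E}\bigl(|\boldsymbol{Z}_t|^{-1}\bigm| |\boldsymbol{Z}_\infty|>0\bigr)\le (1-\sup_{i \in S} q_i)^{-2}\Bigl(\mathbb{E}\bigl(\sup_{i \in S}|\boldsymbol{Y}_1^i|^{-1}\bigr)\Bigr)^{t}.
\]
Since $C_1^{\epsilon}+C_3^{\epsilon}\ge 0$, substituting yields the stated lower bound with $C_4^{\epsilon}$.

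\emph{Upper bound on the probability.} Theorem \ref{harmonic_thm} gives
\[
\lim_{T\to\infty}\mathbb{P}(\cdot)\le 1-C_2^{\epsilon}\,\mathbb{E}\bigl(|\boldsymbol{Z}_t|^{-(k-1)}\mid\cdot\bigr)+C_2^{\epsilon}C_3^{\epsilon}\,\mathbb{E}\bigl(|\boldsymbol{Z}_t|^{-k}\mid\cdot\bigr).
\]
The two terms are handled separately.

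\begin{itemize}
\item For the negative term, use the lower bound of Theorem \ref{hs_thm} with $r=k-1$:
\[
\mathbb{E}\bigl(|\boldsymbol{Z}_t|^{-(k-1)}\mid\cdot\bigr)\ge (1-\sup_{i \in S} q_i)^{k-1}\bigl(\mathbb{E}\bigl(\sup_{i \in S}|\boldsymbol{Y}_1^i|\bigr)\bigr)^{-t(k-1)}.
\]
Since $1-\sup_{i \in S} q_i\in(0,1]$, we have $(1-\sup_{i \in S} q_i)^{k-1}\ge(1-\sup_{i \in S} q_i)^{k}$, which gives the constant $C_5^{\epsilon}$. Using the weaker exponent $k$ matches the stated constant.
\item For the positive term, use the upper bound of Theorem \ref{hs_thm} with $r=k$ together with the sup/expectation swap above. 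This gives the term with $C_6^{\epsilon}$.
\end{itemize}

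Here $C_2^{\epsilon}\ge 0$ is needed for the inequalities to run in the right direction, and $C_3^{\epsilon}\ge 0$ holds trivially. For $C_2^{\epsilon}$, the numerator is positive because $\epsilon<\inf_{i \in S}\mathbb{E}((W^{(i)})^k)$ by the assumed range of $\epsilon$, and the denominator is positive.

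There is no genuine obstacle, since the argument is bookkeeping. The only points needing care are the sign of $C_2^{\epsilon}$, the direction of the sup/expectation interchange, and the exponent adjustment $k-1\to k$ on $(1-\sup_{i \in S} q_i)$. Each of these is justified by monotonicity.
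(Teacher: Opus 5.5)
Your proposal is correct and follows the paper's route exactly: the paper obtains the corollary by substituting the two-sided harmonic-moment bounds of Theorem \ref{hs_thm}, with $r=1$, $r=k-1$ and $r=k$, into Theorem \ref{harmonic_thm}. Your bookkeeping fills in what the paper leaves implicit: nonnegativity of $C_1^{\epsilon}$, $C_2^{\epsilon}$, $C_3^{\epsilon}$ from the range of $\epsilon$; weakening $(1-\sup_{i\in S}q_i)^{k-1}$ to $(1-\sup_{i\in S}q_i)^{k}$; and moving the supremum inside the expectation, which is anyway the form $\mathbb{E}(\sup_{i\in S}|\boldsymbol{Y}_1^i|^{-1})$ that the paper's proof of Theorem \ref{hs_thm} actually delivers in \eqref{eq:with_f}.
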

\section{Proof of Theorem~\ref{exact_thm}}\label{exact_section}
The structure of the proof follows \cite{Hong2015}, but accounts for the possibility of the process becoming extinct and having a possibly  countably infinite number of types. The proof relies on decomposing the population at time $T$ in terms of a sum of families started at an earlier time $t$---recall \eqref{eq:Zdecomposition}. When considering the limiting behaviour of the coalescence probability, the details rely on the continuous mapping theorem and a modified version of the conditional dominated convergence theorem to express $\mathbb{P}(X_{T,k} > t \bigm|  |\boldsymbol{Z}_T| \ge k)$ in terms of the limits of every independent branching process started from each individual alive at generation $t$. 

Consider $\mathbb{P}(X_{T,k} \ge t \bigm| |\boldsymbol{Z}_T| \ge k)$, the probability that all sampled individuals coalesced in the interval $[t, T)$. If the sampled individuals share a MRCA on $[t, T)$, then they are descended from the same ancestor at generation $t$. 
Since the sampled individuals can be descended from any of the families started at generation $t$, we can express $\mathbb{P}(X_{T,k} \ge t \bigm| |\boldsymbol{Z}_T| \ge k)$ as a sum over each possible ancestor  at time $t$ of the probability that all $k$ individuals sampled at time $T$ are descended from that ancestor.

Conditional upon a realisation of the process until time $T$, for a type $i \in S$ and an individual $s \in [\z]$, the probability of the first sampled individual originating from ancestor with index $s$ of type $i$ is exactly the proportion of individuals alive at time $T$ descended from that ancestor,
%\begin{equation*}
    ${|\Z|}/{|\boldsymbol{Z}_T|}$. 
%\end{equation*}
Analogously, the probability of the second sampled individual originating from the same ancestor is again equal of the size of the family started by individual $s$ of type $i$ divided by the total population size after removing the already sampled individual from the population:
%\begin{equation*}
$({|\Z|-1})/{(|\boldsymbol{Z}_t|-1)}$.
%\end{equation*}
Thus $\mathbb{P}(X_{T,k} \ge t \bigm| |\boldsymbol{Z}_T| \ge k)$ can be expressed as the expected value, with respect to the law of the process, of these quantities. 
%Denote by $\boldsymbol{Z}_{t, T-t, s}^i$ the branching process started from an ancestor of type $i$ with index $s$ in generation $t$, and allowed to evolve for $T-t$ generations.
We can write, recalling that the Pochhammer symbol, $(\cdot)_k$, denotes the $k$th falling factorial:
\begin{align}
\mathbb{P}(X_{T,k} \ge t \bigm| |\boldsymbol{Z}_T| \ge k) %\\
    &=\mathbb{E}\left(\frac{\sum_{i \in S}\sum_{s=1}^{\z} (|\Z|)_k }
    % denominator 
    {(|\boldsymbol{Z}_T|)_k} \bigm| |\boldsymbol{Z}_T| \ge k\right) \nonumber \\
    &=\mathbb{E}\left(\frac{\sum_{i \in S}\sum_{s=1}^{\z}  (|\Z|)_k}
    % denominator
    {(\sum_{i \in S} \sum_{s=1}^{\z} |\Z|)_k }\bigm| |\boldsymbol{Z}_T| \ge k\right), \label{last_is_sum}
\end{align}
where expression (\ref{last_is_sum}) is obtained by decomposing the denominator using \eqref{eq:Zdecomposition}. 
%the population at time $T$ in terms of the offspring of individuals at time $t$ 
%\begin{equation} \label{eq:ZT}
%|\boldsymbol{Z}_T|=    \sum_{i \in S} \sum_{s=1}^{\z} |\Z|.
%\end{equation}
Letting $T\to\infty$ we find
\begin{align}
    \lim_{T \rightarrow \infty}\mathbb{P}(X_{T,k} \ge t\bigm| |\boldsymbol{Z}_T| \ge k) %\label{first_line}\\
    %&= \lim_{T \rightarrow \infty} \mathbb{E}\left(\frac{ \sum_{i \in S}\sum_{s=1}^{\z}  (|\Z|)_k}
    % denominator
    %{(\sum_{i \in S} \sum_{s=1}^{\z} |\Z|)_k}\bigm| |\boldsymbol{Z}_T| \ge k\right) \nonumber  \\
    &=   \mathbb{E}\left(\lim_{T \rightarrow \infty}\frac{ \sum_{i \in S}\sum_{s=1}^{\z}  (|\Z|)_k}
    % denominator
    {(\sum_{i \in S} \sum_{s=1}^{\z} |\Z|)_k}\bigm| |\boldsymbol{Z}_{\infty}| >0\right),\label{eq:limit_expectation}
\end{align}
where the exchange of limits and the conditioning event in \eqref{eq:limit_expectation} is possible due to a modified version of the conditional dominated convergence theorem, a proof of which can be found in \ref{dominated}. We rescale the population size in  \eqref{eq:limit_expectation} by $(R^{T-t})^k$ to find
\begin{align}
\lim_{T \rightarrow \infty}\mathbb{P}(X_{T,k} \ge t\bigm| |\boldsymbol{Z}_T| \ge k) 
%&\mathbb{E}\left(\lim_{T \rightarrow \infty}\frac{ \sum_{i \in S}\sum_{s=1}^{\z}  (|\Z|)_k}
    % denominator
    %{(\sum_{i \in S} \sum_{s=1}^{\z} |\Z|)_k}\bigm| |\boldsymbol{Z}_{\infty}| >0\right) \notag\\
    & = \mathbb{E}\left(\lim_{T \rightarrow \infty}\frac{ \sum_{i \in S}\sum_{s=1}^{\z} \prod_{j=0}^{k-1} R^{T-t}(|\Z|-j)}
    % denominator
    {\prod_{j=0}^{k-1}\sum_{i \in S} \sum_{s=1}^{\z} R^{T-t}(|\Z| - j)}\bigm| |\boldsymbol{Z}_{\infty}| >0\right).\label{division}
\end{align}
Let us investigate the convergence of $ R^{T-t}(|\Z| - j )$ for $j \in \{0,\dots,k-1\}$. Under Assumptions~\ref{infinite_matrix} and~\ref{R-positive}, using Theorem~\ref{moy}, the limiting behaviour of $R^{T-t} (|\Z| - j)$ is not influenced by the constant $j$, so
\begin{equation*}
    R^{T-t} (|\Z| - j) = R^{T-t} |\Z| - R^{T-t} j \xrightarrow{L^2} W^{(i)}_{t, s}, \qquad T\to\infty.
\end{equation*}
$L^2$ convergence implies convergence in probability, so we can apply the continuous mapping theorem (see, e.g., \cite[Chapter 5, p.\ 103]{kallenberg}) for the ratio, sum, and product of the random variables in (\ref{division}) to obtain
\begin{align*}
    %&\mathbb{E}\left(\lim_{T \rightarrow \infty}\frac{ \sum_{i \in S}\sum_{s=1}^{\z} \prod_{j=0}^{k-1} R^{T-t}(|\Z|-j)}
    % denominator
    %{\prod_{j=0}^{k-1}\sum_{i \in S} \sum_{s=1}^{\z} R^{T-t}(|\Z| - j)}\bigm| |\boldsymbol{Z}_{\infty}| >0\right) \nonumber\\
    \lim_{T \rightarrow \infty}\mathbb{P}(X_{T,k} \ge t\bigm| |\boldsymbol{Z}_T| \ge k) 
    &= \mathbb{E}\left(\frac{ \sum_{i \in S}\sum_{s=1}^{\z}  \prod_{j=0}^{k-1} W^{(i)}_{t, s}}
    % denominator
    {\prod_{j=0}^{k-1}\sum_{i \in S} \sum_{s=1}^{Z_t^{i_0,i}} W^{(i)}_{t, s}}\bigm| |\boldsymbol{Z}_{\infty}| >0\right) \nonumber\\
    &=\mathbb{E}\left(\frac{ \sum_{i \in S}\sum_{s=1}^{\z}   (W^{(i)}_{t, s})^k}
    % denominator
    {(\sum_{i \in S} \sum_{s=1}^{Z_t^{i_0,i}} W^{(i)}_{t, s})^k}\bigm| |\boldsymbol{Z}_{\infty}| >0\right), %\label{eq:continuous_mapping}
\end{align*}
as desired.
%Combining \eqref{eq:limit_expectation}, \eqref{division} and \eqref{eq:continuous_mapping} yields \eqref{equal_expression}, the desired result.
%\begin{align}
%    \lim_{T \rightarrow \infty}\mathbb{P}(X_{T,k} < t\bigm| |\boldsymbol{Z}_T| \ge k) %\nonumber \\
%    & = 1- \mathbb{E}\left(\frac{ \sum_{i \in S}\sum_{s=1}^{\z}   (W^{(i)}_{t, s})^k}
%    % denominator
%    {\left(\sum_{i \in S} \sum_{s=1}^{Z_t^{i_0,i}}W^{(i)}_{t, s}\right)^k}\bigm| |\boldsymbol{Z}_{\infty}| > 0\right). \label{eq:just_above}
%\end{align}
\section{Proof of Theorem \ref{harmonic_thm}}\label{harmonic_section}

The proof is structured as follows. We relate the right-hand side of equation \eqref{equal_expression} to the harmonic moments of $|\boldsymbol{Z}_t|$ by first dividing the numerator and denominator of the ratio inside the expectation by $|\boldsymbol{Z}_t|$. We then consider the event, $\A$, on which the sum of $W^{(i)}_{t,s}$s lies within $\epsilon$ of its expected value, which allows us to bound the coalescence probability in terms of the harmonic moments of $|\boldsymbol{Z}_t|$ and explicit constants. 

Recalling \eqref{equal_expression}, we first rewrite
%We condition on the population size at time $t$ by using the tower rule for conditional expectation, and then divide both the numerator and denominator by the full population size at time $t$:
\begin{align}
    &\mathbb{E}\left(\frac{ \sum_{i \in S}\sum_{s=1}^{\z}   (W^{(i)}_{t, s})^k}
    % denominator
    {(\sum_{i \in S} \sum_{s=1}^{\z} W^{(i)}_{t, s})^k}\bigm| |\boldsymbol{Z}_{\infty}| > 0\right) \nonumber\\
    % &=\mathbb{E}\left(\mathbb{E}\left(\frac{\sum_{i \in S}\sum_{s=1}^{\z} (W_{t,s}^{(i)})^k }
    % denominator
    % {(\sum_{i \in S}\sum_{s=1}^{\z}W_{t,s}^{(i)})^k}\bigm| |\boldsymbol{Z}_{\infty}| > 0, \boldsymbol{Z}_t  \right)  \bigm| |\boldsymbol{Z}_{\infty}| > 0\right) \label{tower_rule}\\
    %&= \mathbb{E}\left(\mathbb{E}\left(\frac{\sum_{i \in S}\sum_{s=1}^{\z} \frac{(W^{(i)}_{t, s})^k}{|\boldsymbol{Z}_t|} }
    % denominator
    %{(|\boldsymbol{Z}_t|^{\frac{k-1}{k}}\sum_{i \in S}\sum_{s=1}^{\z}\frac{W^{(i)}_{t, s}}{|\boldsymbol{Z}_t|})^k}\bigm| |\boldsymbol{Z}_{\infty}| > 0, \boldsymbol{Z}_t  \right)  \bigm| |\boldsymbol{Z}_{\infty}| > 0\right)
    &= \mathbb{E}\left(\mathbb{E}\left(\frac{\V{k}}
    {(|\boldsymbol{Z}_t|^{\frac{k-1}{k}}\V{1})^k}\bigm| |\boldsymbol{Z}_{\infty}| > 0, \boldsymbol{Z}_t  \right)  \bigm| |\boldsymbol{Z}_{\infty}| > 0\right),
    \label{beforeLLN}
\end{align}
where we will make extensive use of the quantities
\begin{equation}
   \V{j} := \sum_{i \in S}\sum_{s=1}^{\z}\frac{(W^{(i)}_{t, s})^j}{|\boldsymbol{Z}_t|}, \qquad j\in\mathbb{N}_+. \label{eq:V}
\end{equation}
Let us now explore the conditional mean and variance of $\V{j}$ given $\boldsymbol{Z}_t$.

\begin{lemma}\label{lemma_moments}
    Under Assumption \ref{infinite_matrix} and \ref{R-positive}, for $j \in \mathbb{N}_+$
      \begin{align} \label{eq:condEidentity}
        %\mathbb{E}\left(\sum_{i \in S}\sum_{s=1}^{\z}\frac{(W^{(i)}_{t, s})^j}{|\boldsymbol{Z}_t|}\bigm|  \boldsymbol{Z}_t, |\boldsymbol{Z}_t| > 0 \right) &= \frac{\sum_{i \in S}\z \mathbb{E}((W^{(i)})^j)}{|\boldsymbol{Z}_t|} ,\\
        %\text{ and } \quad
        %\Var\left(\sum_{i \in S}\sum_{s=1}^{\z}\frac{(W^{(i)}_{t, s})^j}{|\boldsymbol{Z}_t|}\bigm|  \boldsymbol{Z}_t, |\boldsymbol{Z}_t| > 0 \right) &= \frac{\sum_{i \in S} \z \Var((W^{(i)})^j) }{|\boldsymbol{Z}_t|^2}.\label{eq:condVidentity}
        \mathbb{E}\left(\V{j}\bigm|  \boldsymbol{Z}_t, |\boldsymbol{Z}_t| > 0 \right) &= \frac{\sum_{i \in S}\z \mathbb{E}((W^{(i)})^j)}{|\boldsymbol{Z}_t|} ,\\
        \text{ and } \quad
        \Var\left(\V{j}\bigm|  \boldsymbol{Z}_t, |\boldsymbol{Z}_t| > 0 \right) &= \frac{\sum_{i \in S} \z \Var((W^{(i)})^j) }{|\boldsymbol{Z}_t|^2}.\label{eq:condVidentity}
    \end{align}
\end{lemma}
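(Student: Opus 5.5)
The plan is to exploit the branching decomposition \eqref{eq:Zdecomposition}. Conditional on $\boldsymbol{Z}_t$, the limits $W^{(i)}_{t,s}$, for $i\in S$ and $s\in[\z]$, are independent. Each has the law of $W^{(i)}$. This follows because $W^{(i)}_{t,s}$ is the (almost sure / $L^2$) limit of $R^{T-t}|\Z|$ by Theorem \ref{moy}, and the subtrees $\Z$ are, given $\boldsymbol{Z}_t$, independent copies of $\boldsymbol{Z}^i_{T-t}$ that are independent of $\mathcal{F}_t$. Limits of independent families remain independent, and equal in law to $W^{(i)}$. So the first step is to record this conditional independence and identical distribution, including on the event $\{|\boldsymbol{Z}_t|>0\}$. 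That event is $\boldsymbol{Z}_t$-measurable and therefore harmless: conditioning on it and on $\boldsymbol{Z}_t$ is the same as conditioning on $\boldsymbol{Z}_t$ restricted to that set.

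Given this, $|\boldsymbol{Z}_t|$ and the counts $\z$ are constants under the conditioning. Linearity of conditional expectation gives
\begin{equation*}
\mathbb{E}\left(\V{j}\bigm|\boldsymbol{Z}_t,|\boldsymbol{Z}_t|>0\right)=\frac{1}{|\boldsymbol{Z}_t|}\sum_{i\in S}\sum_{s=1}^{\z}\mathbb{E}\left((W^{(i)})^j\right)=\frac{\sum_{i\in S}\z\,\mathbb{E}((W^{(i)})^j)}{|\boldsymbol{Z}_t|}.
\end{equation*}
Conditional independence makes all cross covariances vanish, so the conditional variance is the sum of the individual variances scaled by $|\boldsymbol{Z}_t|^{-2}$. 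This yields \eqref{eq:condVidentity}.

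When $d=\infty$ the sum over $i\in S$ is countable, but since $|\boldsymbol{Z}_t|<\infty$ almost surely only finitely many $\z$ are nonzero. Hence each conditional sum is a finite sum, and exchanging expectation and summation needs no extra justification (monotone convergence also covers it, because the terms are nonnegative).

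The main point requiring care is the first step: that the $W^{(i)}_{t,s}$ exist and are conditionally i.i.d.\ with the right marginals. I would handle it by noting that $L^2$ convergence along $T$ gives an almost surely convergent subsequence. Joint laws of finitely many independent subtree processes converge, so their limits inherit independence. Finiteness of the moments appearing on the right, where required, is a separate matter. For $j\le 2k$ it is supplied by Assumption \ref{moments} when the lemma is applied. Otherwise the identities should be read in $[0,\infty]$ for the mean, and the variance identity as valid whenever the second moments are finite.
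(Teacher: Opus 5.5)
Your proposal is correct and follows essentially the paper's route: linearity of conditional expectation (justified by monotone convergence over the countable index set), together with independence of each $W^{(i)}_{t,s}$ from $\boldsymbol{Z}_t$ and its equality in law with $W^{(i)}$. Your explicit use of mutual conditional independence of the subtree limits to make the cross-covariances vanish supplies the detail behind the variance identity, which the paper only calls ``analogous''. One minor slip: finiteness of $\Var((W^{(i)})^j)$ requires $\mathbb{E}((W^{(i)})^{2j})<\infty$, so Assumption~\ref{moments} secures the variance identity only for $j\le k$ (not $j\le 2k$), which still covers the cases $j\in\{1,k\}$ where the lemma is actually used.
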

The proof is included in \ref{sums_W}.

Next we relate the probability of an event $B$ conditioned on $\{ |\boldsymbol{Z}_{\infty}| > 0\} $ with the probability of the same event conditioned on $\{ |\boldsymbol{Z}_t| > 0 \}$, which will allow us to take advantage of Chebyshev's inequality.
\begin{lemma}\label{future_present}
 Under Assumptions \ref{infinite_matrix} and \ref{R-positive}, for any event $B$ in the relevant probability space,
    \begin{align*}
         \mathbb{P} \left(B \bigm| |\boldsymbol{Z}_{\infty}| > 0, \boldsymbol{Z}_t\right) &\le \frac{\mathbb{P}(B \bigm||\boldsymbol{Z}_{t}| > 0, \boldsymbol{Z}_t)}
    % denominator
    {1 - \prod_{i \in S}q_i^{\z}}.
    \end{align*}
\end{lemma}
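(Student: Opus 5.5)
The plan is to use the definition of conditional probability together with the branching property at generation $t$. First note that, given $\boldsymbol{Z}_t$, the event $\{|\boldsymbol{Z}_{\infty}|>0\}$ is contained in $\{|\boldsymbol{Z}_t|>0\}$, and it is the event that at least one of the independent subprocesses $\boldsymbol{Z}^i_{t,\cdot,s}$, $s\in[\z]$, $i\in S$, survives forever. By the decomposition \eqref{eq:Zdecomposition} and the independence of these subprocesses given $\boldsymbol{Z}_t$, their extinction events are conditionally independent with probabilities $q_i$. Hence
\begin{equation*}
\mathbb{P}\left(|\boldsymbol{Z}_{\infty}|>0 \bigm| \boldsymbol{Z}_t\right) = 1-\prod_{i\in S} q_i^{\z}
\end{equation*}
on $\{|\boldsymbol{Z}_t|>0\}$. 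This product is well defined even for countably many types, because only finitely many $\z$ are nonzero: $|\boldsymbol{Z}_t|<\infty$ almost surely, since its mean is finite by Assumption~\ref{infinite_matrix}.

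Next, write
\begin{equation*}
\mathbb{P}\left(B \bigm| |\boldsymbol{Z}_{\infty}|>0,\boldsymbol{Z}_t\right)=\frac{\mathbb{P}\left(B\cap\{|\boldsymbol{Z}_\infty|>0\}\bigm|\boldsymbol{Z}_t\right)}{\mathbb{P}\left(|\boldsymbol{Z}_\infty|>0\bigm|\boldsymbol{Z}_t\right)}.
\end{equation*}
Bound the numerator by $\mathbb{P}(B\cap\{|\boldsymbol{Z}_t|>0\}\mid \boldsymbol{Z}_t)$, using the inclusion of events. On $\{|\boldsymbol{Z}_t|>0\}$, which is $\boldsymbol{Z}_t$-measurable, this equals $\mathbb{P}(B\mid |\boldsymbol{Z}_t|>0,\boldsymbol{Z}_t)$. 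Dividing by the expression from the first step gives the claim.

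The main subtlety is making sense of conditioning on the $\sigma$-algebra generated by $\boldsymbol{Z}_t$ together with an event. I would formalise it as follows. For an event $A$ with positive conditional probability, define
\begin{equation*}
\mathbb{P}(B\mid A,\boldsymbol{Z}_t) := \frac{\mathbb{P}(B\cap A\mid\boldsymbol{Z}_t)}{\mathbb{P}(A\mid\boldsymbol{Z}_t)}.
\end{equation*}
This works because $\boldsymbol{Z}_t$ is discrete, so the definition reduces to elementary conditioning on each atom $\{\boldsymbol{Z}_t=\boldsymbol{z}\}$ with $|\boldsymbol{z}|>0$. Under Assumption~\ref{R-positive} we have $q_i<1$ for the relevant types, so the denominator $1-\prod_{i\in S}q_i^{\z}$ is positive there, and the inequality holds atom by atom.
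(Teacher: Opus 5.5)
Your proposal is correct and follows the paper's proof essentially step for step: condition additionally on $\{|\boldsymbol{Z}_t|>0\}$ (using $\{|\boldsymbol{Z}_\infty|>0\}\subset\{|\boldsymbol{Z}_t|>0\}$), write the conditional probability as a ratio given $\boldsymbol{Z}_t$, drop the survival event from the numerator, and identify the denominator as $1-\prod_{i\in S}q_i^{\z}$ via the independence of the subfamilies in \eqref{eq:Zdecomposition}. One small aside: Assumption~\ref{infinite_matrix} only makes each entry $m_{ij}^{(t)}$ finite, not the row sum $\mathbb{E}|\boldsymbol{Z}_t|$, but you do not need finite mean---a product of factors in $[0,1]$ is always well defined, and finiteness of $|\boldsymbol{Z}_t|$ is implicit in the model, as in the paper.
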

\begin{proof}
Consider the probability %of an event $B$ conditioned on the population configuration at time $t$ and $\{|\boldsymbol{Z}_{\infty}| > 0\}$ 
\begin{align}
    \mathbb{P}\left(B \bigm| |\boldsymbol{Z}_{\infty}| > 0, \boldsymbol{Z}_t\right) %\nonumber \\
    &=\mathbb{P}(B \bigm||\boldsymbol{Z}_{\infty}| > 0,|\boldsymbol{Z}_{t}| > 0, \boldsymbol{Z}_t) \label{more_conditioning} \\
    &=\frac{\mathbb{P}(B,|\boldsymbol{Z}_{\infty}| > 0\bigm||\boldsymbol{Z}_{t}| > 0, \boldsymbol{Z}_t)}
    % denominator
    {\mathbb{P}(|\boldsymbol{Z}_{\infty}| > 0\bigm||\boldsymbol{Z}_{t}| > 0, \boldsymbol{Z}_t)}  \label{eq:fraction},
\end{align}
where (\ref{more_conditioning}) follows from the fact that $\{|\boldsymbol{Z}_{\infty}| > 0\}\subset\{|\boldsymbol{Z}_t| > 0\}$ up to null sets. Now, by the law of total probability %$\mathbb{P}(B|\mathcal{G})\ge \mathbb{P}(A, B| \mathcal{G})$ 
applied to \eqref{eq:fraction},
\begin{align}
\frac{\mathbb{P}(B,|\boldsymbol{Z}_{\infty}| > 0\bigm||\boldsymbol{Z}_{t}| > 0, \boldsymbol{Z}_t)}
    % denominator
    {\mathbb{P}(|\boldsymbol{Z}_{\infty}| > 0\bigm||\boldsymbol{Z}_{t}| > 0, \boldsymbol{Z}_t)} %\nonumber \\
    &\le \frac{\mathbb{P}(B\bigm||\boldsymbol{Z}_{t}| > 0, \boldsymbol{Z}_t)}
    % denominator
    {\mathbb{P}(|\boldsymbol{Z}_{\infty}| > 0\bigm||\boldsymbol{Z}_{t}| > 0, \boldsymbol{Z}_t)} %\notag \\
    = \frac{\mathbb{P}(B\bigm||\boldsymbol{Z}_{t}| > 0, \boldsymbol{Z}_t)}
    % denominator
    {1 - \prod_{i \in S}q_i^{\z}} \label{before_lln},
\end{align}
where the equality in \eqref{before_lln} follows from the definition of extinction probabilities $\boldsymbol{q}$. 
\end{proof}
Setting %$B=\left\{\left|\sum_{i \in S}\sum_{s=1}^{\z}\frac{(W^{(i)}_{t, s})^j}{|\boldsymbol{Z}_t|}-\frac{\sum_{i \in S}\z \mathbb{E}((W^{(i)})^j)}{|\boldsymbol{Z}_t|}\right| \ge\epsilon\right\}$
$B=\left\{\left|\V{j}-\mathbb{E}\left(\V{j}\bigm|  \boldsymbol{Z}_t\right)\right| \ge\epsilon\right\}$, the following corollary is immediate.
\begin{corollary}
For any $\epsilon > 0$,
      \begin{align}
      %   &\mathbb{P} \left(\left|\sum_{i \in S}\sum_{s=1}^{\z}\frac{(W^{(i)}_{t, s})^j}{|\boldsymbol{Z}_t|}-\frac{\sum_{i \in S}\z \mathbb{E}((W^{(i)})^j)}{|\boldsymbol{Z}_t|} \right|\ge\epsilon\bigm\vert |\boldsymbol{Z}_{\infty}| > 0, \boldsymbol{Z}_t\right) \label{eq:z_infinity}\\
      %   &\le \frac{\mathbb{P}(|\sum_{i \in S}\sum_{s=1}^{\z}\frac{(W^{(i)}_{t, s})^j}{|\boldsymbol{Z}_t|}-\frac{\sum_{i \in S}\z \mathbb{E}((W^{(i)})^j)}{|\boldsymbol{Z}_t|} |\ge\epsilon\bigm||\boldsymbol{Z}_{t}| > 0, \boldsymbol{Z}_t)}
    % denominator
    %{1 - \prod_{i \in S}q_i^{\z}}.\label{eq:before_chebyshev}
    \mathbb{P} \left(\left|\V{j}-\mathbb{E}\left(\V{j}\bigm|  \boldsymbol{Z}_t\right) \right|\ge\epsilon\bigm\vert |\boldsymbol{Z}_{\infty}| > 0, \boldsymbol{Z}_t\right) %\label{eq:z_infinity}\\
         &\le \frac{\mathbb{P}(|\V{j}-\mathbb{E}\left(\V{j}\bigm|  \boldsymbol{Z}_t\right) |\ge\epsilon\bigm||\boldsymbol{Z}_{t}| > 0, \boldsymbol{Z}_t)}
    % denominator
    {1 - \prod_{i \in S}q_i^{\z}}.\label{eq:before_chebyshev}
    \end{align}
\end{corollary}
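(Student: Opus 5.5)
This corollary follows directly from Lemma~\ref{future_present} once we specify the event. Fix $j\in\mathbb{N}_+$ and $\epsilon>0$, and set
\[
B=\left\{\left|\V{j}-\mathbb{E}\left(\V{j}\bigm| \boldsymbol{Z}_t\right)\right|\ge\epsilon\right\}.
\]
Substituting this $B$ into Lemma~\ref{future_present} gives exactly \eqref{eq:before_chebyshev}.

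The one point to check is that $B$ is a legitimate event, so that the lemma applies. It must be measurable with respect to the $\sigma$-algebra generated by $\boldsymbol{Z}_t$ and the family limits $\{W^{(i)}_{t,s}\}$.

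\begin{itemize}
\item $\V{j}$ is a countable sum of non-negative random variables, each $W^{(i)}_{t,s}$ being an $L^2$ limit. It is therefore a well-defined extended-real random variable on $\{|\boldsymbol{Z}_t|>0\}$, where its denominator does not vanish.
\item $\mathbb{E}(\V{j}\mid \boldsymbol{Z}_t)$ is $\sigma(\boldsymbol{Z}_t)$-measurable.
\item Lemma~\ref{lemma_moments} identifies this conditional expectation explicitly on $\{|\boldsymbol{Z}_t|>0\}$.
\end{itemize}

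This last point matters for the conditioning in the corollary. The right-hand side conditions on $\{|\boldsymbol{Z}_t|>0\}$, while the left-hand side conditions only on $\boldsymbol{Z}_t$. Since $\{|\boldsymbol{Z}_\infty|>0\}\subseteq\{|\boldsymbol{Z}_t|>0\}$, the two expressions $\mathbb{E}(\V{j}\mid\boldsymbol{Z}_t)$ and $\mathbb{E}(\V{j}\mid\boldsymbol{Z}_t,|\boldsymbol{Z}_t|>0)$ agree on the relevant set. Hence it does not matter which version appears inside $B$.

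With this in place, the inequality is immediate from the lemma, and no step is a real obstacle. The only care needed is to note that $\V{j}$ and its conditional mean are defined on $\{|\boldsymbol{Z}_t|>0\}$, which contains the conditioning event almost surely.
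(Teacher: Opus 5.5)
Your proposal is correct and matches the paper's argument: the paper also obtains the corollary immediately by substituting $B=\{|V_j-\mathbb{E}(V_j\mid \boldsymbol{Z}_t)|\ge\epsilon\}$ into Lemma~\ref{future_present}. Your extra remarks are sound. They note that $V_j$ is well defined on $\{|\boldsymbol{Z}_t|>0\}$, and that the two versions of the conditional mean agree there because $\{|\boldsymbol{Z}_t|>0\}\in\sigma(\boldsymbol{Z}_t)$; this spells out what the paper leaves implicit.
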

%\subsection{Chebyshev's inequality}
Now we can apply Chebyshev's inequality to the numerator of the right-hand side of \eqref{eq:before_chebyshev} to obtain a further upper bound for the left-hand side. 
\begin{lemma}\label{chebyshev}
    Under Assumptions \assumptionsHS, for any $\epsilon > 0$, $j \in \mathbb{N}_+$, it is true that
    \begin{align*}
    %\mathbb{P}\left(\left|\sum_{i \in S}\sum_{s=1}^{\z}\frac{(W^{(i)}_{t, s})^j}{|\boldsymbol{Z}_t|}-\frac{\sum_{i \in S}\z \mathbb{E}((W^{(i)})^j)}{|\boldsymbol{Z}_t|} \right|\ge\epsilon \bigm| | \boldsymbol{Z}_{\infty}| > 0, \boldsymbol{Z}_t\right) %\\
    %& \le \frac{\sup_{i \in S}{\Var((W^{(i)})^j)}}{|\boldsymbol{Z}_t| \epsilon^2(1 - \sup_{i \in S}q_i)}.
    \mathbb{P}\left(\left|\V{j}- \mathbb{E}\left(\V{j}\bigm|  \boldsymbol{Z}_t\right) \right|\ge\epsilon \bigm| | \boldsymbol{Z}_{\infty}| > 0, \boldsymbol{Z}_t\right) %\\
    & \le \frac{\sup_{i \in S}{\Var((W^{(i)})^j)}}{|\boldsymbol{Z}_t| \epsilon^2(1 - \sup_{i \in S}q_i)}.
\end{align*}
\end{lemma}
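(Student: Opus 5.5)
The plan is to chain the preceding Corollary (inequality \eqref{eq:before_chebyshev}) with a conditional Chebyshev bound, and then control the denominator $1-\prod_{i\in S} q_i^{\z}$ using Assumption \ref{ass:extinct}.

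\emph{Step 1: conditional Chebyshev.} Conditional on $\boldsymbol{Z}_t$ with $|\boldsymbol{Z}_t|>0$, the variables $W^{(i)}_{t,s}$ are independent. Their laws are those of $W^{(i)}$, and they are independent of $\boldsymbol{Z}_t$ by the branching property \eqref{eq:Zdecomposition}. Chebyshev's inequality applied to $\V{j}$ under $\mathbb{P}(\cdot \mid |\boldsymbol{Z}_t|>0,\boldsymbol{Z}_t)$, together with the conditional variance identity \eqref{eq:condVidentity} of Lemma \ref{lemma_moments}, gives
\begin{equation*}
\mathbb{P}\left(\left|\V{j}-\mathbb{E}\left(\V{j}\bigm|\boldsymbol{Z}_t\right)\right|\ge\epsilon \bigm| |\boldsymbol{Z}_t|>0,\boldsymbol{Z}_t\right)\le \frac{\sum_{i\in S}\z\Var((W^{(i)})^j)}{\epsilon^2|\boldsymbol{Z}_t|^2}\le \frac{\sup_{i\in S}\Var((W^{(i)})^j)}{\epsilon^2|\boldsymbol{Z}_t|},
\end{equation*}
where the last step uses $\sum_{i\in S}\z=|\boldsymbol{Z}_t|$. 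The variances are finite by Assumption \ref{moments} whenever $2j\le 2k$, which covers the cases $j\in\{1,k\}$ used later. For larger $j$ the right-hand side may be infinite, and the bound then holds trivially.

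\emph{Step 2: the denominator.} On $\{|\boldsymbol{Z}_t|>0\}$ at least one $\z\ge 1$, so
\begin{equation*}
\prod_{i\in S}q_i^{\z}\le \max_{i:\,\z\ge1} q_i\le \sup_{i\in S}q_i.
\end{equation*}
The first inequality holds because every factor lies in $[0,1]$, and $q_i^{n}\le q_i$ for $n\ge1$. It follows that $1-\prod_{i\in S}q_i^{\z}\ge 1-\sup_{i\in S}q_i>0$ by Assumption \ref{ass:extinct}.

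\emph{Step 3: combine.} Substituting Steps 1 and 2 into \eqref{eq:before_chebyshev} gives the claim.

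The main point needing care is Step 1: justifying that conditional on $\boldsymbol{Z}_t$ the $W^{(i)}_{t,s}$ are independent with the unconditional laws. This is the same fact underlying Lemma \ref{lemma_moments}, so I would simply invoke that lemma. A minor secondary issue is that the conditioning in \eqref{eq:before_chebyshev} uses $\mathbb{E}(\V{j}\mid\boldsymbol{Z}_t)$, which coincides with the expression in \eqref{eq:condEidentity} on $\{|\boldsymbol{Z}_t|>0\}$. The statement concerns only that event, since $\{|\boldsymbol{Z}_\infty|>0\}\subset\{|\boldsymbol{Z}_t|>0\}$.
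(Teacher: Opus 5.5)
Your proposal is correct and follows essentially the same route as the paper: conditional Chebyshev using the variance identity \eqref{eq:condVidentity}, the bound $\sum_{i \in S}\z\Var((W^{(i)})^j)\le|\boldsymbol{Z}_t|\sup_{i\in S}\Var((W^{(i)})^j)$, and then \eqref{eq:before_chebyshev} together with $1-\prod_{i\in S}q_i^{\z}\ge 1-\sup_{i\in S}q_i$. Your write-up is somewhat more careful than the paper's in two places, but the argument is the same: you justify that last inequality explicitly (on $\{|\boldsymbol{Z}_t|>0\}$ at least one $\z\ge 1$), and you note that the bound holds trivially when $\sup_{i\in S}\Var((W^{(i)})^j)=\infty$.
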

\begin{proof}
   For $\epsilon > 0$ consider conditioned Chebyshev's inequality (e.g.\ \cite[Lemma 5.1, p.\ 102]{kallenberg}) for a random variable $V$ with a finite expectation $\mathbb{E}(V)$ and variance $\Var(V)$ and a sub-$\sigma$-algebra $\mathcal{G}$:
\begin{equation*}
    \mathbb{P}(|V-\mathbb{E}(V\vert \mathcal{G})| > \epsilon \bigm| \mathcal{G}) \le \frac{\Var(V \bigm| \mathcal{G})}{\epsilon^2},
\end{equation*}
Under Assumption \ref{moments}, we can set $V = \V{j}$, $\mathcal{G} = \sigma(\boldsymbol{Z}_t)$, and restrict to $\{\vert \boldsymbol{Z}_t\vert > 0\}$. Now, we have by Lemma \ref{lemma_moments}
\begin{align*}
    \mathbb{E}(V \bigm| \mathcal{G}, \{\vert \boldsymbol{Z}_t\vert > 0\}) &= \frac{\sum_{i \in S}\z \mathbb{E}((W^{(i)})^j)}{|\boldsymbol{Z}_t|}  \\
    % variance
    \text{ and } \quad \Var(V\bigm| \mathcal{G}, \{\vert \boldsymbol{Z}_t\vert > 0\}) &= \frac{\sum_{i \in S}\z \Var((W^{(i)})^j) }{|\boldsymbol{Z}_t|^2}.
\end{align*}
Chebyshev's inequality applied to the numerator of expression \eqref{eq:before_chebyshev} yields:
\begin{align}
    &\mathbb{P}(|\V{j}- \mathbb{E}\left(\V{j}\bigm|  \boldsymbol{Z}_t\right) |\ge\epsilon\bigm||\boldsymbol{Z}_{t}| > 0, \boldsymbol{Z}_t)\nonumber \\
    &\le \frac{\sum_{i \in S}\z \Var((W^{(i)})^j)}{|\boldsymbol{Z}_t|^2 \epsilon^2 } 
    \le \frac{\sup_{i \in S}{\Var((W^{(i)})^j)}\sum_{i \in S}\z}{|\boldsymbol{Z}_t|^2 \epsilon^2}
    =\frac{\sup_{i \in S}{\Var((W^{(i)})^j)}}{|\boldsymbol{Z}_t| \epsilon^2}.\label{with_type}
\end{align}
Hence \eqref{eq:before_chebyshev} becomes
\begin{align*}
    %&\mathbb{P}(|\sum_{i \in S}\sum_{s=1}^{\z}\frac{(W^{(i)}_{t,s})^j}{|\boldsymbol{Z}_t|}-\frac{\sum_{i \in S}\z \mathbb{E}((W^{(i)})^j)}{|\boldsymbol{Z}_t|} |\ge\epsilon|\boldsymbol{Z}_{\infty} > 0, \boldsymbol{Z}_t)\nonumber \\
    %&\le\frac{\mathbb{P}(|\sum_{i \in S}\sum_{s=1}^{\z}\frac{(W^{(i)}_{t,s})^j}{|\boldsymbol{Z}_t|}-\frac{\sum_{i \in S}\z \mathbb{E}((W^{(i)})^j)}{|\boldsymbol{Z}_t|} |\ge\epsilon\bigm||\boldsymbol{Z}_{t}| > 0, \boldsymbol{Z}_t)}
    %% denominator
    %{1 - \prod_{i \in S}q_i^{\z}} \nonumber \\
    \mathbb{P}(| \V{j}-\mathbb{E}\left(\V{j}\bigm|  \boldsymbol{Z}_t\right) |\ge\epsilon|\boldsymbol{Z}_{\infty} > 0, \boldsymbol{Z}_t) &\le\frac{\mathbb{P}(|\V{j}- \mathbb{E}\left(\V{j}\bigm|  \boldsymbol{Z}_t\right) |\ge\epsilon\bigm||\boldsymbol{Z}_{t}| > 0, \boldsymbol{Z}_t)}
    {1 - \prod_{i \in S}q_i^{\z}} \nonumber \\
    &\le \frac{\sup_{i \in S}{\Var((W^{(i)})^j)}}{|\boldsymbol{Z}_t| \epsilon^2(1 - \prod_{i \in S}q_i^{\z})}
    \le \frac{\sup_{i \in S}{\Var((W^{(i)})^j)}}{|\boldsymbol{Z}_t| \epsilon^2(1 - \sup_{i \in S}q_i)} \label{eq:dependency_population}
\end{align*}
    which is well defined under Assumptions \ref{ass:extinct} and \ref{ass:E_W}.
%where in \eqref{eq:dependency_population} we removed the dependence on the number of each type by bounding the product of extinction probabilities by the maximum extinction probability.
\end{proof}
For $j \in \{1, k\}$ and $\epsilon > 0$, consider the intersection of events where $\V{j}$ is close to its expected value for both values of $j$:
\begin{equation*}
    \A = \bigcap_{j\in \{1,k\}} \left\{|\V{j} - \mathbb{E}\left(\V{j}\bigm|  \boldsymbol{Z}_t\right) |<\epsilon \right\}.
\end{equation*} 
Let us return to (\ref{beforeLLN}) and decompose the expectation over $\A$ and $\A[C]$:
\begin{align}
    \mathbb{E}\Bigg(\mathbb{E}\bigg(&\frac{\V{k}} 
    {(|\boldsymbol{Z}_t|^{\frac{k-1}{k}}\V{1})^k}\bigm| |\boldsymbol{Z}_{\infty}| > 0, \boldsymbol{Z}_t  \bigg)  \bigm| |\boldsymbol{Z}_{\infty}| > 0\Bigg) \\
    = \mathbb{E}\Bigg(\mathbb{E}&\bigg(\frac{\V{k}}
    {(|\boldsymbol{Z}_t|^{\frac{k-1}{k}}\V{1})^k} \mathbbm{1}_{\A} + % Other event
    \frac{\V{k}}
    {(|\boldsymbol{Z}_t|^{\frac{k-1}{k}}\V{1})^k} \mathbbm{1}_{\A[C]}
    %conditioning
    \bigm| |\boldsymbol{Z}_{\infty}| > 0,\boldsymbol{Z}_t  \bigg)\bigm| |\boldsymbol{Z}_{\infty}| > 0\Bigg) \label{before_bound}.
    \end{align}
As shown in \ref{less1}:
    \begin{equation}
         % Other event
      \frac{\V{k}}
    {(|\boldsymbol{Z}_t|^{\frac{k-1}{k}}\V{1})^k} \le 1 \label{less_than1}
    %conditioning
%    \bigm| |\boldsymbol{Z}_{\infty}| > 0,\boldsymbol{Z}_t  \bigg)\bigm| |\boldsymbol{Z}_{\infty}| > 0\Bigg) 
    \end{equation}
    almost surely on the event $\{|\boldsymbol{Z}_{\infty}| > 0\}$.
    
By Assumption~\ref{ass:E_W}, we can define $\epsilon \in (0, \inf_{i \in S} \mathbb{E}(W^{(i)}))$ and apply inequality \eqref{less_than1} to \eqref{before_bound} to get 
    \begin{align}
         \mathbb{E}\Bigg(\mathbb{E}&\bigg(\frac{\V{k}}
    {(|\boldsymbol{Z}_t|^{\frac{k-1}{k}}\V{1})^k} \mathbbm{1}_{\A} + % Other event
    \frac{\V{k}}
    {(|\boldsymbol{Z}_t|^{\frac{k-1}{k}}\V{1})^k} \mathbbm{1}_{\A[C]}
    %conditioning
    \bigm| |\boldsymbol{Z}_{\infty}| > 0,\boldsymbol{Z}_t  \bigg)\bigm| |\boldsymbol{Z}_{\infty}| > 0\Bigg)\notag\\
    % First inequality
     \le \mathbb{E}&\left(\frac{\mathbb{E}\left(\V{k}\bigm|  \boldsymbol{Z}_t\right)+\epsilon}
    % Denominator
    {\left(|\boldsymbol{Z}_t|^{\frac{k-1}{k}}\left(\mathbb{E}\left(\V{1}\bigm|  \boldsymbol{Z}_t\right)-\epsilon\right)\right)^k}\bigm| |\boldsymbol{Z}_{\infty}| > 0\right)+ 
    % Expectation
    \mathbb{E}\left(\mathbb{E}(\mathbbm{1}_{A^C_t}\bigm| |\boldsymbol{Z}_{\infty}| > 0, \boldsymbol{Z}_t) \bigm| |\boldsymbol{Z}_{\infty}| > 0\right)\label{eq:close_to_mean} \\
    % minimum, maximum replacement
     \le \mathbb{E}&\left(\frac{\sup\limits_{i \in S}\mathbb{E}((W^{(i)})^k)+\epsilon}
    % Denominator
    {\left(|\boldsymbol{Z}_t|^{\frac{k-1}{k}}(\inf\limits_{i \in S}\mathbb{E}(W^{(i)})-\epsilon)\right)^k}\bigm| |\boldsymbol{Z}_{\infty}| > 0\right)+ 
    % Expectation
    \mathbb{E}\left(\mathbb{P}(A^C_t\bigm| |\boldsymbol{Z}_{\infty}| > 0, \boldsymbol{Z}_t)\bigm| |\boldsymbol{Z}_{\infty}| > 0\right), \label{min_max} 
\end{align}%
where (\ref{min_max}) is obtained by upper bounding $\sum_{i \in S}\z \mathbb{E}((W^{(i)})^k)$ by $\sum_{i \in S}\z \sup_{i \in S}\mathbb{E}((W^{(i)})^k)$ and lower bounding $\sum_{i \in S}\z \mathbb{E}(W^{(i)})$ by $\sum_{i \in S}\z \inf_{i \in S}\mathbb{E}(W^{(i)})$.

Now, the second term in \eqref{min_max} can be upper bounded since by Lemma \ref{chebyshev} we have %the probability of the complement of event $\A$ is bounded by
\begin{equation}
    \mathbb{P}(\A[C]|\boldsymbol{Z}_t, |\boldsymbol{Z}_{\infty}| > 0) \le \sum_{j \in \{1,k\}}\frac{\sup_{i \in S}{\Var((W^{(i)})^j)}}{|\boldsymbol{Z}_t| \epsilon^2(1 - \sup_{i \in S}q_i)}.  \label{complement}
\end{equation}
Hence,
    \begin{align}
    % minimum, maximum replacement
    \mathbb{E}\Bigg(&\frac{\sup\limits_{i \in S}\mathbb{E}((W^{(i)})^k)+\epsilon}
    % Denominator
    {\left(|\boldsymbol{Z}_t|^{\frac{k-1}{k}}(\inf\limits_{i \in S}\mathbb{E}(W^{(i)})-\epsilon)\right)^k}\bigm| |\boldsymbol{Z}_{\infty}| > 0\Bigg)+ 
    % Expectation
    \mathbb{E}\left(\mathbb{P}(A^C_t\bigm| |\boldsymbol{Z}_{\infty}| > 0, \boldsymbol{Z}_t)\bigm| |\boldsymbol{Z}_{\infty}| > 0\right) \notag\\
        % Probability unlikely event replacement
    \le \mathbb{E}\Bigg(&\frac{\sup\limits_{i \in S}\mathbb{E}((W^{(i)})^k)+\epsilon}
    % Denominator
    {|\boldsymbol{Z}_t|^{k-1}(\inf\limits_{i \in S}\mathbb{E}(W^{(i)})-\epsilon)^k}\bigm| |\boldsymbol{Z}_{\infty}| > 0\Bigg)
    % unlikely event bit
    + \mathbb{E}\left(\sum_{j \in \{1, k\}}\frac{\sup\limits_{i \in S}{\Var((W^{(i)})^j)}}{|\boldsymbol{Z}_t| \epsilon^2(1 - \sup\limits_{i \in S}q_i)}\bigm| |\boldsymbol{Z}_{\infty}| > 0\right) \notag  \\
    % Simplifying
    = \mathbb{E}\Bigg(&\frac{1}{|\boldsymbol{Z}_t|^{k-1}}\bigm||\boldsymbol{Z}_{\infty}| > 0\Bigg)\left(\frac{\sup_{i \in S}\mathbb{E}((W^{(i)})^k)+\epsilon}
    % Denominator
    {(\inf_{i \in S}\mathbb{E}(W^{(i)})-\epsilon)^k}\right) \notag\\
    &+ \mathbb{E}\Bigg(\frac{1}{|\boldsymbol{Z}_t|}\bigm||\boldsymbol{Z}_{\infty}| > 0\Bigg) \left(
    % bad event bit
    \sum_{j \in \{1,k\}}\frac{\sup_{i \in S}{\Var((W^{(i)})^j)}}{ \epsilon^2(1 - \sup_{i \in S}q_i)}\right).
 \label{last_with_power}
\end{align}
To simplify things further, we can relate the $(k-1)$th harmonic moment of $|\boldsymbol{Z}_t|$ to its first harmonic moment. For $k \geq 2$:
\begin{equation}
    \mathbb{E}\Bigg(\frac{1}{|\boldsymbol{Z}_t|}\bigm||\boldsymbol{Z}_{\infty}| > 0\Bigg) \ge  \mathbb{E}\Bigg(\frac{1}{|\boldsymbol{Z}_t|^{k-1}}\bigm||\boldsymbol{Z}_{\infty}| > 0\Bigg), \label{first_and_kth}
\end{equation}
leading to the simpler bound
%Equation (\ref{first_and_kth}) allows us to simplify (\ref{last_with_power}) as follows
\begin{align}
    \mathbb{E}\Bigg(&\frac{1}{|\boldsymbol{Z}_t|^{k-1}}\bigm||\boldsymbol{Z}_{\infty}| > 0\Bigg)\left(\frac{\sup_{i \in S}\mathbb{E}((W^{(i)})^k)+\epsilon}
    % Denominator
    {(\inf_{i \in S}\mathbb{E}(W^{(i)})-\epsilon)^k}\right) \notag\\
    &+ \mathbb{E}\Bigg(\frac{1}{|\boldsymbol{Z}_t|}\bigm||\boldsymbol{Z}_{\infty}| > 0\Bigg) \left(
    % bad event bit
    \sum_{j \in \{1,k\}}\frac{\sup_{i \in S}{\Var((W^{(i)})^j)}}{ \epsilon^2(1 - \sup_{i \in S}q_i)}\right) \notag\\
    \le \mathbb{E}\Bigg(&\frac{1}{|\boldsymbol{Z}_t|}\bigm||\boldsymbol{Z}_{\infty}| > 0\Bigg)\left(\frac{\sup_{i \in S}\mathbb{E}((W^{(i)})^k)+\epsilon}
    % Denominator
    {(\inf_{i \in S}\mathbb{E}(W^{(i)})-\epsilon)^k}
    % bad event bit
   + \sum_{j \in \{1,k\}}\frac{\sup_{i \in S}{\Var((W^{(i)})^j)}}{ \epsilon^2(1 - \sup_{i \in S}q_i)}\right). \label{eq:final_bound}
\end{align}
Combining \eqref{equal_expression}, \eqref{beforeLLN}, \eqref{before_bound}, \eqref{min_max}, \eqref{last_with_power}, and \eqref{eq:final_bound} leads to the right-hand inequality in the statement of Theorem \ref{harmonic_thm}.

The proof for the lower bound is analogous. In particular,
\begin{align}
  \mathbb{E}\Bigg(\mathbb{E}&\bigg(\frac{\V{k} }
    % denominator
    {(|\boldsymbol{Z}_t|^{\frac{k-1}{k}}\V{1})^k} \mathbbm{1}_{\A} + % Other event
    \frac{\V{k}}
    % denominator
    {(|\boldsymbol{Z}_t|^{\frac{k-1}{k}}\V{1})^k} \mathbbm{1}_{\A[C]}
    %conditioning
    \bigm| |\boldsymbol{Z}_{\infty}| > 0,\boldsymbol{Z}_t  \bigg)\bigm| |\boldsymbol{Z}_{\infty}| > 0\Bigg) \notag\\
    % first inequality
 &\ge \mathbb{E}\Bigg(\mathbb{E}\bigg(\frac{\V{k}}
    % denominator
    {(|\boldsymbol{Z}_t|^{\frac{k-1}{k}}\V{1})^k} \mathbbm{1}_{\A} \bigm| |\boldsymbol{Z}_{\infty}| > 0,\boldsymbol{Z}_t  \bigg)\bigm| |\boldsymbol{Z}_{\infty}| > 0\Bigg) \notag\\
    % After epsilon 
    &\ge \mathbb{E}\left(\mathbb{E}\left(\frac{\mathbb{E}\left(\V{k}\bigm|  \boldsymbol{Z}_t\right)-\epsilon}
    % Denominator
    {\left(|\boldsymbol{Z}_t|^{\frac{k-1}{k}}(\mathbb{E}\left(\V{1}\bigm|  \boldsymbol{Z}_t\right)+\epsilon)\right)^k}\mathbbm{1}_{\A}\bigm| |\boldsymbol{Z}_{\infty}| > 0,\boldsymbol{Z}_t  \right)\bigm| |\boldsymbol{Z}_{\infty}| > 0\right) \label{epsilon}\\
    % probability instead of expected value
    &=\mathbb{E}\left(\frac{\mathbb{E}\left(\V{k}\bigm|  \boldsymbol{Z}_t\right)-\epsilon}
    % Denominator
    {\left(|\boldsymbol{Z}_t|^{\frac{k-1}{k}}(\mathbb{E}\left(\V{1}\bigm|  \boldsymbol{Z}_t\right)+\epsilon)\right)^k}\mathbb{P}\left(\A\bigm| |\boldsymbol{Z}_{\infty}| > 0,\boldsymbol{Z}_t  \right)\bigm| |\boldsymbol{Z}_{\infty}| > 0\right), \notag
    \end{align}
where (\ref{epsilon}) holds for $\epsilon \in (0, \inf_{i \in S} \mathbb{E}((W^{(i)})^k))$.
    By subtracting \eqref{complement} from $1$, we obtain the lower bound %for the probability of the event $\A$ 
    \begin{equation}
        \mathbb{P}(\A|\boldsymbol{Z}_t, |\boldsymbol{Z}_{\infty}| > 0) \ge 1 -  \sum_{j \in \{1,k\}}\frac{\sup_{i \in S}{\Var((W^{(i)})^j)}}{|\boldsymbol{Z}_t| \epsilon^2(1 - \sup_{i \in S}q_i)}.
    \end{equation}
    In addition, we lower bound $\sum_{i \in S}\z \mathbb{E}((W^{(i)})^k)$ by $\sum_{i \in S}\z \inf_{i \in S}\mathbb{E}((W^{(i)})^k)$ and upper bound $\sum_{i \in S}\z \mathbb{E}(W^{(i)})$ by $\sum_{i \in S}\z \sup_{i \in S}\mathbb{E}(W^{(i)})$ to obtain
    \begin{align*}
    &\mathbb{E}\left(\frac{\mathbb{E}\left(\V{k}\bigm|  \boldsymbol{Z}_t\right)-\epsilon}
    % Denominator
    {\left(|\boldsymbol{Z}_t|^{\frac{k-1}{k}}(\mathbb{E}\left(\V{1}\bigm|  \boldsymbol{Z}_t\right)+\epsilon)\right)^k}\mathbb{P}\left(\A\bigm| |\boldsymbol{Z}_{\infty}| > 0,\boldsymbol{Z}_t  \right)\bigm| |\boldsymbol{Z}_{\infty}| > 0\right) \\
    &\ge \mathbb{E}\left(\frac{\inf_{i \in S}\mathbb{E}((W^{(i)})^k)-\epsilon}
    % Denominator
    {\left(|\boldsymbol{Z}_t|^{\frac{k-1}{k}}(\sup_{i \in S}\mathbb{E}(W^{(i)})+\epsilon)\right)^k}\left(1 -  \sum_{j \in \{1,k\}}\frac{\sup_{i \in S}{\Var((W^{(i)})^j)}}{|\boldsymbol{Z}_t| \epsilon^2(1 - \sup_{i \in S}q_i)}\right)\bigm| |\boldsymbol{Z}_{\infty}| > 0\right) \\
    &= \frac{\inf_{i \in S}\mathbb{E}((W^{(i)})^k)-\epsilon}{(\sup_{i \in S}\mathbb{E}(W^{(i)})+\epsilon)^k}\Bigg(\mathbb{E}\left(\frac{1}
    % Denominator
    {|\boldsymbol{Z}_t|^{k-1}} \bigm| |\boldsymbol{Z}_{\infty}| > 0\right) \\
    & \phantom{{}= \frac{\inf_{i \in S}\mathbb{E}((W^{(i)})^k)-\epsilon}{(\sup_{i \in S}\mathbb{E}(W^{(i)})+\epsilon)^k}} {}-  \sum_{j \in \{1,k\}}\frac{\sup_{i \in S}{\Var((W^{(i)})^j)}}{ \epsilon^2(1 - \sup_{i \in S}q_i)}\mathbb{E}\left(\frac{1}{|\boldsymbol{Z}_t|^k}\bigm| |\boldsymbol{Z}_{\infty}| > 0\right)\Bigg).
\end{align*}
\section{Proof of Theorem \ref{hs_thm}} \label{hs_section}
\subsection{Multi-type Harris--Sevastyanov transformation}
In Theorem \ref{harmonic_thm}, the coalescence time distribution depends on the harmonic moments of $|\boldsymbol{Z}_t|$. For large $t$, the computation of harmonic moments of the total population size is likely to be unfeasible. This motivates the derivation of a relationship between the harmonic moments of $|\boldsymbol{Z}_t|$ and those of the Harris--Sevastyanov transformation of the process at time $1$, which can be more easily computed numerically. 

In the rest of this section, for any $r \in \mathbb{N}_+$, the proof of Theorem \ref{hs_thm} proceeds as follows:
    \begin{enumerate}
    \item In Lemma \ref{t_and_infinity}, we relate the harmonic mean of $|\boldsymbol{Z}_t|^r$ conditioned on $|\boldsymbol{Z}_{\infty}| >0$ to the same quantity conditioned on $|\boldsymbol{Z}_{t}| >0$. 
    \item In Lemma \ref{generating}, we express the harmonic mean of $|\boldsymbol{Z}_t|^r$ in terms of the generating functions, $\boldsymbol{f}(\boldsymbol{s})$, of the process.
    \item In Lemma \ref{z_and_y_upper}, we upper bound the harmonic mean of $|\boldsymbol{Z}_t|^r$ in terms of the harmonic mean of  $|\boldsymbol{Y}_t|^r$ using Lemma \ref{generating}, where we recall that $\boldsymbol{Y}$ is the Harris--Sevastyanov transformed process. Analogously, in Lemma \ref{z_and_y_lower}, we will lower bound the harmonic mean of $|\boldsymbol{Z}_t|^r$ in terms of the reciprocal of the expected value of $|\boldsymbol{Y}_t|^{r}$.
    \item In Lemmata \ref{y_upper} and \ref{y_lower}, we derive the relationship between $|\boldsymbol{Y}_t|$ and $|\boldsymbol{Y}_1|$ .
\end{enumerate}
\begin{remark}
\cite[Inequalities (22)--(25)]{heyde1971} prove that, in the single type case, for the process $(Y_t)$ obtained by the Harris--Sevastyanov transformation:
\begin{equation*}
    \mathbb{E}\left(\frac{1}{Y_t}\right) \le \left(\mathbb{E}\left(\frac{1}{Y_1}\right)\right)^t.
\end{equation*}
We generalise this result to the multi-type case and the $r$th harmonic moment in Lemma \ref{y_upper}.
\end{remark}
\subsection{Conditioning on non-extinction at generation t}
First we relate the harmonic moment of $|\boldsymbol{Z}_t|$ conditioned on $|\boldsymbol{Z}_t| >0$ to that conditioned on $|\boldsymbol{Z}_{\infty}| > 0$.
\begin{lemma}\label{t_and_infinity}
For $r \in \mathbb{N}_+$, under Assumptions \assumptionsHS,
\begin{equation*}
   (1-\sup_{i \in S}q_i) \mathbb{E}\left(\frac{1}{|\boldsymbol{Z}_t|^{r}}\bigm| |\boldsymbol{Z}_{t}|>0\right) \le \mathbb{E}\left(\frac{1}{|\boldsymbol{Z}_t|^{r}}\bigm| |\boldsymbol{Z}_{\infty}|>0\right)\le \frac{\mathbb{E}\left(\frac{1}{|\boldsymbol{Z}_t|^r}\bigm| |\boldsymbol{Z}_t| >0\right)}{1 - \sup_{i \in S}q_i}.  \label{infinity_vs_t}
\end{equation*}
\end{lemma}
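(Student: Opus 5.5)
Both inequalities come from writing each conditional expectation as a ratio of unconditional quantities. Since $\{|\boldsymbol{Z}_{\infty}|>0\}\subseteq\{|\boldsymbol{Z}_t|>0\}$ up to null sets, and $|\boldsymbol{Z}_t|^{-r}$ is $\sigma(\boldsymbol{Z}_t)$-measurable and bounded by $1$ on $\{|\boldsymbol{Z}_t|>0\}$, the tower property gives
\begin{equation*}
\mathbb{E}\left(\frac{1}{|\boldsymbol{Z}_t|^{r}}\bigm| |\boldsymbol{Z}_{\infty}|>0\right)
=\frac{\mathbb{E}\left(|\boldsymbol{Z}_t|^{-r}\mathbbm{1}_{\{|\boldsymbol{Z}_t|>0\}}\,\mathbb{P}(|\boldsymbol{Z}_{\infty}|>0\mid \boldsymbol{Z}_t)\right)}{\mathbb{P}(|\boldsymbol{Z}_{\infty}|>0)}.
\end{equation*}
By the branching property \eqref{eq:Zdecomposition}, the families started at generation $t$ are independent, so $\mathbb{P}(|\boldsymbol{Z}_{\infty}|>0\mid\boldsymbol{Z}_t)=1-\prod_{i\in S}q_i^{Z_t^{i_0,i}}$. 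This is the same identity used in the proof of Lemma \ref{future_present}.

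\emph{Upper bound.} First bound the conditional survival probability in the numerator above by $1$. Then use
\begin{equation*}
\mathbb{P}(|\boldsymbol{Z}_{\infty}|>0)=\mathbb{E}\left(\mathbbm{1}_{\{|\boldsymbol{Z}_t|>0\}}\left(1-\prod_{i\in S}q_i^{Z_t^{i_0,i}}\right)\right)\ge (1-\sup_{i\in S}q_i)\,\mathbb{P}(|\boldsymbol{Z}_t|>0).
\end{equation*}
This holds because on $\{|\boldsymbol{Z}_t|>0\}$ at least one factor $q_i$ appears with exponent at least one, and every factor is at most $1$, so $\prod_{i\in S} q_i^{Z_t^{i_0,i}}\le\sup_{i\in S} q_i$. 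Dividing gives $\mathbb{E}(|\boldsymbol{Z}_t|^{-r}\mid |\boldsymbol{Z}_t|>0)/(1-\sup_{i\in S} q_i)$. Alternatively, one can apply Lemma \ref{future_present} directly, integrating the conditional-probability bound against $|\boldsymbol{Z}_t|^{-r}$.

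\emph{Lower bound.} In the numerator, use the same product bound in the other direction: on $\{|\boldsymbol{Z}_t|>0\}$,
\begin{equation*}
1-\prod_{i\in S}q_i^{Z_t^{i_0,i}}\ge 1-\sup_{i\in S}q_i.
\end{equation*}
In the denominator, use $\mathbb{P}(|\boldsymbol{Z}_{\infty}|>0)\le\mathbb{P}(|\boldsymbol{Z}_t|>0)$. Together these give
\begin{equation*}
\mathbb{E}\left(\frac{1}{|\boldsymbol{Z}_t|^{r}}\bigm| |\boldsymbol{Z}_{\infty}|>0\right)\ge(1-\sup_{i\in S}q_i)\,\mathbb{E}\left(\frac{1}{|\boldsymbol{Z}_t|^{r}}\bigm| |\boldsymbol{Z}_t|>0\right).
\end{equation*}

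The only delicate point is justifying $\prod_{i\in S}q_i^{Z_t^{i_0,i}}\le\sup_{i\in S}q_i$ when $d=\infty$. Here $|\boldsymbol{Z}_t|<\infty$ almost surely, because $\mathbb{E}|\boldsymbol{Z}_t|<\infty$ under Assumption \ref{infinite_matrix} (provided the row sums of $\boldsymbol{M}^t$ are finite). So the product has only finitely many nontrivial factors. Assumption \ref{ass:extinct} then ensures that $1-\sup_{i\in S} q_i>0$, so all the displayed divisions are legitimate.
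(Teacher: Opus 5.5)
Your proof is correct and is essentially the paper's argument. The paper also writes the harmonic moment given $\{|\boldsymbol{Z}_{\infty}|>0\}$ as a ratio with denominator $\mathbb{P}(|\boldsymbol{Z}_{\infty}|>0 \mid |\boldsymbol{Z}_t|>0)$. It bounds the survival term in the numerator by $1$ for the upper bound and by $1-\sup_{i\in S}q_i$ from below for the lower bound, with the reverse bounds on the denominator; the only difference is that it phrases everything conditionally on $\{|\boldsymbol{Z}_t|>0\}$ rather than as unconditional ratios.

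Two minor asides. First, your ``delicate point'' needs no finiteness of $|\boldsymbol{Z}_t|$, since a product of factors in $[0,1]$ is at most any one of them. Second, the suggested shortcut through Lemma~\ref{future_present} would not work as stated. That lemma conditions on $\boldsymbol{Z}_t$, so it says nothing about the $\sigma(\boldsymbol{Z}_t)$-measurable quantity $|\boldsymbol{Z}_t|^{-r}$. What is needed is the change in the law of $\boldsymbol{Z}_t$ between the two conditionings, which is exactly what your main ratio computation supplies.
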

\begin{proof}
Since $\{|\boldsymbol{Z}_{\infty}| >0\} \subset \{|\boldsymbol{Z}_{t}| >0\} $ up to null sets, we have
\begin{equation*}
    \mathbb{E}(\cdot\bigm| |\boldsymbol{Z}_{\infty}| >0) = \mathbb{E}(\cdot\bigm| |\boldsymbol{Z}_{\infty}| >0, |\boldsymbol{Z}_{t}| >0).
\end{equation*}
Let us condition the $r$th harmonic moment of $|\boldsymbol{Z}_t|$ additionally on non-extinction at time $t$:
    \begin{align}
        \mathbb{E}\left(\frac{1}{|\boldsymbol{Z}_t|^r}\bigm| |\boldsymbol{Z}_{\infty}| >0 \right) %\nonumber \\
        & = \mathbb{E}\left(\frac{1}{|\boldsymbol{Z}_t|^r}\bigm| |\boldsymbol{Z}_{\infty}| >0, |\boldsymbol{Z}_{t}| >0 \right)\nonumber  \\
        &=  \frac{1}{\mathbb{P}(|\boldsymbol{Z}_{\infty}| > 0 \bigm| |\boldsymbol{Z}_t |>0)}\mathbb{E}\left(\frac{1}{|\boldsymbol{Z}_t|^r} \mathbbm{1}(|\boldsymbol{Z}_{\infty}| >0)\bigm| |\boldsymbol{Z}_{t}| >0 \right)\label{eq:hmequality}  \\
        &\le\frac{1}{\mathbb{P}(|\boldsymbol{Z}_{\infty}| >0\bigm| |\boldsymbol{Z}_{t}| >0 )} \mathbb{E}\left(\frac{1}{|\boldsymbol{Z}_t|^r}\bigm| |\boldsymbol{Z}_{t}| >0 \right). \label{eq:hard_prob}
    \end{align}
%where \eqref{eq:hard_prob} follows from upper bounding the indicator function by 1. 
%Let us explore the first factor of \eqref{eq:hard_prob}, $\mathbb{P}(|\boldsymbol{Z}_{\infty}| >0\bigm| |\boldsymbol{Z}_{t}| >0 )$, further. Note that 
Further note that
% \begin{align}
%     &\mathbb{P}(|\boldsymbol{Z}_{\infty}| >0\bigm| |\boldsymbol{Z}_{t}| >0 ) \nonumber \\
%     & \sum_{l=0}^{\infty} \mathbb{P}(|\boldsymbol{Z}_t|=l \bigm| |\boldsymbol{Z}_t| > 0 ) \mathbb{P}(|\boldsymbol{Z}_{\infty}| > 0 \bigm| |\boldsymbol{Z}_t| = l, |\boldsymbol{Z}_t| > 0)  \nonumber\\
%     &=\sum_{l=0}^{\infty} \mathbb{P}(|\boldsymbol{Z}_t|=l \bigm| |\boldsymbol{Z}_t| > 0 ) (1 - \mathbb{P}(|\boldsymbol{Z}_{\infty}| > 0 \bigm| |\boldsymbol{Z}_t| = l, |\boldsymbol{Z}_t| > 0) )\\
%     &=\sum_{l=0}^{\infty} \mathbb{P}(|\boldsymbol{Z}_t|=l \bigm| |\boldsymbol{Z}_t| > 0 ) (1 - \mathbb{P}(|\boldsymbol{Z}_{\infty}| > 0 \bigm| |\boldsymbol{Z}_t| = 1, |\boldsymbol{Z}_t| > 0)^l )  \nonumber\\
%     &\ge\sum_{l=0}^{\infty} \mathbb{P}(|\boldsymbol{Z}_t|=l \bigm| |\boldsymbol{Z}_t| > 0 ) (1 - \sup_{i \in S} q_i^l )  \nonumber \\
%      &\ge\sum_{l=0}^{\infty} \mathbb{P}(|\boldsymbol{Z}_t|=l \bigm| |\boldsymbol{Z}_t| > 0 ) (1 - \sup_{i \in S} q_i )  \nonumber\\
%     &= 1 - \sup_{i \in S}q_i \label{less_than_hard_prob}  \nonumber
% \end{align}
\begin{equation}
 \mathbb{P}(|\boldsymbol{Z}_{\infty}| >0\bigm| |\boldsymbol{Z}_{t}| >0 ) = 1 - \mathbb{P}(|\boldsymbol{Z}_{\infty}| = 0\bigm| |\boldsymbol{Z}_{t}| >0 ) \ge 1 - \sup_{i \in S}q_i \label{less_than_hard_prob} 
 \end{equation}
since, for any configuration $\boldsymbol{Z}_t$, the probability of extinction is the product of at least one $q_i$ and every such product is smaller than the single largest extinction probability. Now combining (\ref{eq:hard_prob}) with (\ref{less_than_hard_prob}), we conclude that
    \begin{equation*}
        \mathbb{E}\left(\frac{1}{|\boldsymbol{Z}_t|^r}\bigm| |\boldsymbol{Z}_{\infty}| >0 \right) \le \frac{1}{1 -\sup_{i \in S}q_i} \mathbb{E}\left(\frac{1}{|\boldsymbol{Z}_t|^r }\bigm| |\boldsymbol{Z}_t| >0\right),
    \end{equation*}
    which is the upper bound in the statement of the lemma, which is well defined under Assumption~\ref{ass:extinct}.
    The proof for the lower bound is analogous. From \eqref{eq:hmequality} and the tower law:
  \begin{align*}
     \mathbb{E}\left(\frac{1}{|\boldsymbol{Z}_t|^{r}}\bigm| |\boldsymbol{Z}_{\infty}| >0 \right) %\nonumber\\
%        & = \mathbb{E}\left(\frac{1}{|\boldsymbol{Z}_t|^{r}}\bigm| |\boldsymbol{Z}_{\infty}| >0, |\boldsymbol{Z}_{t}| >0 \right)\nonumber  \\
 %       &=\frac{\mathbb{E}\left(\frac{1}{|\boldsymbol{Z}_t|^{r}}\mathbbm{1}_{\{|\boldsymbol{Z}_\infty| > 0\}}\bigm| |\boldsymbol{Z}_{t}| >0 \right)}{\mathbb{P}( |\boldsymbol{Z}_{\infty}| >0\bigm| |\boldsymbol{Z}_{t}| >0 )} \nonumber\\
        &= \frac{\mathbb{E}\left(\frac{1}{|\boldsymbol{Z}_t|^{r}}\mathbb{P}(|\boldsymbol{Z}_\infty| > 0\bigm| \boldsymbol{Z}_{t},|\boldsymbol{Z}_{t}| >0)\bigm| |\boldsymbol{Z}_{t}| >0 \right)}{\mathbb{P}( |\boldsymbol{Z}_{\infty}| >0\bigm| |\boldsymbol{Z}_{t}| >0 )}. \label{still_equal} 
        \end{align*}
  To lower bound $\mathbb{P}( |\boldsymbol{Z}_{\infty}| >0\bigm| \boldsymbol{Z}_t, |\boldsymbol{Z}_{t}| >0)$ we note that
        \begin{equation*}
            1 \ge \mathbb{P}( |\boldsymbol{Z}_{\infty}| >0\bigm| \boldsymbol{Z}_t,|\boldsymbol{Z}_t| > 0) = 1 - \mathbb{P}( |\boldsymbol{Z}_{\infty}| =0\bigm| \boldsymbol{Z}_t,|\boldsymbol{Z}_t| > 0) \ge 1-\sup_{i \in S}q_i.
        \end{equation*}
        Hence,
        \begin{align*}
     \mathbb{E}\left(\frac{1}{|\boldsymbol{Z}_t|^{r}}\bigm| |\boldsymbol{Z}_{\infty}| >0 \right) & \ge (1-\sup_{i \in S}q_i)\mathbb{E}\left(\frac{1}{|\boldsymbol{Z}_t|^{r}}\bigm| |\boldsymbol{Z}_{t}|>0\right).
\end{align*}

\end{proof}
\subsection{Population size after the Harris--Sevastyanov transformation}
First, let us relate the expectation of the total population size to its generating function. For $r \in \mathbb{N}_+$, consider the generating functions for the $r$th moments of entries of $\boldsymbol{Z}_t^{i_0}$ and $\boldsymbol{Y}_t^{i_0}$, denoted respectively by
\begin{equation*}
    f_{t,r}^{i_0}(\boldsymbol{s}) := \sum_{\substack{\boldsymbol{v} \in \mathbb{N}_0^{d} {}}} \mathbb{P}(\boldsymbol{Z}_t = \boldsymbol{v}) \prod_{i \in S} s_i^{(v_i^r)} \qquad \text{ and } 
    \qquad F_{t,r}^{i_0}(\boldsymbol{s}) := \sum_{\substack{\boldsymbol{v} \in \mathbb{N}_0^{d} {}}} \mathbb{P}(\boldsymbol{Y}_t = \boldsymbol{v}) \prod_{i \in S} s_i^{(v_i^r)}.
\end{equation*}
\begin{remark}
    With regard to the extinction probability, similarly to the generating functions $\boldsymbol{f}(\boldsymbol{s})$ of $\boldsymbol{Z}_1$ (see Proposition \ref{prop:compute_extinct}), we observe that for any $r \in \mathbb{N}_+$
    \begin{equation} \label{eq:fixed_point}
        f_{t,r}^{i_0}(\boldsymbol{q}) = q_{i_0}.
    \end{equation}
    The proof can be found in \ref{f_and_q}.
\end{remark}
\begin{lemma} \label{generating}
% Under Assumptions \ref{infinite_matrix}, \ref{R-positive} and \ref{moments}, f
For $r \in \mathbb{N}_+$, it holds that
\begin{equation*}
 \mathbb{E}\left(\frac{1}{|\boldsymbol{Z}_t|^r}\bigm| |\boldsymbol{Z}_t| > 0\right) = \int_0^1 \frac{f_{t, r}^{i_0}(s\boldsymbol{1})-f_t^{i_0}(\boldsymbol{0})}{1-f_t^{i_0}(\boldsymbol{0})} \frac{1}{s}ds.
\end{equation*}
\end{lemma}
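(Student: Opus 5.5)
The plan is to mimic the proof of Lemma \ref{gamma_lemma}, replacing the Gamma-integral representation of $n^{-r}$ by the elementary identity
\begin{equation*}
\frac{1}{n} = \int_0^1 s^{n-1}\,ds, \qquad n \in \mathbb{N}_+,
\end{equation*}
applied with $n = |\boldsymbol{Z}_t|^r$. Writing $N := |\boldsymbol{Z}_t|^r$, I expect to show
\begin{equation*}
\mathbb{E}\left(\frac{1}{N}\bigm| |\boldsymbol{Z}_t|>0\right) = \frac{1}{1-f_t^{i_0}(\boldsymbol{0})}\,\mathbb{E}\left(\mathbbm{1}_{\{|\boldsymbol{Z}_t|>0\}}\int_0^1 s^{N-1}\,ds\right).
\end{equation*}
Here I use that $\mathbb{P}(|\boldsymbol{Z}_t|>0) = 1-f_t^{i_0}(\boldsymbol{0})$, since $f_t^{i_0}(\boldsymbol{0}) = \mathbb{P}(\boldsymbol{Z}_t=\boldsymbol{0})$ under the convention $0^0=1$.

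The key steps are as follows.
\begin{enumerate}
\item Apply the integral identity pointwise on $\{|\boldsymbol{Z}_t|>0\}$.
\item Interchange expectation and integral by Tonelli, since the integrand $\mathbbm{1}_{\{|\boldsymbol{Z}_t|>0\}} s^{N-1}$ is nonnegative. This gives
\begin{equation*}
\int_0^1 \frac{1}{s}\,\mathbb{E}\left(s^{N}\mathbbm{1}_{\{|\boldsymbol{Z}_t|>0\}}\right)ds.
\end{equation*}
\item Identify the inner expectation. Expanding over configurations $\boldsymbol{v}\in\mathbb{N}_0^d$,
\begin{equation*}
\mathbb{E}\left(s^{N}\mathbbm{1}_{\{|\boldsymbol{Z}_t|>0\}}\right) = \sum_{\boldsymbol{v}\neq \boldsymbol{0}} \mathbb{P}(\boldsymbol{Z}_t=\boldsymbol{v})\, s^{|\boldsymbol{v}|^r}.
\end{equation*}
The $\boldsymbol{v}=\boldsymbol{0}$ term of the full sum is $\mathbb{P}(\boldsymbol{Z}_t=\boldsymbol{0})\,s^0 = f_t^{i_0}(\boldsymbol{0})$, again using $0^0=1$. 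So this quantity is the full generating-type sum evaluated at $s\boldsymbol{1}$ minus $f_t^{i_0}(\boldsymbol{0})$.
\item Recognise the full sum as $f_{t,r}^{i_0}(s\boldsymbol{1})$ and divide by $1-f_t^{i_0}(\boldsymbol{0})$, which yields the claimed formula.
\end{enumerate}
The $1/s$ factor poses no integrability problem. The numerator at $s$ is bounded by $s\,\mathbb{P}(|\boldsymbol{Z}_t|>0)$ for $s\in[0,1]$, because each exponent $|\boldsymbol{v}|^r\ge 1$ when $\boldsymbol{v}\neq\boldsymbol{0}$. Hence the integrand is bounded by $1$.

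The main obstacle is the identification in step 4. With the definition of $f_{t,r}^{i_0}$ as literally written, evaluating at $s\boldsymbol{1}$ produces the exponent $\sum_{i\in S} v_i^r$. This equals $|\boldsymbol{v}|^r$ when $r=1$, or when only one type is present in $\boldsymbol{v}$. In general, however, $\sum_i v_i^r \le |\boldsymbol{v}|^r$. I would therefore first make precise that the exponent intended in $f_{t,r}^{i_0}(s\boldsymbol{1})$ is the total $|\boldsymbol{v}|^r$, i.e.\ that $f_{t,r}^{i_0}(s\boldsymbol{1}) = \mathbb{E}(s^{|\boldsymbol{Z}_t|^r})$. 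Under this reading the argument above goes through verbatim.

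If instead one insists on the componentwise exponent, the same computation gives the formula with $|\boldsymbol{Z}_t|^r$ replaced by $\sum_i (Z_t^{i_0,i})^r$. That version only delivers an inequality ($\ge$) for the harmonic moment. This inequality would still suffice for the lower-bound direction used later in Lemma \ref{z_and_y_lower}.
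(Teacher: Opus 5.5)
Your argument is the same as the paper's proof: use $n^{-1}=\int_0^1 s^{n-1}\,ds$ with $n=|\boldsymbol{Z}_t|^r$, apply Tonelli, and remove the $\boldsymbol{v}=\boldsymbol{0}$ term. Your caveat is also correct: the paper asserts in \eqref{eq:definition} that $\sum_{v\ge 0}\mathbb{P}(|\boldsymbol{Z}_t|=v)s^{v^r}=f_{t,r}^{i_0}(s\boldsymbol{1})$ ``by definition'', whereas the literal componentwise definition gives $\mathbb{E}\bigl(s^{\sum_i (Z_t^{i_0,i})^r}\bigr)$ (for instance, if $\boldsymbol{Z}_1=(1,1)^\top$ almost surely and $r=2$, the left side is $1/4$ but the integral is $\int_0^1 s\,ds=1/2$), so the lemma needs your reading $f_{t,r}^{i_0}(s\boldsymbol{1})=\mathbb{E}(s^{|\boldsymbol{Z}_t|^r})$ unless $r=1$ or there is a single type. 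Your final remark is off, though: Lemma \ref{generating} is used only in Lemma \ref{z_and_y_upper}, not Lemma \ref{z_and_y_lower}, and the componentwise version makes the integral an \emph{upper} bound on the harmonic moment, which suits the first step there but makes the later identification of $\int_0^1 F_{t,r}^{i_0}(s\boldsymbol{1})s^{-1}\,ds$ with $\mathbb{E}(|\boldsymbol{Y}_t|^{-r})$ fail in the unfavourable direction.
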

\begin{proof}
For $r \in \mathbb{N}_+$, observe that the $r$th harmonic moment of $|\boldsymbol{Z}_t|$ can be expressed in terms of an integral of its generating functions:
\begin{align}
  \mathbb{E}\left(\frac{1}{|\boldsymbol{Z}_t|^r}\bigm| |\boldsymbol{Z}_{t}| > 0\right)%\nonumber \\
  &= \sum_{v =1}^{\infty} \mathbb{P}(|\boldsymbol{Z}_t| = v\bigm| |\boldsymbol{Z}_{t}| > 0) \frac{1}{v^r}\nonumber \\
  &=    \sum_{v =1}^{\infty} \mathbb{P}(|\boldsymbol{Z}_t| = v\bigm| |\boldsymbol{Z}_{t}| > 0) \int_0^1s^{v^r-1}  ds\nonumber \\
  &= \int_0^1   \sum_{v =1}^{\infty}  \frac{\mathbb{P}(|\boldsymbol{Z}_t| = v, |\boldsymbol{Z}_{t}| > 0)}{\mathbb{P}(|\boldsymbol{Z}_{t}| > 0)} s^{v^r} \frac{1}{s}  ds \label{eq:int_exchange}\\
    & =\int_0^1 \frac{1}{1 - \mathbb{P}(|\boldsymbol{Z}_{t}| = 0)} \frac{1}{s}\left( \left(\sum_{v =0}^{\infty} \mathbb{P}(|\boldsymbol{Z}_t| = v) s^{v^r} \right) - \mathbb{P}(|\boldsymbol{Z}_{t}| = 0)\right) ds \nonumber\\
  &= \int_0^1 \frac{f_{t, r}^{i_0}(s\boldsymbol{1})-f_t^{i_0}(\boldsymbol{0})}{1-f_t^{i_0}(\boldsymbol{0})} \frac{1}{s} ds, \label{eq:definition}
\end{align}
where in \eqref{eq:int_exchange} the sum and integral can be exchanged due to Tonelli's theorem (e.g., \cite[Theorem 1.29, p.\ 25]{kallenberg}) %\cite[Chapter 2]{tonelli}
as all quantities in the expression are positive, and \eqref{eq:definition} follows from the definition of $f_{t,r}^{i_0}$ and $f_t^{i_0}$. 
\end{proof}
Now Lemma \ref{generating} will be an important tool in upper bounding the harmonic moment of $|\boldsymbol{Z}_t|^r$ by that of $|\boldsymbol{Y}_t|^r$.
\begin{lemma} \label{z_and_y_upper}
Under Assumptions \ref{infinite_matrix}, \ref{R-positive}, and \ref{ass:extinct}, for $r \in \mathbb{N}_+$ it holds that
\begin{equation*}
  \mathbb{E}\left(\frac{1}{|\boldsymbol{Z}_t|^r}\bigm| |\boldsymbol{Z}_t| > 0\right) \le \frac{1-q_{i_0}}{    (1-\sup_{i \in S}q_i)(1-f_t^{i_0}(\boldsymbol{0}))  }\mathbb{E}\left(\frac{1}{|\boldsymbol{Y}_t|^r}\right).
   \end{equation*}
\end{lemma}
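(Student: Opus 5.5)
The plan is to argue probabilistically rather than through the integral representation of Lemma \ref{generating}. The idea is to realise $\boldsymbol{Y}_t$ as a thinning of $\boldsymbol{Z}_t$. Then $|\boldsymbol{Y}_t|\le|\boldsymbol{Z}_t|$ under a coupling, and the reciprocal moments compare in the right direction.

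\textbf{Step 1 (coupling).} On $\{|\boldsymbol{Z}_t|>0\}$, mark each individual alive at generation $t$ as \emph{surviving} if its line of descent never dies out. Given $\boldsymbol{Z}_t$, by the branching property \eqref{eq:Zdecomposition} an individual of type $i$ survives independently with probability $1-q_i$. Let $\tilde{\boldsymbol{Z}}_t$ be the vector of surviving individuals counted by type. Conditioning on $\boldsymbol{Z}_t$ and using independent thinning, its generating function is
\[
\mathbb{E}\bigl(\textstyle\prod_i s_i^{\tilde Z_t^{i}}\bigr)=f_t^{i_0}(\boldsymbol{s}\odot(\boldsymbol{1}-\boldsymbol{q})+\boldsymbol{q}).
\]
Moreover, $\{|\tilde{\boldsymbol{Z}}_t|>0\}=\{|\boldsymbol{Z}_\infty|>0\}$ up to null sets, and this event has probability $1-q_{i_0}$. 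Conditioning on it gives generating function
\[
\bigl(f_t^{i_0}(\boldsymbol{s}\odot(\boldsymbol{1}-\boldsymbol{q})+\boldsymbol{q})-q_{i_0}\bigr)/(1-q_{i_0}),
\]
because the value at $\boldsymbol{s}=\boldsymbol{0}$ is $f^{i_0}_t(\boldsymbol{q})=q_{i_0}$ by the Remark after Proposition \ref{prop:compute_extinct}. By Lemma \ref{lemma:iterates} this is exactly $F_t^{i_0}$. Hence the law of $\tilde{\boldsymbol{Z}}_t$ given $|\boldsymbol{Z}_\infty|>0$ equals the law of $\boldsymbol{Y}_t$. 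Generating functions on $[0,1]^d$ determine the law, including for countable $d$ by finite-dimensional projections.

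\textbf{Step 2 (comparison on survival).} Since $|\tilde{\boldsymbol{Z}}_t|\le|\boldsymbol{Z}_t|$, we have
\[
\mathbb{E}\left(\frac{\mathbbm{1}(|\boldsymbol{Z}_\infty|>0)}{|\boldsymbol{Z}_t|^r}\right)\le \mathbb{E}\left(\frac{\mathbbm{1}(|\boldsymbol{Z}_\infty|>0)}{|\tilde{\boldsymbol{Z}}_t|^r}\right)=(1-q_{i_0})\,\mathbb{E}\left(\frac{1}{|\boldsymbol{Y}_t|^r}\right).
\]

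\textbf{Step 3 (removing the survival indicator).} Proceed as in the lower bound of Lemma \ref{t_and_infinity}. By the tower law,
\[
\mathbb{E}\bigl(|\boldsymbol{Z}_t|^{-r}\mathbbm{1}(|\boldsymbol{Z}_\infty|>0)\bigr)=\mathbb{E}\bigl(|\boldsymbol{Z}_t|^{-r}\mathbbm{1}(|\boldsymbol{Z}_t|>0)\,\mathbb{P}(|\boldsymbol{Z}_\infty|>0\mid\boldsymbol{Z}_t)\bigr)\ge(1-\sup_{i\in S}q_i)\,\mathbb{E}\bigl(|\boldsymbol{Z}_t|^{-r}\mathbbm{1}(|\boldsymbol{Z}_t|>0)\bigr).
\]
The inequality holds because $1-\prod_i q_i^{Z_t^{i_0,i}}\ge 1-\sup_i q_i$ whenever $|\boldsymbol{Z}_t|>0$.

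\textbf{Step 4 (conclusion).} Divide by $\mathbb{P}(|\boldsymbol{Z}_t|>0)=1-f_t^{i_0}(\boldsymbol{0})$ and combine Steps 2–3. This gives the stated bound; Assumption \ref{ass:extinct} ensures $1-\sup_i q_i>0$.

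The main obstacle is Step 1: one must check rigorously that the thinned process conditioned on survival has law $\boldsymbol{Y}_t$. That means verifying the generating-function computation and using Lemma \ref{lemma:iterates}, with some care for infinitely many types.

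If the coupling is awkward to formalise, the fallback is analytic. Use the integral formula of Lemma \ref{generating}. Compare $f_t^{i_0}(s\boldsymbol{1})$ with $f_t^{i_0}(s(\boldsymbol{1}-\boldsymbol{q})+\boldsymbol{q})$ via monotonicity of generating functions, conditioning on $\boldsymbol{Z}_t$ in the same way.
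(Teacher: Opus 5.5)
Your proof is correct, and it follows a genuinely different route from the paper's.

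\textbf{The paper's route.} The paper argues analytically. It sets $U_{t,r}^{i_0}(s)=f_{t,r}^{i_0}(s(\boldsymbol{1}-\boldsymbol{q})+\boldsymbol{q})-(1-\sup_{i\in S}q_i)f_{t,r}^{i_0}(s\boldsymbol{1})$ and shows it is nondecreasing by comparing partial derivatives at $s(\boldsymbol{1}-\boldsymbol{q})+\boldsymbol{q}\ge s\boldsymbol{1}$. It then turns $U_{t,r}^{i_0}(s)\ge U_{t,r}^{i_0}(0)$ into a pointwise inequality between the generating functions of $|\boldsymbol{Y}_t|^r$ and $|\boldsymbol{Z}_t|^r$, and integrates against $ds/s$ using Lemma~\ref{generating}. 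Your fallback is essentially this argument.

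\textbf{Your route.} You identify $\boldsymbol{Y}_t$ with the individuals of infinite line of descent, conditioned on survival. You then use the pathwise bound $|\tilde{\boldsymbol{Z}}_t|\le|\boldsymbol{Z}_t|$ and remove the survival indicator as in the lower bound of Lemma~\ref{t_and_infinity}. The factor $(1-q_{i_0})/(1-f_t^{i_0}(\boldsymbol{0}))$ then appears transparently as $\mathbb{P}(|\boldsymbol{Z}_\infty|>0)/\mathbb{P}(|\boldsymbol{Z}_t|>0)$.

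\textbf{What each buys.} The paper's route avoids building a coupling. Yours needs neither Lemma~\ref{generating} nor the power generating functions $f_{t,r}^{i_0},F_{t,r}^{i_0}$. It works verbatim for every $r$, and indeed for any nonnegative nonincreasing function of the population size. Your Step~2 alone already gives $\mathbb{E}(|\boldsymbol{Z}_t|^{-r}\mid|\boldsymbol{Z}_\infty|>0)\le\mathbb{E}(|\boldsymbol{Y}_t|^{-r})$. Combined with Lemma~\ref{y_upper}, this would remove the factor $(1-\sup_{i\in S}q_i)^{-2}$ from the upper bound in Theorem~\ref{hs_thm}.

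Your argument is also the more robust one. The paper's step from \eqref{eq:U_expanded} to \eqref{eq:generating_functions}, and its evaluation of $U_{t,r}^{i_0}(0)$, implicitly use two identities: $f_{t,r}^{i_0}(s(\boldsymbol{1}-\boldsymbol{q})+\boldsymbol{q})-q_{i_0}=(1-q_{i_0})F_{t,r}^{i_0}(s\boldsymbol{1})$ and $f_{t,r}^{i_0}(\boldsymbol{q})=q_{i_0}$. These hold for $r=1$, by Lemma~\ref{lemma:iterates} and Proposition~\ref{prop:compute_extinct}, but fail in general for $r\ge2$. The reason is that the substitution $s_i\mapsto s_i(1-q_i)+q_i$ then thins $(Z_t^{i_0,i})^r$ objects rather than $Z_t^{i_0,i}$ individuals. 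For example, take a single type with $\mathbb{P}(Z_1=0)=1/4$ and $\mathbb{P}(Z_1=2)=3/4$. Then $q=1/3$, but $f_{1,2}(q)=1/4+\tfrac{3}{4}\cdot 3^{-4}=7/27$. Your pathwise comparison avoids this issue entirely.
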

\begin{proof}
  To compare the generating functions of $|\boldsymbol{Z}_t|^r$ and $|\boldsymbol{Y}_t|^r$, for each $t \in \mathbb{N}_+$ we define $U_{t,r}^{i_0}:[0,1]\to\mathbb{R}$ as %for any $r \in \mathbb{N}_+$
  %\paul{Should we specify $U_{t,r}^{i_0}:[0,1]\to\mathbb{R}$?}
\begin{equation*}
    U^{i_0}_{t, r}(s) := f_{t, r}^{i_0}( s(\boldsymbol{1}-\boldsymbol{q}) +\boldsymbol{q}) - (1-\sup_{i \in S}q_i)f_{t,r}^{i_0}(s\boldsymbol{1}).
\end{equation*}
Observe that 
\begin{equation*}
    U^{i_0}_{t,r}(0) = f_{t,r}^{i_0}(\boldsymbol{q}) -(1-\sup_{i \in S}q_i) f_{t}^{i_0}(\boldsymbol{0}) = q_{i_0} - (1-\sup_{i \in S}q_i) f_{t}^{i_0}(\boldsymbol{0}).
\end{equation*}
The derivative of $U^{i_0}_{t,r}(s)$ with respect to $s$ can be expressed in terms of a sum of partial derivatives of $f_{t,r}^{i_0}(\boldsymbol{s})$. Note that for all $i \in S$,%the partial derivative of $f_{t,r}^{i_0}(\boldsymbol{s})$ with respect to $s_i$ is non-negative 
 \begin{equation*}
     \frac{\partial f_{t,r}^{i_0}}{\partial_i}(\boldsymbol{s})=\sum_{\substack{\boldsymbol{w} \in \mathbb{N}_0^d{}}}
     % Probability
     \left(\mathbb{P}(\boldsymbol{Z}_t=\boldsymbol{w}) w_i^r \frac{1}{s_i} \prod_{j \in S} s_j^{(w_j^r)}\right) \ge 0,
 \end{equation*} 
 where $\partial f/\partial_i$ denotes the partial derivative of a function $f$ with respect to its $i$th argument. ${\partial f_{t,r}^{i_0}}/{\partial_i}$ is also non-decreasing in each of its arguments, which can be verified by considering the second partial derivative
 \begin{equation*}
     \frac{\partial^2 f_{t,r}^{i_0}}{\partial_i^2}(\boldsymbol{s})=\sum_{\substack{\boldsymbol{w} \in \mathbb{N}_0^d{}}}
     % Probability
     \left(\mathbb{P}(\boldsymbol{Z}_t=\boldsymbol{w}) w_i^r (w_i^r - 1) \frac{1}{s_i^2} \prod_{j \in S} s_j^{(w_j^r)}\right) \ge 0.
 \end{equation*}
 Now define $\boldsymbol{x}:[0,1]^d\to[0,1]^d$ by $\boldsymbol{s} \mapsto \boldsymbol{x}(\boldsymbol{s}) = \boldsymbol{s} \odot (\boldsymbol{1}-\boldsymbol{q}) + \boldsymbol{q}$. By the chain rule, 
\begin{align}
 \frac{dU_{t,r}^{i_0}(s)}{ds} &= (\boldsymbol{1} - \boldsymbol{q}) \cdot \nabla f_{t,r}^{i_0} (\boldsymbol{x}(s\boldsymbol{1})) - (1-\sup_{j \in S} q_j) (\nabla \cdot f_{t,r}^{i_0})(s\boldsymbol 1) \label{eq:sup_qi}\\
 &\geq (\boldsymbol{1} - \boldsymbol{q}) \cdot \nabla f_{t,r}^{i_0} (\boldsymbol{x}(s\boldsymbol{1})) - (\boldsymbol{1} - \boldsymbol{q}) \cdot \nabla f_{t,r}^{i_0} (s\boldsymbol{1})\label{eq:expect_Z}\\
 &= (\boldsymbol{1} - \boldsymbol{q}) \cdot (\nabla f_{t,r}^{i_0} (\boldsymbol{x}(s\boldsymbol{1})) -  \nabla f_{t,r}^{i_0} (s\boldsymbol{1}))\notag%\label{eq:derivatives_sum}
\end{align}
%Now define $x_i:=s_i(1-q_i)+q_i$ for each $i \in S$ and let $\boldsymbol{x} := (x_i)_{i \in S}$. By the multivariate chain rule, 
 %Consider the first derivative of $U_{t,r}^{i_0}(s)$ with respect to $s$
% \begin{align}
%    \frac{dU_{t,r}^{i_0}(s)}{ds}
%    &=\sum_{i \in S} (1-q_i)\frac{\partial f_{t,r}^{i_0}(\boldsymbol{x})}{\partial x_i} - \sum_{i \in S}(1-\sup_{j \in S}q_j) \left.\frac{\partial f_{t,r}^{i_0}(\boldsymbol{s})}{\partial s_i}\right|_{\boldsymbol{s}=s\boldsymbol{1}}\label{eq:sup_qi} \\
%    &\ge \sum_{i \in S} (1-q_i)\frac{\partial f_{t,r}^{i_0}(\boldsymbol{x})}{\partial x_i} - \sum_{i \in S} (1-q_i)\left.\frac{\partial f_{t,r}^{i_0}(\boldsymbol{s})}{\partial s_i}\right|_{\boldsymbol{s}=s\boldsymbol{1}} \label{eq:expect_Z}\\
%    &=\sum_{i \in S} (1-q_i)\left[\frac{\partial f_{t,r}^{i_0}(\boldsymbol{x})}{\partial x_i} -\frac{\partial f_{t,r}^{i_0}(\boldsymbol{s})}{\partial s_i}\right]_{\boldsymbol{s}=s\boldsymbol{1}}. \label{eq:derivatives_sum}
% \end{align}
where \eqref{eq:expect_Z} comes from noting that each entry of $\nabla f_{t,r}^{i_0} (s\boldsymbol{1})$ %partial derivative in the second sum
is non-negative and that, for each $i\in S$, $1-\sup_{j \in S}q_j \le 1-q_i$.

Since we have that $x_i = s_i(1-q_{i}) +q_{i} = s_i + q_{i}(1-s_i)\ge s_i$ for $s_i \in [0,1]$, we can use the monotonicity of the derivatives to compare $f_{t,r}^{i_0}(\boldsymbol{x}(\boldsymbol{s}))$ and $f_{t,r}^{i_0}(\boldsymbol{s})$. For each $i \in S$,
 \begin{equation}
     \nabla_i f_{t,r}^{i_0}(\boldsymbol{x}(\boldsymbol{s})) \geq 
     \nabla_i f_{t,r}^{i_0}(\boldsymbol{s}).\label{eq:derivatives_comparison}
 \end{equation}
% \begin{equation}
%     \left.\frac{\partial f_{t,r}^{i_0}(\boldsymbol{x})}{\partial x_i}\right|_{\boldsymbol{x}=\boldsymbol{s} \odot (\boldsymbol{1}-\boldsymbol{q}) + \boldsymbol{q}} \ge\frac{\partial f_{t,r}^{i_0}(\boldsymbol{s})}{\partial s_i}  \label{eq:derivatives_comparison}.
% \end{equation}
Now using \eqref{eq:derivatives_comparison}, we can say that
%\begin{align}
    $\frac{dU_{t,r}^{i_0}(s)}{ds} %&\ge \sum_{i \in S} (1-q_i)\left[\frac{\partial f_{t,r}^{i_0}(\boldsymbol{x})}{\partial x_i} -\frac{\partial f_{t,r}^{i_0}(\boldsymbol{s})}{\partial s_i}\right]_{\boldsymbol{s}=s\boldsymbol{1}}
    \ge 0$, %\label{eq:U_increasing}
%\end{align}
i.e.\ that $U_{t,r}^{i_0}(s)$ is increasing in $s$,
%\begin{equation}
    $U_{t,r}^{i_0}(s) \ge U_{t,r}^{i_0}(0)$, and thus:
%\end{equation}
%Expanding \eqref{eq:U_inequality}, we get
 \begin{equation}
     f_{t,r}^{i_0}(\boldsymbol{x}(s\boldsymbol{1})) - (1-\sup_{i \in S}q_i)f_{t,r}^{i_0}(s\boldsymbol{1}) \ge q_{i_0} - (1-\sup_{i \in S}q_i) f_{t}^{i_0}(\boldsymbol{0}). \label{eq:U_expanded}
 \end{equation}
 Rearranging \eqref{eq:U_expanded} and using the definition of $F_{t,r}^{i_0}$, we obtain an upper bound for the generating function of $\boldsymbol{Z}_t$ in terms of the generating function of $\boldsymbol{Y}_t$:
 \begin{equation}
     F_{t,r}^{i_0}(s\boldsymbol{1}) \ge \frac{1-\sup_{i \in S} q_i}{1-q_{i_0}}(f_{t,r}^{i_0}(s\boldsymbol{1})-f_{t}^{i_0}(\boldsymbol{0})). \label{eq:generating_functions}
 \end{equation}
 Finally, using Lemma \ref{generating}, we express the conditional harmonic mean of $|\boldsymbol{Z}_t|^r$ as
 \begin{align}
     \mathbb{E}\left(\frac{1}{|\boldsymbol{Z}_t|^r}\bigm| |\boldsymbol{Z}_t| >0\right) %\nonumber\\
&=    \int_0^1 \frac{f_{t,r}^{i_0}(s\boldsymbol{1})-f_{t}^{i_0}(\boldsymbol{0})}{1-f_{t}^{i_0}(\boldsymbol{0})} \frac{1}{s}ds \nonumber \\
&\le \frac{1-q_{i_0}}{    (1-\sup_{i \in S}q_i)(1-f_{t}^{i_0}(\boldsymbol{0}))  } \int_0^1 \frac{F_t^{i_0}(s\boldsymbol{1})}{s} ds \label{eq:generating_inequality} \\
&= \frac{1-q_{i_0}}{    (1-\sup_{i \in S}q_i)(1-f_{t}^{i_0}(\boldsymbol{0}))  } \int_0^1 \frac{\sum_{w=1}^{\infty} \mathbb{P}(|\boldsymbol{Y}_t|=w) s^{(w^r)}}{s} ds \nonumber \\
&=\frac{1-q_{i_0}}{    (1-\sup_{i \in S}q_i)(1-f_{t}^{i_0}(\boldsymbol{0}))  }  \sum_{w=1}^{\infty} \mathbb{P}(|\boldsymbol{Y}_t|=w) \frac{1}{w^r} \nonumber \\
&=  \frac{1-q_{i_0}}{     (1-\sup_{i \in S}q_i)(1-f_{t}^{i_0}(\boldsymbol{0}))  }\mathbb{E}\left(\frac{1}{|\boldsymbol{Y}_t|^r}\right), \nonumber
 \end{align}
 where \eqref{eq:generating_inequality} comes from \eqref{eq:generating_functions} and is well defined under Assumption~\ref{ass:extinct}. 
\end{proof}
Let us now proceed to a lower bound for the conditional harmonic moment of $|\boldsymbol{Z}_t|^r$ in terms of $\mathbb{E}(|\boldsymbol{Y}_t|^r)$.
\begin{lemma}\label{z_and_y_lower}
Under Assumptions \assumptionsHS, for $r \in \mathbb{N}_+$,
       \begin{equation*}
       \frac{(1-\sup_{i \in S}q_i)^{r}}{\mathbb{E}(|\boldsymbol{Y}_t|^{r})} 
   \le\mathbb{E}\left(\frac{1}{|\boldsymbol{Z}_t|^{r}}\bigm| |\boldsymbol{Z}_{t}| >0\right).
    \end{equation*}
\end{lemma}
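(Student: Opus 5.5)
The plan is to reduce the harmonic moment to an ordinary moment via Jensen's inequality, and then compare the $r$th moment of $|\boldsymbol{Z}_t|$ with that of $|\boldsymbol{Y}_t|$ through a thinning representation of the Harris--Sevastyanov process obtained from Lemma \ref{lemma:iterates}.

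\emph{Step 1 (Jensen).} Conditionally on $\{|\boldsymbol{Z}_t|>0\}$, the map $x\mapsto x^{-1}$ is convex on $(0,\infty)$. Applying Jensen's inequality to the positive variable $|\boldsymbol{Z}_t|^r$ gives
\begin{equation*}
\mathbb{E}\left(\frac{1}{|\boldsymbol{Z}_t|^{r}}\bigm| |\boldsymbol{Z}_t|>0\right)\ge \frac{1}{\mathbb{E}\left(|\boldsymbol{Z}_t|^{r}\bigm| |\boldsymbol{Z}_t|>0\right)}.
\end{equation*}
This is trivially true if the moment on the right is infinite. It therefore suffices to show
\begin{equation*}
\mathbb{E}\left(|\boldsymbol{Z}_t|^r \bigm| |\boldsymbol{Z}_t|>0\right)\le (1-\sup_{i\in S}q_i)^{-r}\,\mathbb{E}(|\boldsymbol{Y}_t|^r).
\end{equation*}

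\emph{Step 2 (thinning representation).} Given $\boldsymbol{Z}_t$, retain each individual of type $i$ independently with probability $1-q_i$. This models ``has an infinite line of descent''. Let $\boldsymbol{N}_t$ be the vector of retained counts. Then
\begin{equation*}
\mathbb{E}\Bigl(\prod_{i\in S} s_i^{N_t^i}\Bigr)=\mathbb{E}\Bigl(\prod_{i\in S}(q_i+(1-q_i)s_i)^{Z_t^{i_0,i}}\Bigr)=f_t^{i_0}(\boldsymbol{s}\odot(\boldsymbol{1}-\boldsymbol{q})+\boldsymbol{q}).
\end{equation*}
By the Remark following Proposition \ref{prop:compute_extinct}, $\mathbb{P}(\boldsymbol{N}_t=\boldsymbol{0})=f_t^{i_0}(\boldsymbol{q})=q_{i_0}$. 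Lemma \ref{lemma:iterates} then says exactly that $\boldsymbol{Y}_t$ has the law of $\boldsymbol{N}_t$ conditioned on $\{|\boldsymbol{N}_t|>0\}$. Hence
\begin{equation*}
\mathbb{E}(|\boldsymbol{Y}_t|^r)=\frac{\mathbb{E}(|\boldsymbol{N}_t|^r)}{1-q_{i_0}}.
\end{equation*}
This identification of laws via generating functions is the one step needing care. If the countable type space causes concern, one can work directly with the identity of pgfs and compute moments from it.

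\emph{Step 3 (comparison).} Conditionally on $\boldsymbol{Z}_t$, Jensen's inequality and $\mathbb{E}(|\boldsymbol{N}_t|\mid\boldsymbol{Z}_t)=\sum_i(1-q_i)Z_t^{i_0,i}$ give
\begin{equation*}
\mathbb{E}(|\boldsymbol{N}_t|^r\mid \boldsymbol{Z}_t)\ge \Bigl(\sum_{i}(1-q_i)Z_t^{i_0,i}\Bigr)^r\ge (1-\sup_{i} q_i)^r|\boldsymbol{Z}_t|^r.
\end{equation*}
Taking expectations yields
\begin{equation*}
(1-q_{i_0})\,\mathbb{E}(|\boldsymbol{Y}_t|^r)\ge (1-\sup_i q_i)^r\,\mathbb{E}(|\boldsymbol{Z}_t|^r)=(1-\sup_i q_i)^r\,(1-f_t^{i_0}(\boldsymbol{0}))\,\mathbb{E}\left(|\boldsymbol{Z}_t|^r\bigm| |\boldsymbol{Z}_t|>0\right).
\end{equation*}
Since $f_t^{i_0}(\boldsymbol{0})=\mathbb{P}(|\boldsymbol{Z}_t|=0)\le q_{i_0}$, we have $1-f_t^{i_0}(\boldsymbol{0})\ge 1-q_{i_0}$. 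Dividing through gives the required moment bound, and combining with Step 1 completes the proof. Assumption \ref{ass:extinct} guarantees that $1-\sup_i q_i>0$, so the bound is non-trivial.
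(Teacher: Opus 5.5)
Your proof is correct. It shares the paper's outer frame: Jensen's inequality for $x\mapsto x^{-1}$ at the end, and $1-f_t^{i_0}(\boldsymbol 0)\ge 1-q_{i_0}$ to pass from $\mathbb{E}(|\boldsymbol Z_t|^r)$ to the conditional moment. The central comparison $\mathbb{E}(|\boldsymbol Y_t|^r)\ge(1-\sup_{i\in S}q_i)^r\,\mathbb{E}(|\boldsymbol Z_t|^r\mid|\boldsymbol Z_t|>0)$, however, you reach by a different route.

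The paper works analytically. It differentiates $F_t^{i_0}(\boldsymbol s)=(f_t^{i_0}(\boldsymbol s\odot(\boldsymbol 1-\boldsymbol q)+\boldsymbol q)-q_{i_0})/(1-q_{i_0})$ $r$ times at $\boldsymbol 1$ to obtain the factorial-moment inequality $\mathbb{E}((|\boldsymbol Y_t|)_r)\ge\frac{(1-\sup_i q_i)^r}{1-q_{i_0}}\mathbb{E}((|\boldsymbol Z_t|)_r)$. It then converts to ordinary moments with Stirling numbers of the second kind, using $(1-\sup_i q_i)^j\ge(1-\sup_i q_i)^r$ for $j\le r$.

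You instead identify $\boldsymbol Y_t$ in law with a $\boldsymbol q$-thinning of $\boldsymbol Z_t$ conditioned to be nonempty. This follows exactly from Lemma \ref{lemma:iterates} together with $f_t^{i_0}(\boldsymbol q)=q_{i_0}$. You then apply conditional Jensen to $x\mapsto x^r$, so only the first conditional moment of the thinned count is needed.

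Your version has three advantages. It avoids $r$-fold differentiation of a possibly infinite-dimensional generating function at the boundary point $\boldsymbol 1$, where interchanging derivatives with the sums over $j_1,\dots,j_r\in S$ needs justification, and where the paper's display writes the index-dependent factor $\prod_l(1-q_{j_l})$ outside the sum. It dispenses with the Stirling-number bookkeeping. It also makes the meaning of $\boldsymbol Y_t$ transparent.

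The paper's version stays within the generating-function calculus used for Lemmas \ref{generating} and \ref{z_and_y_upper}, and records the factorial-moment inequality along the way. Your coupling gives that inequality as well: $(|\boldsymbol N_t|)_r$ counts ordered $r$-tuples of distinct retained individuals, and each such tuple survives the thinning with probability at least $(1-\sup_i q_i)^r$.
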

\begin{proof}
Firstly, we will relate the expected value of the falling factorials $(|\boldsymbol{Y}_t|)_{r}$ and $(|\boldsymbol{Z}_t|)_{r}$. Note that, with $\boldsymbol{x}$ defined as in the proof of Lemma \ref{z_and_y_upper},
        \begin{align}
            \mathbb{E}((|\boldsymbol{Y}_t|)_{r})
            = \sum_{j_1, \dots,j_r \in S}\frac{\partial^r F_t^{i_0}}{\partial_{j_1} \dots \partial_{j_r}}(\boldsymbol{1})
             &=  \frac{\prod_{l=1}^r 1-q_{j_l}}{1-q_{i_0}}\sum_{j_1,\dots,j_r \in S}\frac{\partial^r f_t^{i_0}}{\partial_{j_1}\dots \partial_{j_r}}(\boldsymbol{x}(\boldsymbol{1}))\label{eq:by_def} \\
            &\ge \frac{(1-\sup_{i \in S}q_i)^r}{1-q_{i_0}}\sum_{j_1,\dots,j_r \in S} \frac{\partial^r f_t^{i_0}}{\partial_{j_1}\dots \partial_{j_r}}(\boldsymbol{x}(\boldsymbol{1}))\nonumber \\
            %&=\frac{(1-\sup_{i \in S}q_i)^r}{1-q_{i_0}}\sum_{j_1,\dots,j_r \in S} \left.\frac{\partial^r f_t^{i_0}(\boldsymbol{x})}{\partial s_{j_1} \dots \partial s_{j_r}}\frac{\partial s_{j_1} \dots \partial s_{j_r}}{\partial x_{j_1} \dots \partial x_{j_r}}\right|_{\boldsymbol{s}=\boldsymbol{1}}\nonumber \\
            %&=\frac{(1-\sup_{i \in S}q_i)^r}{1-q_{i_0}}\sum_{j_1,\dots,j_r \in S} 
            % Possible values of Z
            %\sum_{\boldsymbol{w}\in \mathbb{N}_0^{d}}\left( p_{i_0}(\boldsymbol{w}) \prod_{j \in S}((1-q_j)s_j +q_j)^{w_j}\right) \notag\\
            % The part that simplifies
            %&\phantom{{}=\frac{(1-\sup_{i \in S}q_i)^r}{1-q_{i_0}}\sum_{j_1,\dots,j_r \in S}}\times\left.\frac{\prod_{l=1}^{r}(1-q_{j_l})w_{j_l}}
            % Denominator
            %{\prod_{l=1}^{r}(1-q_{j_l}) ((1-q_{j_l})s_{j_l}+q_{j_l})}\right|_{\boldsymbol{s}=\boldsymbol{1}}\nonumber \\
            %&=\frac{(1-\sup_{i \in S}q_i)^r}{1-q_{i_0}}\sum_{j_1,\dots,j_r \in S}
            % Possible values of Z
            %\sum_{\boldsymbol{w}\in \mathbb{N}_0^{d}}\left( p_{i_0}(\boldsymbol{w})\right)
            % The part that simplifies
            %\prod_{l=1}^{r}w_{j_l}
            %\nonumber \\
             &= \frac{(1-\sup_{i \in S}q_i)^r}{1-q_{i_0}}\sum_{j_1,\dots,j_r \in S} \frac{\partial^r f_t^{i_0}}{\partial_{j_1}\dots \partial_{j_r}}(\boldsymbol{1}) \nonumber \\
             &= \frac{(1-\sup_{i \in S}q_i)^r}{1-q_{i_0}} \mathbb{E}((|\boldsymbol{Z}_t|)_{r}),  \label{expectations}
        \end{align}
where the right of \eqref{eq:by_def} follows from the connection between the expected value of $\boldsymbol{Y}_t$ and $\boldsymbol{Z}_t$ (refer to Lemma \ref{lemma:expected}).
        
In order to relate this inequality involving falling factorials to the $r$th moments of $|\boldsymbol{Y}_t|$ and $|\boldsymbol{Z}_t|$, consider Stirling numbers of the second kind $\begin{Bsmallmatrix}
r \\
j
\end{Bsmallmatrix}$, which describe the number of partitions of $[r]$ into $j$ blocks. It holds \cite[Eq.\ (2.3), p.\ 87]{combinatorics} that 
\begin{equation}
    x^{r} = \sum_{j=0}^{r} \begin{Bmatrix}
r \\
j
\end{Bmatrix} (x)_j \label{falling_factorial}.
\end{equation}
Let us set $x=|\boldsymbol{Y}_t|$ and take the expectation of both sides in (\ref{falling_factorial}) to get
\begin{align}
    \mathbb{E}(|\boldsymbol{Y}_t|^{r}) %\nonumber \\
    = \sum_{j=0}^{r} \begin{Bmatrix}
r \\
j
\end{Bmatrix} \mathbb{E}((|\boldsymbol{Y}_t|)_{j})%\label{applied_def} \\
&\ge \sum_{j=0}^{r} \begin{Bmatrix}
r \\
j
\end{Bmatrix}\frac{(1-\sup_{i \in S}q_i)^j}{1-q_{i_0}} \mathbb{E}((|\boldsymbol{Z}_t|)_{j}) \label{partial_derivatives}\\
&\ge \frac{(1-\sup_{i \in S}q_i)^{r}}{1-q_{i_0}} \sum_{j=0}^{r} \begin{Bmatrix}
r \\
j
\end{Bmatrix}\mathbb{E}((|\boldsymbol{Z}_t|)_{j}) \label{maximum_power} \\
& = \frac{(1-\sup_{i \in S}q_i)^{r}}{1-q_{i_0}} \mathbb{E}(|\boldsymbol{Z}_t|^{r}), \label{k_th_moment}
\end{align}
 where the inequality in \eqref{partial_derivatives} uses the bound derived in \eqref{expectations}, and \eqref{maximum_power} bounds $(1-\sup_{i \in S}q_i)^j$ below by $(1-\sup_{i \in S}q_i)^r$ for each $j \in \{0,1,\dots,r\}$. For $r \in \mathbb{N}_+$:
 % Let us now relate the expectation of $|\boldsymbol{Z}_t|^r$ to the expectation of $|\boldsymbol{Z}_t|^r$ conditioned on non-extinction.
        \begin{equation}
            \mathbb{E}(|\boldsymbol{Z}_t|^r) = (1- \mathbb{P}(|\boldsymbol{Z}_t|=0))\mathbb{E}(|\boldsymbol{Z}_t|^r\bigm| |\boldsymbol{Z}_t| > 0) \ge  (1- q_{i_0})\mathbb{E}(|\boldsymbol{Z}_t|^r\bigm| |\boldsymbol{Z}_t| > 0). \label{EZandEZ0}
        \end{equation}
        Combining \eqref{k_th_moment} and \eqref{EZandEZ0}), we arrive at
        \begin{equation*}
            \mathbb{E}(|\boldsymbol{Y}_t|^{r})   \ge (1-\sup_{i \in S}q_i)^{r} \mathbb{E}(|\boldsymbol{Z}_t|^{r}\bigm| |\boldsymbol{Z}_t| > 0).  \label{denominator}
        \end{equation*}
        Finally, we can apply Jensen's inequality to obtain %relate the $r$th moment of $|\boldsymbol{Y}_t|$ to the corresponding harmonic mean of $|\boldsymbol{Z}_t|$:
    \begin{align*}
        \mathbb{E}\left(\frac{1}{|\boldsymbol{Z}_t|^{r}}\bigm| |\boldsymbol{Z}_t| > 0\right) %\nonumber \\
        & \ge \frac{1}{\mathbb{E}(|\boldsymbol{Z}_t|^{r}\bigm| |\boldsymbol{Z}_t| > 0)}  %\nonumber \\
         \ge \frac{(1-\sup_{i \in S}q_i)^{r}}{\mathbb{E}(|\boldsymbol{Y}_t|^{r})}. \label{just_above}
    \end{align*}
    %where (\ref{just_above}) comes from (\ref{denominator}).
\end{proof}
\subsection{Relationship between harmonic mean at generation 1 and t for the transformed process}

We upper bound the harmonic mean of $|\boldsymbol{Y}_t|$ in terms of the harmonic mean of the same process at time $1$.
\begin{lemma}\label{y_upper}
Under Assumptions \ref{infinite_matrix} and \ref{R-positive}, for $r\in\mathbb{N}_+$, %\paul{Really not allowing $r=1$ here?}
    \begin{equation*}
       \mathbb{E}\left(\frac{1}{|\boldsymbol{Y}_t|^r}\right) \le   \left(\mathbb{E}\left(\sup_{i \in S}\frac{1}{|\boldsymbol{Y}_{1}^{i}|}\right)\right)^{tr} .
    \end{equation*}
\end{lemma}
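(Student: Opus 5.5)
We argue by induction on $t$, using the branching decomposition of the transformed process $\boldsymbol{Y}$ at the first generation. Since $\boldsymbol{Y}$ has generating functions $\boldsymbol{F}$ with iterates given by Lemma \ref{lemma:iterates}, it is itself a multi-type Galton--Watson process. It never becomes extinct: $F^i(\boldsymbol{0}) = (f^i(\boldsymbol{q})-q_i)/(1-q_i) = 0$ by Proposition \ref{prop:compute_extinct}, so $|\boldsymbol{Y}_t|\ge 1$ almost surely and all harmonic moments are at most $1$.

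Fix $r$ and write $\beta := \mathbb{E}(\sup_{i\in S} |\boldsymbol{Y}_1^i|^{-1})$. Here we couple the first-generation offspring of all types on one probability space, taking them independent across types, so that the supremum is a genuine random variable. The statement is trivial for $t=0$. For $t\ge 1$, decompose at generation $t-1$ as in \eqref{eq:Zdecomposition}:
\begin{equation*}
|\boldsymbol{Y}_t| = \sum_{i\in S}\sum_{s=1}^{Y_{t-1}^{i_0,i}} |\Y|,
\end{equation*}
where, conditionally on $\boldsymbol{Y}_{t-1}$, the summands are independent copies of $|\boldsymbol{Y}_1^i|$, each at least $1$. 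The key elementary step is to bound the reciprocal of a sum of $n := |\boldsymbol{Y}_{t-1}|$ positive terms by the geometric-mean (AM--GM) inequality:
\begin{equation*}
\Big(\sum_{\ell=1}^n a_\ell\Big)^{-r} \le n^{-r}\prod_{\ell=1}^n a_\ell^{-r/n}.
\end{equation*}
Taking the conditional expectation and using conditional independence gives $\prod_\ell \mathbb{E}(a_\ell^{-r/n})$. Since $r/n\le r$ and $a_\ell\ge1$, we have $a_\ell^{-r/n}$ bounded by a power of $\sup_i |\boldsymbol{Y}_1^i|^{-1}$, and Jensen's inequality (concavity of $x\mapsto x^{1/n}$) gives $\mathbb{E}(a_\ell^{-r/n}) \le (\mathbb{E}(a_\ell^{-r}))^{1/n}$. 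We thus obtain
\begin{equation*}
\mathbb{E}\big(|\boldsymbol{Y}_t|^{-r}\bigm|\boldsymbol{Y}_{t-1}\big)\le |\boldsymbol{Y}_{t-1}|^{-r}\sup_{i}\mathbb{E}\big(|\boldsymbol{Y}_1^i|^{-r}\big).
\end{equation*}

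The main obstacle is turning the factor $\sup_i\mathbb{E}(|\boldsymbol{Y}_1^i|^{-r})$ into $\beta^{r}$ and obtaining the exponent $tr$. Since $|\boldsymbol{Y}_1^i|\ge 1$, we have $|\boldsymbol{Y}_1^i|^{-r}\le |\boldsymbol{Y}_1^i|^{-1}$, so this factor is at most $\beta$, not $\beta^r$. Instead, I would apply the Jensen step in the other direction. Writing $\sum_\ell a_\ell \ge n\,(\prod_\ell a_\ell)^{1/n}$, the reciprocal $r$th power is at most $n^{-r}\prod_\ell a_\ell^{-r/n}$. When $n\ge r$ the exponent $r/n\le 1$, and Jensen gives $\mathbb{E}(a^{-r/n})\le (\mathbb{E}\, a^{-1})^{r/n}$; the product over the $n$ factors then yields exactly $\beta^{r}$. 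When $n<r$, I would use the cruder bound $\sum_\ell a_\ell \ge a_1$, or treat these configurations separately, while trying to keep the factor $\beta^r$.

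Granted the one-step bound $\mathbb{E}(|\boldsymbol{Y}_t|^{-r}\mid\boldsymbol{Y}_{t-1})\le \beta^{r}|\boldsymbol{Y}_{t-1}|^{-r}$, the tower law and induction on $t$ give
\begin{equation*}
\mathbb{E}(|\boldsymbol{Y}_t|^{-r})\le\beta^{r}\,\mathbb{E}(|\boldsymbol{Y}_{t-1}|^{-r})\le\dots\le\beta^{tr}.
\end{equation*}
The delicate point is the small-$n$ case. There I expect to need $|\boldsymbol{Y}_{t-1}|^{-r}\le|\boldsymbol{Y}_{t-1}|^{-1}$ together with a separate argument, or else a weakening of the exponent.
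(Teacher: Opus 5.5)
\textbf{There is a genuine gap, and it cannot be closed, because the statement as written is false for $r\ge2$.}

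The case $n=|\boldsymbol{Y}_{t-1}|<r$ that you leave open is not a corner case. Since $\boldsymbol{Y}_0=\boldsymbol{e}_{i_0}$, the very first step of your induction has $n=1$. There the one-step bound you need reads $\mathbb{E}(|\boldsymbol{Y}_1^{i_0}|^{-r})\le\beta^r$.

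In the single-type case ($S=\{1\}$ is allowed) one has $\beta=\mathbb{E}(1/Y_1)$. Jensen's inequality then gives $\mathbb{E}(Y_1^{-r})\ge(\mathbb{E}(Y_1^{-1}))^r$, strictly unless $Y_1$ is degenerate. For a concrete example, take offspring uniform on $\{1,2\}$. Then $\boldsymbol{q}=\boldsymbol{0}$, $\boldsymbol{Y}=\boldsymbol{Z}$, and Assumptions \ref{infinite_matrix}--\ref{R-positive} hold. With $r=2$ you get $\mathbb{E}(Y_1^{-2})=5/8>9/16=\beta^2$, and also $\mathbb{E}(Y_2^{-2})\approx0.379>(3/4)^4\approx0.316$. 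So the obstruction you found is real.

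The paper's own proof hits exactly the same point without noticing it. After expanding $|\boldsymbol{Y}_t|^r$ as a sum over $r$-tuples of generation-$(t-1)$ individuals, the step leading to \eqref{eq:independenceY} factorises $\mathbb{E}(\prod_{j}|\boldsymbol{Y}^{i_j}_{t-1,1,s_j}|^{-1}\mid\boldsymbol{Y}_{t-1})$ into $\prod_j\mathbb{E}(|\boldsymbol{Y}^{i_j}_{t-1,1,s_j}|^{-1})$ ``by independence''. That step is invalid for tuples that repeat an individual; for those, Jensen goes the wrong way. At $t=1$ every tuple is the root repeated $r$ times. These repeated-index tuples are precisely your small-$n$ configurations.

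What your argument does prove is worth keeping:
\begin{itemize}
\item For $r=1$ you always have $n\ge r$. Your AM--GM plus concave-Jensen step then gives $\mathbb{E}(|\boldsymbol{Y}_t|^{-1}\mid\boldsymbol{Y}_{t-1})\le\beta\,|\boldsymbol{Y}_{t-1}|^{-1}$, and induction yields the lemma for $r=1$. This is the multi-type form of Heyde's bound.
\item For general $r$, your first one-step estimate gives the correct replacement $\mathbb{E}(|\boldsymbol{Y}_t|^{-r})\le(\sup_{i\in S}\mathbb{E}(|\boldsymbol{Y}_1^i|^{-r}))^t$. Alternatively, apply convexity of $x\mapsto x^{-r}$ to the average of the $n$ family sizes; this needs no coupling across types.
\item Using $|\boldsymbol{Y}_t|\ge1$ gives the cruder bound $\mathbb{E}(|\boldsymbol{Y}_t|^{-r})\le\beta^t$.
\end{itemize}
The exponent $tr$ with base $\beta$ is not attainable. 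The $r=k$ instance that feeds $C_6^{\epsilon}$ in Corollary \ref{corr:mainresult} would have to be replaced by one of these bounds.
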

\begin{proof}
Denote by $\boldsymbol{Y}_{t-1, 1, s}^i$ the transformed branching process started from an ancestor of type $i$ with index $s$ in generation $t-1$, and allowed to evolve for one generation. Using the tower rule, let us condition on the population size at generation $t-1$. For $r \in \mathbb{N}_+$,
\begin{align}
    \mathbb{E}\left(\frac{1}{|\boldsymbol{Y}_t|^r}\right) %\nonumber \\
    %&= \mathbb{E}\left(\mathbb{E}\left(\frac{|\boldsymbol{Y}_{t-1}|^r}{|\boldsymbol{Y}_{t-1}|^r}\frac{1}{|\boldsymbol{Y}_t|^r}\bigm| \boldsymbol{Y}_{t-1}\right)\right) \nonumber \\
    &= \mathbb{E}\left(\frac{1}{|\boldsymbol{Y}_{t-1}|^r}\mathbb{E}\left(\frac{|\boldsymbol{Y}_{t-1}|^r}{|\boldsymbol{Y}_t|^r}\bigm| \boldsymbol{Y}_{t-1}\right)\right) \nonumber \\
    &= \mathbb{E}\left(\frac{1}{|\boldsymbol{Y}_{t-1}|^r}\mathbb{E}\left(\frac{|\boldsymbol{Y}_{t-1}|^r}{(\sum_{i \in S}\sum_{s=1}^{Y_{t-1}^i}|\Y|)^r}\bigm| \boldsymbol{Y}_{t-1}\right)\right) \label{eq:arithmetic_mean} \\
    &= \mathbb{E}\left(\frac{1}{|\boldsymbol{Y}_{t-1}|^r}\mathbb{E}\left(\frac{|\boldsymbol{Y}_{t-1}|^r}{\sum_{i_1 \in S}\sum_{s_1=1}^{Y_{t-1}^{i_1}} \dots \sum_{i_r \in S}\sum_{s_r=1}^{Y_{t-1}^{i_r}}\prod_{j=1}^r|\Y[j]|}\bigm| \boldsymbol{Y}_{t-1}\right)\right), \label{eq:expanded} 
\end{align}
where \eqref{eq:arithmetic_mean} uses a decomposition analogous to \eqref{eq:Zdecomposition}, i.e.\ 
%comes from expressing the full population size in terms of families started at generation $t-1$ and ran for one generation, i.e. 
$
\boldsymbol{Y}_t = \sum_{i \in S}\sum_{s=1}^{Y_{t-1}^i}\Y
$.
%and \eqref{eq:expanded} expands the $r$th power of $\sum_{i \in S}\sum_{s=1}^{Y_{t-1}^i}|\Y|$.
The expression in the inner expectation in \eqref{eq:expanded} is the reciprocal of the arithmetic mean of the collection of $\prod_{j=1}^r|\Y[j]\vert$.
%\begin{equation*}
%    \frac{\sum_{i_1 \in S}\sum_{s_1=1}^{Y_{t-1}^{i_1}} \dots \sum_{i_r \in S}\sum_{s_r=1}^{Y_{t-1}^{i_r}}\prod_{j=1}^r|\Y[j]|}{|\boldsymbol{Y}_{t-1}|^r}
%\end{equation*}
We will use the fact that the arithmetic mean is always greater than or equal to the harmonic mean (and hence the inverse of harmonic mean is greater than or equal to the inverse of the arithmetic mean);
here
\begin{equation}
   \frac{|\boldsymbol{Y}_{t-1}|^r}{\sum_{i_1 \in S}\sum_{s_1=1}^{Y_{t-1}^{i_1}} \dots \sum_{i_r \in S}\sum_{s_r=1}^{Y_{t-1}^{i_r}}\prod_{j=1}^r|\Y[j]|} 
   \le \frac{\sum_{i_1 \in S}\sum_{s_1=1}^{Y_{t-1}^{i_1}} \dots \sum_{i_r \in S}\sum_{s_r=1}^{Y_{t-1}^{i_r}}\frac{1}{\prod_{j=1}^r|\Y[j]|}}{|\boldsymbol{Y}_{t-1}|^r}. \label{eq:harmonic_arithmetic}
\end{equation}
The harmonic mean is defined since we know $|\Y[j]| > 0 $ for any $s_j \in \mathbb{N}_+$ and $i_j \in S$ as by construction of the Harris--Savastyanov transformation the extinction probability is 0.

Let us use inequality \eqref{eq:harmonic_arithmetic} to bound \eqref{eq:expanded}:
\begin{align}
\mathbb{E}\left(\frac{1}{|\boldsymbol{Y}_t|^r}\right)
 %&=\mathbb{E}\left(\frac{1}{|\boldsymbol{Y}_{t-1}|^r}\mathbb{E}\left(\frac{|\boldsymbol{Y}_{t-1}|^r}{\sum_{i_1 \in S}\sum_{s_1=1}^{Y_{t-1}^{i_1}} \dots \sum_{i_r \in S}\sum_{s_r=1}^{Y_{t-1}^{i_r}}\prod_{j=1}^r|\Y[j]|}| \boldsymbol{Y}_{t-1}\right)\right) \nonumber \\
&\le \mathbb{E}\left(\frac{1}{|\boldsymbol{Y}_{t-1}|^r}\mathbb{E}\left(\frac{\sum_{i_1 \in S}\sum_{s_1=1}^{Y_{t-1}^{i_1}} \dots \sum_{i_r \in S}\sum_{s_r=1}^{Y_{t-1}^{i_r}}\frac{1}{\prod_{j=1}^r|\Y[j]|}}{|\boldsymbol{Y}_{t-1}|^r}| \boldsymbol{Y}_{t-1}\right)\right) \nonumber \\
& =\mathbb{E}\left(|\boldsymbol{Y}_{t-1}|^{-2r}\sum_{i_1 \in S}\sum_{s_1=1}^{Y_{t-1}^{i_1}} \dots \sum_{i_r \in S}\sum_{s_r=1}^{Y_{t-1}^{i_r}}\mathbb{E}\left(\frac{1}{\prod_{j=1}^r|\Y[j]|} \mid \boldsymbol{Y}_{t-1}\right)\right) \label{eq:tonelli}\\ 
& =\mathbb{E}\left(|\boldsymbol{Y}_{t-1}|^{-2r}\sum_{i_1 \in S}\sum_{s_1=1}^{Y_{t-1}^{i_1}} \dots \sum_{i_r \in S}\sum_{s_r=1}^{Y_{t-1}^{i_r}}\prod_{j=1}^r\mathbb{E}\left(|\Y[j]|^{-1} \mid \boldsymbol{Y}_{t-1}\right)\right)\label{eq:independenceY}
\end{align}%
where \eqref{eq:tonelli} follows by Tonelli's Theorem and \eqref{eq:independenceY} is due to the independence of the branching processes started from each individual at generation $t-1$. Observe that the branching processes started at generation $t-1$, $\{\Y[j]\}_{j \in [r]}$, are independent of the population size at time $t-1$ and thus the conditional expectation in \eqref{eq:independenceY} is equal to the unconditional expectation:
\begin{align}
&\mathbb{E}\left(|\boldsymbol{Y}_{t-1}|^{-2r}\sum_{i_1 \in S}\sum_{s_1=1}^{Y_{t-1}^{i_1}} \dots \sum_{i_r \in S}\sum_{s_r=1}^{Y_{t-1}^{i_r}}\prod_{j=1}^r\mathbb{E}\left(|\Y[j]|^{-1} \mid \boldsymbol{Y}_{t-1}\right)\right) \nonumber\\ 
& =\mathbb{E}\left(|\boldsymbol{Y}_{t-1}|^{-2r}\sum_{i_1 \in S}\sum_{s_1=1}^{Y_{t-1}^{i_1}} \dots \sum_{i_r \in S}\sum_{s_r=1}^{Y_{t-1}^{i_r}}\prod_{j=1}^r\mathbb{E}\left(|\boldsymbol{Y}_{t-1,1,1}^{i_j}|^{-1}\right)\right) \label{eq:drop_index}\\
& \le\mathbb{E}\left(|\boldsymbol{Y}_{t-1}|^{-2r}\sum_{i_1 \in S}\sum_{s_1=1}^{Y_{t-1}^{i_1}} \dots \sum_{i_r \in S}\sum_{s_r=1}^{Y_{t-1}^{i_r}}\left(\mathbb{E}\left(\sup_{j \in [r]}|\boldsymbol{Y}_{t-1,1,1}^{i_j}|^{-1}\right)\right)^r\right), \label{eq:sup} 
\end{align}
where \eqref{eq:drop_index} follows from the fact that for a given type $i_j$, $\mathbb{E}(|\Y[j]|^{-1}) = \mathbb{E}(|\boldsymbol{Y}_{t-1,1,1}^{i_j}|^{-1})$ and \eqref{eq:sup} upper bounds each element of the product by its supremum over all types.

Note that the supremum over types $\{i_j\}_{j=1}^r$ in \eqref{eq:sup} can be upper bounded by the supremum over \emph{all} possible types:
\begin{align}
&\mathbb{E}\left(|\boldsymbol{Y}_{t-1}|^{-2r}\sum_{i_1 \in S}\sum_{s_1=1}^{Y_{t-1}^{i_1}} \dots \sum_{i_r \in S}\sum_{s_r=1}^{Y_{t-1}^{i_r}}\left(\mathbb{E}\left(\sup_{j \in [r]}|\boldsymbol{Y}_{t-1,1,1}^{i_j}|^{-1}\right)\right)^r\right) \nonumber\\
    & \le  \mathbb{E}\left(|\boldsymbol{Y}_{t-1}|^{-2r}\sum_{i_1 \in S}\sum_{s_1=1}^{Y_{t-1}^{i_1}} \dots \sum_{i_r \in S}\sum_{s_r=1}^{Y_{t-1}^{i_r}}\left(\mathbb{E}\left(\sup_{j \in S}|\boldsymbol{Y}^j_{t-1, 1, 1}|^{-1}\right)\right)^r\right)  \nonumber \\
    & = \mathbb{E}\left(|\boldsymbol{Y}_{t-1}|^{-r}\left(\mathbb{E}\left(\sup_{j \in S}|\boldsymbol{Y}^j_{t-1, 1, 1}|^{-1}\right)\right)^r\right) \nonumber\\
    & = \mathbb{E}\left(|\boldsymbol{Y}_{t-1}|^{-r}\right)\left(\mathbb{E}\left(\sup_{j \in S}|\boldsymbol{Y}^j_{t-1, 1, 1}|^{-1}\right)\right)^r = \mathbb{E}\left(|\boldsymbol{Y}_{t-1}|^{-r}\right)\left(\mathbb{E}\left(\sup_{j \in S}|\boldsymbol{Y}_1^j|^{-1}\right)\right)^r  \label{eq:split2}%\label{eq:gent1_vs1}
    \end{align}
where the first equality in \eqref{eq:split2} follows by the independence of the two quantities and the second is due to the equality in law of $|\boldsymbol{Y}^j_{t-1, 1, 1}|^{-1}$ and $|\boldsymbol{Y}_1^j|^{-1}$.

Iterating this procedure over $t$, we obtain the desired inequality. %get an upper bound for the $r$th harmonic moment of $|\boldsymbol{Y}_t|$ that depends only upon the properties of the process at generation $1$ 
%\begin{equation*}
%    \mathbb{E}\left(\frac{1}{|\boldsymbol{Y}_t|^r}\right) \le   \left(\mathbb{E}\left(\sup_{i \in S}\frac{1}{|\boldsymbol{Y}_{1}^{i}|^r}\right)\right)^t.
%\end{equation*}
\end{proof}
For the lower bound of Theorem \ref{hs_thm}, we find an analogous lower bound on $\mathbb{E}(|\boldsymbol{Y}_1|^r)^{-1}$.
\begin{lemma}\label{y_lower}
Under Assumptions \ref{infinite_matrix} and \ref{R-positive}, for 
$r \in \mathbb{N}_+$, 
\begin{equation*}
    \left(\mathbb{E}\left(\sup_{i \in S}|\boldsymbol{Y}_{1}^i|\right)\right)^{-tr}   \le \frac{1}{\mathbb{E}(|\boldsymbol{Y}_t|^r)}.
    \end{equation*}
\end{lemma}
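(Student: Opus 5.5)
The plan is to mirror the proof of Lemma \ref{y_upper}: condition on generation $t-1$, show that one generation multiplies the $r$th moment by at most $m^r$, where $m:=\mathbb{E}(\sup_{i \in S}|\boldsymbol{Y}_1^i|)$, and then iterate. Concretely, I aim for
\begin{equation*}
\mathbb{E}(|\boldsymbol{Y}_t|^r) \le m^{r}\,\mathbb{E}(|\boldsymbol{Y}_{t-1}|^r).
\end{equation*}
Iterating from $|\boldsymbol{Y}_0|=1$ gives $\mathbb{E}(|\boldsymbol{Y}_t|^r)\le m^{tr}$, and taking reciprocals yields the stated inequality. If $m=\infty$ the bound is trivial, so I assume $m<\infty$ throughout.

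For the one-step bound, I use the decomposition $\boldsymbol{Y}_t=\sum_{i\in S}\sum_{s=1}^{Y_{t-1}^i}\Y$ already used in the proof of Lemma \ref{y_upper}. Expanding the $r$th power as in \eqref{eq:expanded} writes $|\boldsymbol{Y}_t|^r$ as a sum over $r$-tuples $(i_1,s_1),\dots,(i_r,s_r)$ of the products $\prod_{j=1}^r|\Y[j]|$. There are exactly $|\boldsymbol{Y}_{t-1}|^r$ such tuples.

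I then condition on $\boldsymbol{Y}_{t-1}$ and apply Tonelli, as in \eqref{eq:tonelli}. The one-generation families are independent of each other and of $\boldsymbol{Y}_{t-1}$, and each $|\Y[j]|$ is dominated by a copy of $\sup_{i\in S}|\boldsymbol{Y}_1^i|$. Hence every tuple whose indices $(i_j,s_j)$ are pairwise distinct contributes at most $m^r$. If every tuple could be bounded this way, summing would give at most $|\boldsymbol{Y}_{t-1}|^r m^r$, and taking the outer expectation gives the one-step bound.

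The main obstacle is the tuples with repeated indices, and I expect it to break the argument. When a family appears $\ell\ge2$ times, its factor is $|\Y[j]|^{\ell}$, and its expectation is an $\ell$th moment, not $m^{\ell}$. By Jensen, $\mathbb{E}(|\boldsymbol{Y}_1^i|^\ell)\ge(\mathbb{E}|\boldsymbol{Y}_1^i|)^\ell$, with strict inequality unless $|\boldsymbol{Y}_1^i|$ is degenerate. In fact, already at $t=1$ with $r\ge2$ the statement would require $\mathbb{E}(|\boldsymbol{Y}_1|^r)\le m^r$. That holds, for instance, when the offspring sizes are deterministic, but it fails in general.

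To handle these terms, I would first try to show that they are dominated by the distinct-index terms. One count is $|\boldsymbol{Y}_{t-1}|^r$ tuples with $(|\boldsymbol{Y}_{t-1}|)_r$ of them distinct, and I would try to use the surplus factor $m\ge1$, coming from the non-extinction of $\boldsymbol{Y}$, to absorb the diagonal contributions. If that fails, I would state the lemma with the additional hypothesis that the $|\boldsymbol{Y}_1^i|$ are almost surely constant. Alternatively, I would replace $m^{r}$ by $\mathbb{E}(\sup_{i\in S}|\boldsymbol{Y}_1^i|^r)$ in the one-step bound. In that case the repeated-index terms are handled by H\"older's inequality, since each product of factors with total degree $r$ has expectation at most $\mathbb{E}(\sup_{i\in S}|\boldsymbol{Y}_1^i|^r)$. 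The rest of the iteration then goes through unchanged.
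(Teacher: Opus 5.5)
Your diagnosis is right, and it is exactly where the paper's own proof breaks. The paper argues as you plan to: tower rule, decomposition of $\boldsymbol{Y}_t$ into one-generation families, and expansion of the $r$th power into $|\boldsymbol{Y}_{t-1}|^r$ products. It then replaces $\mathbb{E}\bigl(\prod_{j=1}^r|\boldsymbol{Y}^{i_j}_{t-1,1,s_j}|\bigm|\boldsymbol{Y}_{t-1}\bigr)$ by $\prod_{j=1}^r\mathbb{E}\bigl(|\boldsymbol{Y}^{i_j}_{t-1,1,s_j}|\bigm|\boldsymbol{Y}_{t-1}\bigr)$ for \emph{every} tuple. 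On tuples with a repeated $(i_j,s_j)$ this factorisation is false, because the left side is then a higher moment of a single family.

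The lemma is in fact false for $r\ge2$. Give a concrete instance rather than ``fails in general'', since in the multi-type case the supremum could a priori compensate for Jensen. The setup allows $d=1$, and Assumptions~\ref{infinite_matrix}--\ref{R-positive} hold for any supercritical single-type process. Take offspring $1$ or $3$ w.p.\ $\tfrac12$ each. Then $\boldsymbol{q}=\boldsymbol{0}$, $\boldsymbol{Y}=\boldsymbol{Z}$, the supremum is trivial, and $\mathbb{E}(Y_t^2)=4^t+2^{t-1}(2^t-1)>4^t=(\mathbb{E}\,Y_1)^{2t}$ for every $t\ge1$.

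A genuinely two-type counterexample also exists, valid however $\sup_i|\boldsymbol{Y}_1^i|$ is coupled. Let each individual have $1$ child w.p.\ $0.99$ and $10$ w.p.\ $0.01$, each child independently of type $1$ or $2$ with probability $\tfrac12$. This process is supercritical with mean $1.09$ and has $\boldsymbol{q}=\boldsymbol{0}$. Then $\mathbb{E}(|\boldsymbol{Y}_1^{i}|^2)=1.99$, whereas $\mathbb{E}(\sup_i|\boldsymbol{Y}_1^i|)\le 1+9\cdot 0.02=1.18$ and $1.18^2<1.99$.

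So drop your first repair, absorbing the diagonal terms via $m\ge1$: no argument can succeed. Your third repair is the correct salvage, and it fully proves the corrected statement
\[
\bigl(\mathbb{E}(\sup_{i\in S}|\boldsymbol{Y}_1^i|^r)\bigr)^{-t}\le 1/\mathbb{E}(|\boldsymbol{Y}_t|^r).
\]
For any tuple, repeated or not, the generalised H\"older inequality gives
\[
\mathbb{E}\Bigl(\prod_{j=1}^r|\boldsymbol{Y}^{i_j}_{t-1,1,s_j}|\Bigm|\boldsymbol{Y}_{t-1}\Bigr)\le\prod_{j=1}^r\Bigl(\mathbb{E}\bigl(|\boldsymbol{Y}^{i_j}_{t-1,1,s_j}|^r\bigm|\boldsymbol{Y}_{t-1}\bigr)\Bigr)^{1/r}\le\sup_{i\in S}\mathbb{E}\bigl(|\boldsymbol{Y}_1^i|^r\bigr).
\]
Hence $\mathbb{E}(|\boldsymbol{Y}_t|^r)\le\sup_{i}\mathbb{E}(|\boldsymbol{Y}_1^i|^r)\,\mathbb{E}(|\boldsymbol{Y}_{t-1}|^r)$. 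Iterate and use $\sup_i\mathbb{E}(|\boldsymbol{Y}_1^i|^r)\le\mathbb{E}(\sup_i|\boldsymbol{Y}_1^i|^r)$. The convexity bound $(\sum_{f=1}^nX_f)^r\le n^{r-1}\sum_{f=1}^nX_f^r$ gives the same result without expanding.

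This correction weakens the lower bound of Theorem~\ref{hs_thm} to $(1-\sup_iq_i)^r\bigl(\mathbb{E}(\sup_i|\boldsymbol{Y}_1^i|^r)\bigr)^{-t}$. Lemma~\ref{y_upper}, which you are mirroring, has the same flaw in the harmful direction, since $\mathbb{E}(X^{-2})\ge(\mathbb{E}X^{-1})^2$ on the diagonal. It is false for $r\ge2$ by the same single-type example: $5/9>4/9$ at $t=1$. It is repaired by AM--HM plus the power-mean inequality, giving $\bigl(\mathbb{E}(\sup_i|\boldsymbol{Y}_1^i|^{-r})\bigr)^{t}$.
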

\begin{proof}
This proof is analogous to the upper bound of Lemma~\ref{y_upper}. We use the tower rule to express $|\boldsymbol{Y}_t|^r$ in terms of the offspring of families from time $t-1$:
   \begin{align*}
        \mathbb{E}\left(|\boldsymbol{Y}_t|^r\right) %\\
        &= \mathbb{E}\left(\mathbb{E}\left(|\boldsymbol{Y}_t|^r\bigm|\boldsymbol{Y}_{t-1}\right)\right) \\
        % Y_t is a sum of Y_1
        & = \mathbb{E}\left(\mathbb{E}\left(\left(\sum_{i \in S}\sum_{s=1}^{Y_{t-1}^i} |\Y|\right)^r \bigm|\boldsymbol{Y}_{t-1}\right)\right) \\
        % expanded power
        & = \mathbb{E}\left(\mathbb{E}\left(\sum_{i_1 \in S}\sum_{s_1=1}^{Y_{t-1}^{i_1}} \dots \sum_{i_r \in S}\sum_{s_r=1}^{Y_{t-1}^{i_r}} \prod_{j=1}^r |\boldsymbol{Y}_{t-1, 1,s_j}^{i_j}| \bigm|\boldsymbol{Y}_{t-1}\right)\right) \\
        % moved expectation
  &  =\mathbb{E}\left(\sum_{i_1 \in S}\sum_{s_1=1}^{Y_{t-1}^{i_1}} \dots \sum_{i_r \in S}\sum_{s_r=1}^{Y_{t-1}^{i_r}}\prod_{j=1}^r \mathbb{E}\left( |\boldsymbol{Y}_{t-1, 1,s_j}^{i_j}|\bigm|\boldsymbol{Y}_{t-1} \right)\right) \\
    &  =\mathbb{E}\left(\sum_{i_1 \in S}\sum_{s_1=1}^{Y_{t-1}^{i_1}} \dots \sum_{i_r \in S}\sum_{s_r=1}^{Y_{t-1}^{i_r}}\prod_{j=1}^r \mathbb{E}\left( |\boldsymbol{Y}_{t-1, 1,1}^{i_j}|\bigm|\boldsymbol{Y}_{t-1} \right)\right) \\
  % supremum
   &  \le \mathbb{E}\left(\sum_{i_1 \in S}\sum_{s_1=1}^{Y_{t-1}^{i_1}} \dots \sum_{i_r \in S}\sum_{s_r=1}^{Y_{t-1}^{i_r}}\left(\mathbb{E}\left( \sup_{i \in S}  |\boldsymbol{Y}_{t-1,1,1}^{i}| \bigm|\boldsymbol{Y}_{t-1}\right)\right)^r\right) \\
   % no sums
        & = \mathbb{E}\left(|\boldsymbol{Y}_{t-1}|^r\right)\left(\mathbb{E}\left(\sup_{i \in S}|\boldsymbol{Y}_{t-1,1,1}^i|\right)\right)^r  %\label{eq:independence_t1} \\
        = \mathbb{E}\left(|\boldsymbol{Y}_{t-1}|^r\right)\left(\mathbb{E}\left(\sup_{i \in S}|\boldsymbol{Y}_{1}^i|\right)\right)^r        .
   \end{align*}
    Iterating this procedure, we get
    \begin{equation*}
        \mathbb{E}(|\boldsymbol{Y}_t|^r) \le \left(\mathbb{E}(\sup_{i \in S}|\boldsymbol{Y}_{1}^i|)\right)^{tr},
    \end{equation*}
    and the result follows immediately.
    %% Which is equivalent to 
    %% \begin{equation*}
    %%   \left(\mathbb{E}(\sup_{i \in S}|\boldsymbol{Y}_{1}^i|^r)\right)^{-t}   \le \frac{1}{\mathbb{E}(|\boldsymbol{Y}_t|^r)}.
    %% \end{equation*}
\end{proof}
Finally, combining Lemmata \ref{t_and_infinity}, \ref{z_and_y_upper}, \ref{z_and_y_lower}, \ref{y_upper}, and \ref{y_lower}, we get a lower bound for the $r$th harmonic moment of $|\boldsymbol{Z}_t|$:
\begin{equation*}
    (1-\sup_{i \in S} q_i)^r 
\left(  \mathbb{E}(\sup_{i \in S}|\boldsymbol{Y}_1^i|) \right)^{-tr} \le 
    % middle 
    \mathbb{E}\left(\frac{1}{|\boldsymbol{Z}_t|^{r}}\bigm| |\boldsymbol{Z}_{\infty}| > 0\right),
\end{equation*}
and an upper bound
\begin{equation}
    % right
      \mathbb{E}\left(\frac{1}{|\boldsymbol{Z}_t|^r}\bigm| |\boldsymbol{Z}_{\infty}| > 0\right)  \le  \frac{1-q_{i_0}}{    (1-\sup_{i \in S}q_i)^2(1-f_t^{i_0}(\boldsymbol{0}))  } \left( \mathbb{E}\left(\sup_{i \in S}\frac{1}{|\boldsymbol{Y}_1^i|}\right)\right)^{tr}. \label{eq:with_f}
\end{equation}
To remove the dependence of \eqref{eq:with_f} on iterates of the generating functions, we can use the fact that $f_t^{i_0}(\boldsymbol{0}) \le q_{i_0}$ and thus $(1-q_{i_0})(1-f_t^{i_0}(0))^{-1} \leq 1$,  to arrive at the final expression 
\begin{equation*}
    % middle 
    \mathbb{E}\left(\frac{1}{|\boldsymbol{Z}_t|^r}\bigm| |\boldsymbol{Z}_{\infty}| > 0\right) 
    % right
    \le (1-\sup_{i \in S}q_i)^{-2} \left(\mathbb{E}\left(\sup_{i \in S} \frac{1}{|\boldsymbol{Y}_1^i|}\right)\right)^{tr}.
\end{equation*}
\section{Numerical Results} \label{sec:numerical}
We now show that the theoretical developments above allow us to approximate quantities relating to the branching process computationally. Python and Julia code to reproduce the results presented here is available at \url{https://github.com/mandarynka033/coalescence_supercritical}. Numerical results will be presented for a finite number of types, $d < \infty$. In this case, denote by $\lambda$ the leading eigenvalue of $\boldsymbol{M}$. By the Perron--Frobenius theorem for non-negative matrices, $\lambda > 0$ with an associated strictly positive left, $\boldsymbol{\nu}$, and right, $\boldsymbol{u}$, eigenvector \cite[Chapter 1.1]{seneta}. Eigenvectors are normalised so that  $\boldsymbol{u}\cdot \boldsymbol{1} =1 $ and $ \boldsymbol{u} \cdot \boldsymbol{\nu} =1$. The process is supercritical if $\lambda > 1$. 

Theorems \ref{exact_thm}, \ref{harmonic_thm}, and \ref{hs_thm} still hold for a finite number of types under their respective assumptions summarised in Section \ref{main_results}.
However, certain requirements of Assumption \ref{R-positive} are redundant in the finite case, such as the requirement for the finiteness of $\boldsymbol{u} \cdot \boldsymbol{\nu}$ and $|\boldsymbol{\nu}|$. Hence, for clarity, we state sufficient conditions for Theorem \ref{exact_thm} to hold in the case $d<\infty$. 
\begin{assumption} \label{finite_types}
The process $\boldsymbol{Z}$ is supercritical, $\boldsymbol{M}$ is irreducible, and for all $i,j  \in [d]$
\begin{equation*}
    \mathbb{E}(Z_1^{i,j} \log^+ Z_1^{i,j})< \infty.
\end{equation*}
\end{assumption}
Under Assumption \ref{finite_types}, when $d <\infty$, Assumptions \ref{infinite_matrix} and \ref{R-positive} are verified, and hence by Theorem~\ref{exact_thm}:
\begin{equation}
 \lim_{T \rightarrow \infty}\mathbb{P}(X_{T,k} < t\bigm| |\boldsymbol{Z}_T| \ge k ) = 
     1 - \mathbb{E}\left(\frac{\sum_{i=1}^d\sum_{s=1}^{\z} (W_{t,s}^{(i)})^k }
    % denominator
    {(\sum_{i=1}^d\sum_{s=1}^{\z}W_{t,s}^{(i)})^k}\bigm| |\boldsymbol{Z}_{\infty}| > 0  \right) \label{eq:thm_finite}.
\end{equation}
The right-hand side of \eqref{eq:thm_finite} will be approximated numerically by Monte Carlo sampling of $W^{(i)}$.
\subsection{Approximate sampling from the law of $W^{(j)}$}
The imaginary unit $\sqrt{-1}$ will be denoted by $\iota$. For $j \in [d]$, let us introduce the following integral transform $\phi_j : \{s \in \mathbb{C}: \operatorname{Re}(s)\ge 0\} \rightarrow \mathbb{C}$:
\begin{equation*}
    \phi_j(s) := \mathbb{E}(e^{-sW^{(j)}}),
\end{equation*}
and let $\boldsymbol{\phi}(s):= (\phi_j(s))_{j=1}^d$. 
%\subsubsection{Moments of $W$}
The following identity \cite[Chapter 5.6]{athreya_ney} relating $\boldsymbol\phi$ and the generating function $\boldsymbol{f}$, for $s$ with $\operatorname{Re}(s) \ge 0$, will be instrumental in evaluating moments of $W^{(j)}$, 
\begin{equation}
    \phi_j(\lambda s) = f^j(\boldsymbol{\phi}(s)) \label{eq:phi_and_f}.
\end{equation}
Under Assumption \ref{moments}, that $2k$ finite moments exist for $Z_1^{i,j}$, for any $j \in [d]$, we approximate $\phi_j(s)$ with its $2k$th order Taylor polynomial approximation around 0:
\begin{equation}
    \phi_j(s) =\mathbb{E}(e^{-sW^{(j)}}) \approx  \sum_{n=0}^{2k} \frac{d^n \phi_j}{d s^n}(0) \frac{1}{n!}(-s)^n = \sum_{n=0}^{2k} \frac{\mathbb{E}((W^{(j)})^n)}{n!} (-s)^n \label{eq:approximation}.
\end{equation}
By Theorem \ref{moy}, $\mathbb{E}(W^{(j)})$ is known for all $j \in S$. For higher moments, $n \in \{2,\dots,2k\}$, $\mathbb{E}((W^{(j)})^n)$ can be approximated by recursively calculating derivatives of \eqref{eq:phi_and_f} symbolically using the Python library SymPy \cite{software:sympy} and evaluating them at 0, which is described step by step below. 
\begin{enumerate}
    \item For each $n \in \{2,\dots,2k\}$ and $j \in [d]$, compute the $n$th derivative of \eqref{eq:phi_and_f} by applying the multivariate Faa di Bruno formula \cite{faa} to the right hand-side
\begin{align}
\frac{d^n \phi_j(\lambda s)}{d s^n} &= \frac{d^n f^j(\boldsymbol{\phi}(s))}{d s^n}  \label{eq:n_derivative}
\end{align}
   and evaluate the derivative in \eqref{eq:n_derivative} at 0 to obtain
    \begin{equation}
    \lambda^n \mathbb{E}((W^{(j)})^n) = \frac{d^n f^j(\boldsymbol{\phi}(s))}{d s^n}\Biggm|_{s=0} \label{eq:set_equations}.
\end{equation}
\item Having previously computed the first $n-1$ moments $\mathbb{E}((W^{(j)})^l)$ for each $j\in[d]$ and each $l \in\{1,\dots,n-1\}$, we can solve the set of equations \eqref{eq:set_equations} to obtain $\mathbb{E}((W^{(j)})^n)$ for each $j\in[d]$.
\end{enumerate}
%\subsubsection{Density of $W$}
Under Assumption \ref{finite_types}, for each $j \in [d]$, there exists a strictly positive function $w_j$ on $(0, \infty)$ \cite[Chapter 5.6]{athreya_ney} such that for any $0 < a<b< \infty$
\begin{equation*}
    \mathbb{P}(a < W^{(j)} \le b) = \int_a^b w_j(x) dx.
\end{equation*}
 Observe that for all $j \in [d]$ the law of $W^{(j)}$ has an atom at 0:
\begin{equation}
    \mathbb{P}(W^{(j)} = 0) = q_j.
\end{equation}
By Lebesgue's decomposition theorem \cite[Theorem 2.10, p.\ 40]{kallenberg}, the law of $W^{(j)}$ can be decomposed into a density $w_j$ on $(0, \infty)$ and an atom at $0$.
It is more numerically efficient to consider the density of $W^{(j)}$ conditioned on non-extinction separately from the case where the population becomes extinct: %For any $j \in [d]$
\begin{align}
    \mathbb{E}(e^{-\iota zW^{(j)}}) &= q_j +(1-q_j)  \mathbb{E}(e^{-\iota zW^{(j)}} \bigm| W^{(j)} > 0) \notag\\
\Longleftrightarrow\qquad    (1-q_j)  \mathbb{E}(e^{- \iota zW^{(j)}}\bigm| W^{(j)} > 0) &= \mathbb{E}(e^{-\iota zW^{(j)}})  - q_j \label{eq:non_extinction}.
\end{align}

The right-hand side of \eqref{eq:non_extinction}, will be inverted with the inverse discrete Fourier transform to obtain the relevant density. The procedure is described in detail in Algorithm \ref{alg:density}. 
\begin{definition}[Inverse discrete Fourier transform, \cite{fourier}]
     For $M \in \mathbb{N}_+$ equally spaced evaluations of the characteristic function $\{\hat{x}_j\}_{j=0}^{M-1}$ on $[-a, a]$, $a \in \mathbb{R}_+$, we obtain $M$ equally spaced values from the density $\{x_j\}_{j=0}^{M-1}$ on $[-\frac{M-1}{2a}, \frac{M-1}{2a}]$ by computing for $j \in \{0,\dots,M-1\}$
\begin{equation}
    x_j = \frac{1}{M} \sum_{l=0}^{M-1} \hat{x}_l \exp{\left(\frac{2\pi \iota}{M} lj\right)}. \label{eq:fourier}
\end{equation}
\end{definition}
\begin{algorithm}[t]
\caption{$W^{(j)}$ density approximation}
\label{alg:density}
\begin{algorithmic}[1]
\REQUIRE small $z > 0$, $L \in \mathbb{N}_+$ large enough to observe the decay of the characteristic function to 0, $N \in \mathbb{N}_+$ large enough cover the non-trivial regions of the density, $M \in \mathbb{N}_+$ large enough to have sufficient granularity of the approximated characteristic function, $\lambda$ the leading eigenvalue of $\boldsymbol{M}$.
\FOR{$j=1$ \TO $d$}
\STATE Approximate the value of the characteristic function, $\phi_j(iz)$, near 0 with the Taylor approximation \eqref{eq:approximation} evaluated on $N$ equally spaced points on $[z, \lambda z]$ and $N$ points on $[ -\lambda z, -z]$. 
\ENDFOR
\FOR{$l = 1$ \TO $L-1$}
    \STATE Use the $2N$ approximate values of $\phi_j(\iota z), j \in [d]$, on $\pm[\lambda^{l-1}, \lambda^{l} z]$ to calculate $N$ values of the characteristic function on $\pm[\lambda^{l}z, \lambda^{l+1} z]$ using the identity \eqref{eq:phi_and_f}. 
\ENDFOR
\FOR{$j=1$ \TO $d$}
\STATE Interpolate the characteristic function approximation $\phi_j(\iota z)$ linearly to get $M$ equally spaced points $\{\hat{x}_l\}_{l=0}^{M-1}$ on $[-\lambda^{L}z, \lambda^{L} z]$.
\FOR{$l=0$ \TO $M-1$} 
\STATE Compute the characteristic function conditioned on non-extinction, $\hat{x}_l \leftarrow \hat{x}_l  - q_j$.
\ENDFOR
\STATE Using $\{\hat{x}_l\}_{l=0}^{M-1}$, compute the discrete inverse Fourier transform \eqref{eq:fourier} to obtain $M$ values of the density approximation $\{x_l\}_{l=0}^{M-1}$ of $W^{(j)}$ on [$-\frac{M-1}{\lambda^L z}, \frac{M-1}{\lambda^L z}$] spaced $\frac{2}{\lambda^L z}$ apart.
\ENDFOR
\end{algorithmic}
\end{algorithm}

% Inverting the characteristic function will give us the density of W. A standard inverse fourier transform requires equally spaced points, whereas our points get exponentially further away from each other - $z, \lambda z, \lambda^2 z, \dots$. 
Let $\boldsymbol{w}:= (\tilde{w}_j)_{j=1}^d$ be the set of approximate densities computed using Algorithm \ref{alg:density}. Each $W^{(j)}$ can be sampled using composition sampling (see Algorithm~\ref{alg:samples} for a more detailed description).  

\begin{example}\label{poisson}
    Let us demonstrate the performance of Algorithm \ref{alg:density} on a $d=2$-type system with Poisson distributed offspring (Table \ref{tab:poisson}). The maximal eigenvalue is $\lambda \approx 1.81$ and $\boldsymbol{q} \approx \{0.36, 0.23\}$.
        \begin{table}[h]
    \centering
    \begin{tabular}{ccc}
         & \bf Offspring of type 1 & \bf Offspring of type 2 \\ 
        \hline
        \bf Parent of type 1 & Poisson($1$) & Poisson($0.5$) \\
        \bf Parent of type 2 &  Poisson($0.5$) &  Poisson($1.5$)
    \end{tabular}
    \caption{\label{tab:poisson}Offspring distributions for a slightly supercritical system.}
\end{table}
For $j \in [2]$, the quantity $\phi_j(\iota z) - q_j$ and the density of $W^{(j)}$ conditioned on non-extinction, obtained with Algorithm \ref{alg:density}, are illustrated in Figures \ref{fig:characteristic} and \ref{fig:density} respectively. 
\begin{figure}[t]
    \centering
    \includegraphics[width=0.7\linewidth]{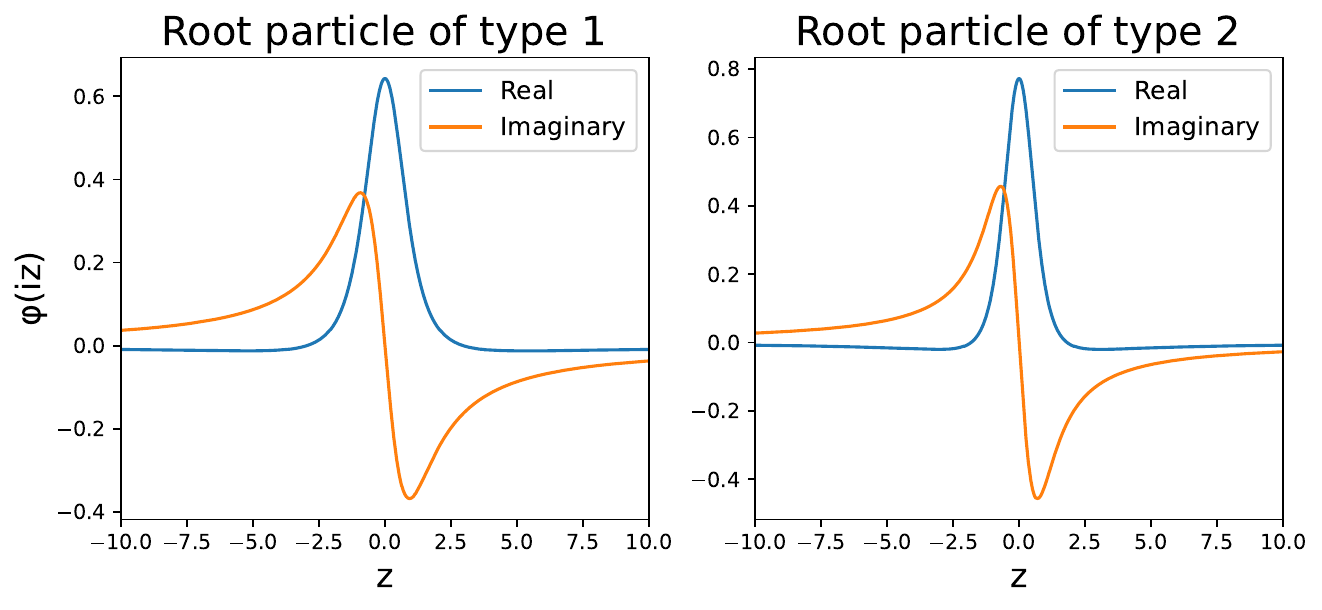}
    \caption{The characteristic function of $W^{(j)}, j \in [2],$ for the system in Example \ref{poisson}.}
    \label{fig:characteristic}
\end{figure}
\begin{figure}[t]
    \centering
    \includegraphics[width=0.7\linewidth]{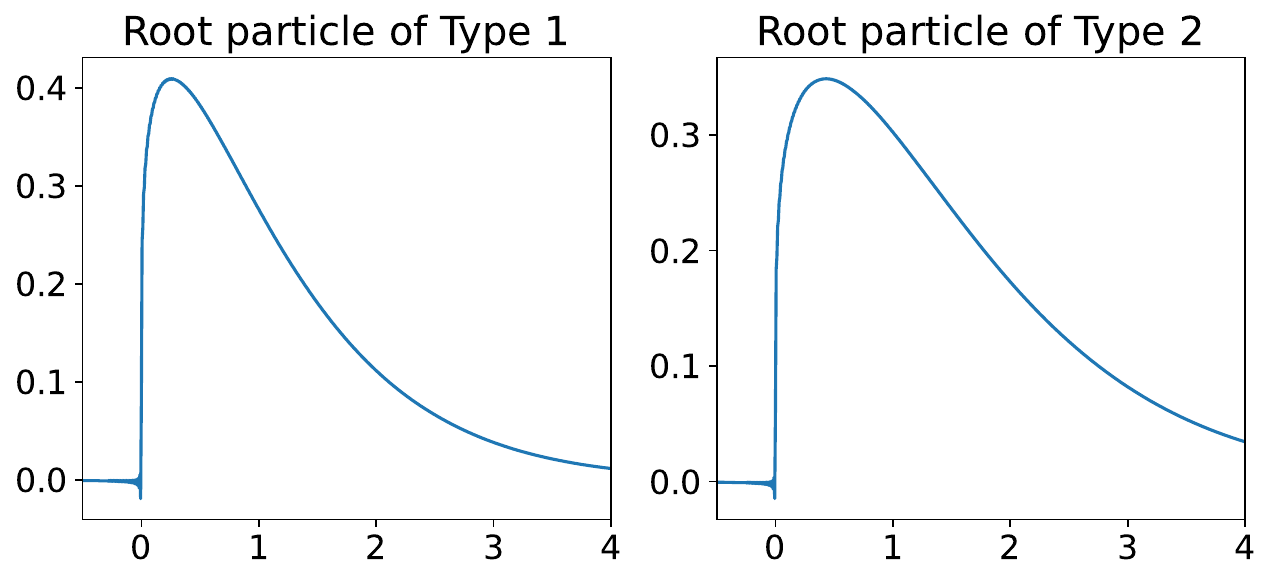}
    \caption{Estimated density of $W^{(j)}, j \in [2],$ for the system in Example \ref{poisson}.}
    \label{fig:density}
\end{figure}
%\end{example}
%\subsection{Comparison the density of $W$ by inversion and by simulation}
 We compare samples of $W^{(1)}$ from our approximation of its density with direct simulation of the branching process. With regard to the latter, we sample $\boldsymbol{Z}_T^1/\lambda^T$ for some large $T$, say $T=20$. For a larger number of generations, direct simulation becomes difficult---this issue will be described in more detail in Subsection \ref{computation}.
%\begin{example}
%    Consider again the system from Example \ref{poisson}.
The comparison of the inverted density of $W^{(1)}$ with a direct simulation of the normalised population is presented in Figure \ref{fig:histo}. As is clear from the Figure, there is good agreement between the two approaches, confirming that Algorithm \ref{alg:density} provides an accurate and approximately unbiased approximation to the density $w_j$. 
    % \begin{figure}
    %     \centering
    %     \includegraphics[width=0.9\linewidth]{images/comparison.pdf}
    %     \caption{Comparison of the inverted density and directly simulated density for 1000 samples. For 5000 samples they are virtually undistinguishable.\label{fig:comparison}
    % \end{figure}
    \begin{figure}[t]
      \centering
        \includegraphics[width=0.99\linewidth]{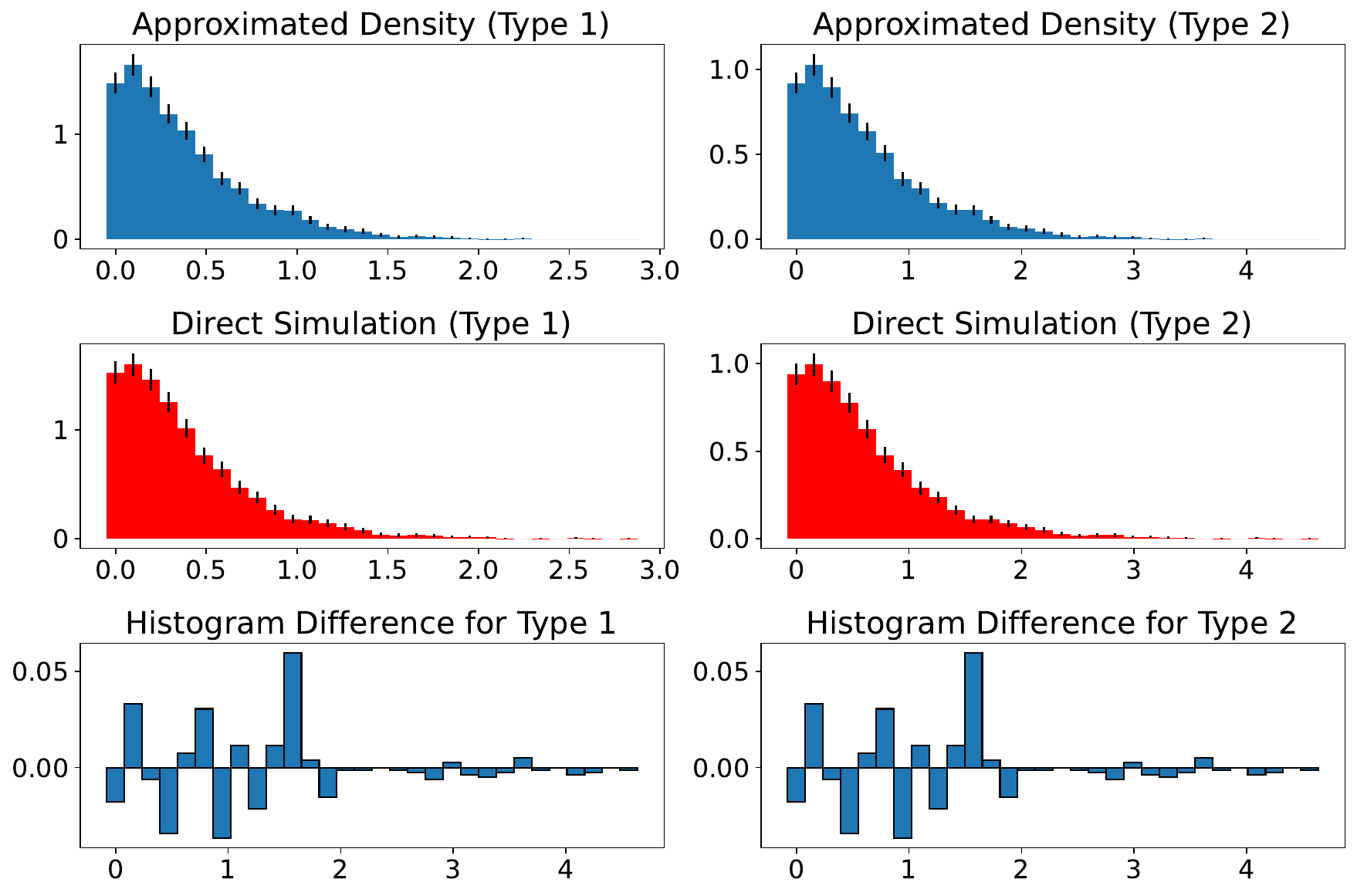}
        \caption{Histograms with their difference for 5000 samples from the approximated density and 5000 renormalised population sizes after 20 generations of a simulated branching process from Example \ref{poisson} started from a root ancestor of type 1. Only non-zero values are plotted. }\label{fig:histo}
    \end{figure}
\end{example}
\subsection{Estimation of Coalescence Probability}\label{composite_sampling}

To estimate coalescence probability using \eqref{eq:thm_finite}, we simulate the system until generation $T$ and them sample from $W^{(j)}$, $j \in [d]$, according to the types of individuals found in generation $T$, as described in more detail in Algorithm \ref{alg:probability}. The performance of this approach, which is based on Theorem \ref{exact_thm}, can be compared against a direct simulation of genealogy, which is described in more detail in Algorithm \ref{alg:direct}.

\begin{example}\label{supercritical}
We analyse the performance of our method on two systems---one nearly critical (Example \ref{poisson}) and one significantly supercritical (maximal eigenvalue of $\lambda = 5$) introduced below (Table \ref{tab:poisson2}). Results are shown in Figure \ref{fig:coalescence}. 

Based on Figure \ref{fig:coalescence}, the coalescence probability estimated using \eqref{eq:thm_finite} agrees closely with the estimate from direct simulation. Moreover, as can be expected from a theoretical perspective, our experiments visualise how the bounds become tighter as the system becomes more supercritical. This is particularly useful since highly supercritical systems are much more difficult to simulate due to their large population sizes.
  \begin{table}[h]
    \centering
    \begin{tabular}{ccc}
         & \bf Offspring of type 1 & \bf Offspring of type 2 \\ 
        \hline
        \bf Parent of type 1 & Poisson($4$) & Poisson($2$) \\
        \bf Parent of type 2 & Poisson($1$) &  Poisson($3$) 
    \end{tabular}
    \caption{\label{tab:poisson2}Offspring distributions for a significantly supercritical system.}
\end{table}
\begin{figure}[ht!]
    \centering
    % First image
    \begin{subfigure}{0.45\textwidth}
        \centering
        \includegraphics[width=\textwidth]{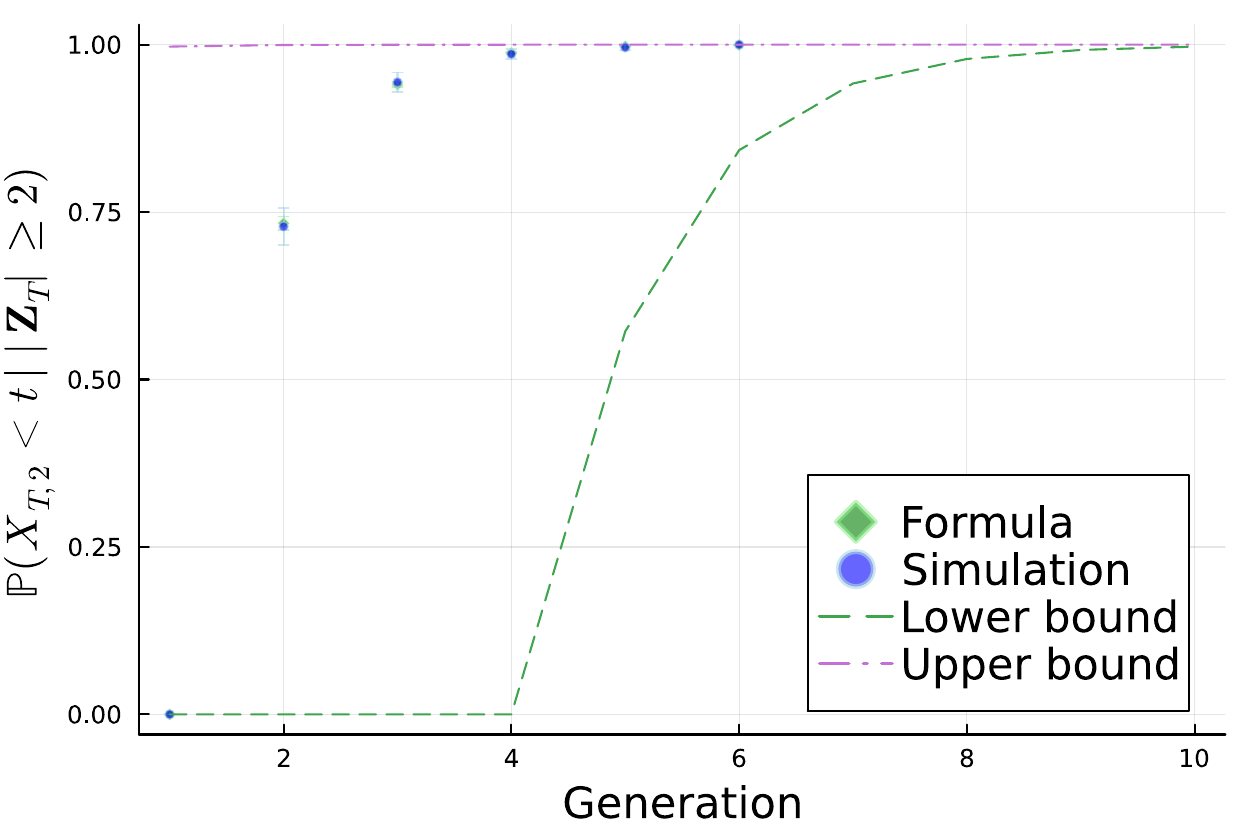}
        \caption{Significantly supercritical system from Example \ref{supercritical}.}
    \end{subfigure}
    \hfill
    % Second image
    \begin{subfigure}{0.45\textwidth}
        \centering
        \includegraphics[width=\textwidth]{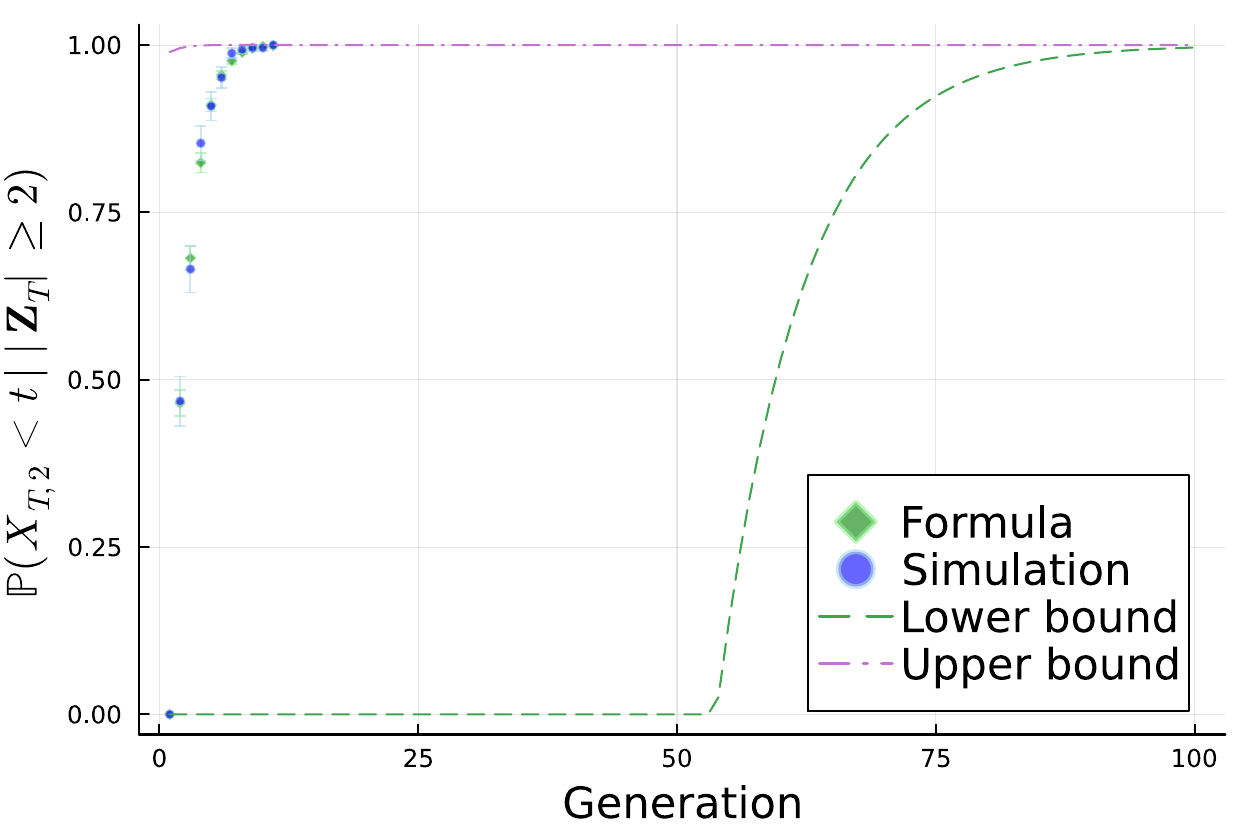}
        \caption{Slightly supercritical system from Example \ref{poisson}.}
    \end{subfigure}
    \caption{Coalescence probability estimation and bounds (see Theorems \ref{exact_thm} and \ref{hs_thm} respectively).}
    \label{fig:coalescence}
\end{figure}
\end{example}
\subsection{Computational cost} \label{computation}
Computing the coalescence probability using Theorem \ref{exact_thm} is much faster than keeping track of the genealogy of a branching process ran for a large number of generations. The actual length of computation is presented in Table~\ref{table_time}. We ran 1000 simulations for both the slightly supercritical system (Example \ref{poisson}) and the very supercritical system (Example \ref{supercritical}). Concerning the direct simulation of genealogy, for Example \ref{poisson} we considered the probability of coalescing in the first 10 generations, $t \in [1, 10]$, with the simulation finishing 10 generations later, $T = t + 10$. For Example \ref{supercritical}, we set $t \in [1, 5]$ and $T = t + 5$.

All the experiments have been executed in serial using Python 3.12.3 on a single core of Intel(R) Core(TM) Ultra 7 256V.

\begin{table}[ht!]
    \centering
    \caption{Computation time (in seconds) for coalescence probability estimation using Theorem \ref{exact_thm} (which consists of three sub-algorithms) and via direct simulation.}
    \label{table_time}
    \begin{tabular}{l r r} % column alignment: l=left, c=center, r=right
       &  \bf Slightly & \bf Very\\
      \bf Algorithm & \bf Supercritical & \bf Supercritical  \\
      \hline
      \bf Theorem \ref{exact_thm} estimation & & \\
        \quad Moments of W (Solving equation \eqref{eq:set_equations})  & 12   & 16   \\ 
        \quad Inverse Fourier transform (Algorithm  \ref{alg:density})  & 0.07   & 0.013   \\ 
        \quad Simulating the process until time $t$ (Algorithm \ref{alg:probability}) & 0.8   & 0.2   \\ 
        \bf Direct simulation (\ref{genealogy_simulation}) & 17,972 & 469 \\ % %4h59m32s & 7m49s
    \end{tabular}
\end{table}
\begin{remark}
    For the direct simulation of a very supercritical system, the process is evolved for fewer generations (10 for the slightly supercritical system versus 5 for the very supercritical system) due to the high memory usage needed for the larger population. For instance, for a population size of $10^9$, the vector storing their ancestors requires approximately $5$ GB of memory. For Example \ref{supercritical}, the population is expected to reach this size at around generation $12$, which makes simulating the process for a larger number of generations challenging. 
\end{remark}
\section{Discussion} \label{sec:discussion}
In Theorem \ref{exact_thm}, we derived a formula for the probability of all individuals from a sample of size $k$ from generation $T$ coalescing on the interval $[0, t)$ from a countably multi-type supercritical branching process as $T \rightarrow \infty$. This expression is given in terms of the limiting random variable describing the rescaled population size and the population size at time $t$, thereby extending the result of \cite{Hong2015} to countably infinite set of types and a process that can become extinct. In Theorem \ref{harmonic_thm} we show that the probability of interest can be upper- and lower-bounded in terms of the harmonic moments of the total population size $|\boldsymbol{Z}_t|$, and further in Theorem \ref{hs_thm} that those harmonic moments can in turn be bounded in terms of expectations of quantities relating to the Harris--Sevastyanov transformation of the original process at time $1$.  By combining these observations (Corollary \ref{corr:mainresult}), one obtains explicitly computable upper and lower bounds on the probability of interest which enable it to be approximated when $t$ becomes large enough to preclude direct simulation of $\boldsymbol{Z}_t$.

Our proof relies on Assumption \ref{moments} that $\boldsymbol{Z}_1^{i,j}$ possesses $2k$ moments---an assumption that holds for many distributions commonly used in applications. We anticipate that this condition can be relaxed with the use of more refined methods to bound the expected deviation of $\boldsymbol{Z}_T$ from its expected value at time $t$. Progress in this direction could be supported by the development of large-deviation results specific to multi-type branching processes.

In addition, one could analyse the genealogical process associated with $(\boldsymbol{Z}_t)$ in further detail to obtain a distribution on more than simply $X_{T,k}$. For example, for a critical multi-type branching process, \cite{osvaldo} studies among other things the distribution of the joint collection of all $k-1$ split times, the number of individuals that coalesce at any specific split time, as well as the type of the ancestor at each splitting event.

Our numerical results demonstrate that the bounds we obtain become tighter as the maximal eigenvalue increases. This is particularly useful because significantly supercritical processes are extremely challenging to simulate.

\section*{Acknowledgements}
AMJ acknowledges the financial support of the by United Kingdom Research and Innovation (UKRI) via grant number EP/Y014650/1, as part of the ERC Synergy project OCEAN. 

JK is supported by EPSRC grant: Project Reference 2733781.

\paragraph{Data access statement} No new data was generated or analysed in this study. Python and Julia code for reproducing the numerical results can be found at \url{https://github.com/mandarynka033/coalescence_supercritical}.

\appendix
\section{Technical Results}

\subsection{A modified conditional dominated convergence theorem}
\label{dominated}
The following proposition is used in line \eqref{eq:limit_expectation} of the proof of Theorem \ref{exact_thm}.
\begin{proposition}
% For any sequence of random variables $(\xi_i)_{i \in S}$ with $\xi_i \rightarrow \xi$ a.s. and a random variable $\eta$ with $\mathbb{E}(\eta) < \infty$ 
For any sequence of random variables $(\xi_T)_{T=1}^{\infty}$ satisfying $|\xi_T| < \eta$ for all $T \in \mathbb{N}_0$ for some random variable $\eta$ with $\mathbb{E}(\eta) < \infty$, and a supercritical, positively regular multi-type branching process $(\boldsymbol{Z}_t)_{t\in\mathbb{N}_0}$, it holds that
      \begin{equation}
        \lim_{T \rightarrow \infty} \mathbb{E}(\xi_{T} \bigm| |\boldsymbol{Z}_T| \ge k)  =  \mathbb{E}\left(\lim_{T \rightarrow \infty}\xi_{T} \bigm| |\boldsymbol{Z}_{\infty}| > 0\right).
    \end{equation}
\end{proposition}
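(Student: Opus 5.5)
We will prove the claim by writing each conditional expectation as a ratio of unconditional expectations,
\begin{equation*}
\mathbb{E}(\xi_T \bigm| |\boldsymbol{Z}_T| \ge k) = \frac{\mathbb{E}(\xi_T \mathbbm{1}\{|\boldsymbol{Z}_T|\ge k\})}{\mathbb{P}(|\boldsymbol{Z}_T|\ge k)},
\end{equation*}
and then treating the numerator and the denominator separately. The target is the ratio $\mathbb{E}(\xi \mathbbm{1}\{|\boldsymbol{Z}_\infty|>0\})/\mathbb{P}(|\boldsymbol{Z}_\infty|>0)$, where $\xi := \lim_T \xi_T$. We implicitly assume that this limit exists almost surely, as it does in the application through the continuous mapping theorem; if only convergence in probability is available, we pass to subsequences. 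We also need $\mathbb{P}(|\boldsymbol{Z}_\infty|>0) = 1-q_{i_0} > 0$, which holds by supercriticality and positive regularity.

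The first key step is to show that $\mathbbm{1}\{|\boldsymbol{Z}_T|\ge k\} \to \mathbbm{1}\{|\boldsymbol{Z}_\infty|>0\}$ almost surely. On the extinction event, $|\boldsymbol{Z}_T| = 0$ eventually, so the indicator is eventually $0$. On the survival event, we need $|\boldsymbol{Z}_T| \to \infty$ almost surely. This is the classical dichotomy for positively regular supercritical processes (Harris, Theorem II.6.1, or the analogue of the ``extinction or explosion'' theorem): apart from the trivial singular case, the process either dies out or tends to infinity. Alternatively, we can use Theorem~\ref{moy} together with $\{W^{(i_0)}>0\} = \{|\boldsymbol{Z}_\infty|>0\}$ up to null sets. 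I would cite the dichotomy directly.

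Given this, the numerator is handled by the ordinary dominated convergence theorem. Indeed, $\xi_T \mathbbm{1}\{|\boldsymbol{Z}_T|\ge k\} \to \xi \mathbbm{1}\{|\boldsymbol{Z}_\infty|>0\}$ almost surely, and the whole sequence is dominated by the integrable $\eta$. For the denominator, bounded convergence gives $\mathbb{P}(|\boldsymbol{Z}_T|\ge k) \to \mathbb{P}(|\boldsymbol{Z}_\infty|>0)$. Since the limit is strictly positive, the ratios converge to the ratio of the limits, which is exactly $\mathbb{E}(\lim_T \xi_T \bigm| |\boldsymbol{Z}_\infty|>0)$.

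The main obstacle is the almost-sure convergence of the indicators, that is, ruling out that a surviving population stays bounded along a subsequence. For countably many types, positive regularity alone may not make the classical finite-type argument go through, because the states are not finitely many transient ones. In that case I would instead use Theorem~\ref{moy}: $R^T|\boldsymbol{Z}_T| \to W^{(i_0)}$ in $L^2$, and hence almost surely along a subsequence. Together with the fact that $\mathbb{P}(W^{(i_0)}=0) = q_{i_0}$, this shows $|\boldsymbol{Z}_T| \to \infty$ on survival, at least in probability. Convergence in probability of the indicators is already enough for the dominated-convergence argument above, via the subsequence criterion.
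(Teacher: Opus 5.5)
Your argument is the paper's own: write the conditional expectation as a ratio, apply dominated convergence to $\mathbb{E}(\xi_T\mathbbm{1}\{|\boldsymbol{Z}_T|\ge k\})$, use $\mathbb{P}(|\boldsymbol{Z}_T|\ge k)\to 1-q_{i_0}>0$, and obtain $\mathbbm{1}\{|\boldsymbol{Z}_T|\ge k\}\to\mathbbm{1}\{|\boldsymbol{Z}_\infty|>0\}$ from the extinction-or-explosion dichotomy of \cite[Chapter 2.6]{harris1963}. One caveat: your fallback for countably many types needs $\mathbb{P}(W^{(i_0)}=0)=q_{i_0}$, which Theorem~\ref{moy} does not supply and which can fail for infinite type spaces (a surviving line can drift through ever-higher types at bounded size, giving $W^{(i_0)}=0$ without extinction), so the fallback does not close the countable-type gap you flag --- but the paper's appeal to Harris is also only a finite-type argument, so this gap is one the paper shares rather than a defect of your proposal relative to it.
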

\begin{proof}
For any $c \in \mathbb{N}_0$, denote $\{|\boldsymbol{Z}_{\infty}|>c\}:=\{\lim\inf_{T \rightarrow \infty}|\boldsymbol{Z}_{T}|>c\}$. We first show equivalence of the events $\{|\boldsymbol{Z}_T| \rightarrow \infty\}, \{|\boldsymbol{Z}_{\infty}| \ge k\}$, and $\{|\boldsymbol{Z}_{\infty}| > 0\}$ up to null sets. According to \cite[Chapter 2.6]{harris1963}, under the stated conditions on $(\boldsymbol{Z}_t)_{t\in\mathbb{N}_0}$ and for any non-zero $\boldsymbol{z} \in \mathbb{N}_0^{d}$:
\begin{equation*}
    \mathbb{P}(\boldsymbol{Z}_T = \boldsymbol{z} \text{ infinitely often}) = 0,
\end{equation*}
and hence $\mathbb{P}(|\boldsymbol{Z}_{\infty}| = 0)+\mathbb{P}(|\boldsymbol{Z}_T|\rightarrow \infty) = 1$. Furthermore, for any $j_1, j_2 \in \mathbb{N}_0$, the events $\{|\boldsymbol{Z}_{\infty}| > j_1\}$ and $\{|\boldsymbol{Z}_{\infty}| > j_2\}$ are equal almost surely, which can be proven by considering the probability of their symmetric difference:
\begin{align*}
    \mathbb{P}(\{|\boldsymbol{Z}_{\infty}| > j_1\} \setminus \{|\boldsymbol{Z}_{\infty}| > j_2\}) + \mathbb{P}(\{|\boldsymbol{Z}_{\infty}| > j_2\} \setminus \{|\boldsymbol{Z}_{\infty}| > j_1\}) &=\\ \mathbb{P}(j_1 <|\boldsymbol{Z}_{\infty}| \leq j_2) + \mathbb{P}(j_2 <|\boldsymbol{Z}_{\infty}| \leq j_1) &=0.
\end{align*}
The proof of the equivalence of $\{|\boldsymbol{Z}_T| \rightarrow \infty\}$ and $\{|\boldsymbol{Z}_{\infty}|> j\}$ for any $j \in \mathbb{N}_0$ is analogous.

Setting $j_1 = 0$ and $j_2 = k-1$, we get the equivalence of $\{|\boldsymbol{Z}_{\infty}| > 0\}$, $\{|\boldsymbol{Z}_{\infty}| \ge k\}$, and $\{|\boldsymbol{Z}_T| \rightarrow \infty\}$ up to null sets. %Hence, the expected values and probabilities conditioned on them are equal.
% \begin{equation}
    % \mathbb{E}(. \bigm| \{|\boldsymbol{Z}_{\infty}| > 0\}) = \mathbb{E}(. \bigm| \{|\boldsymbol{Z}_{\infty}| \ge k\}) \quad ; \quad \mathbb{P}(. \bigm| \{|\boldsymbol{Z}_{\infty}| > 0\}) = \mathbb{P}(. \bigm| \{|\boldsymbol{Z}_{\infty}| \ge k\}). \label{eq:equivalence}
% \end{equation}
Hence, for any fixed $k$,
\begin{equation*}
    0 < 1- q_{i_0} = \mathbb{P}(|\boldsymbol{Z}_{\infty}| > 0) = \mathbb{P}(|\boldsymbol{Z}_T| \rightarrow \infty) = \mathbb{P}(|\boldsymbol{Z}_{\infty}| \ge k).
\end{equation*}
Next, note that the quantity $\mathbb{E}(\xi_{T} \mathbbm{1}(|\boldsymbol{Z}_T| \ge k))$ is bounded:
\begin{equation*}
   -\infty < -\mathbb{E}(\eta) \le  \mathbb{E}(\xi_{T} \mathbbm{1}(|\boldsymbol{Z}_T| \ge k)) \le \mathbb{E}(\xi_{T} )) \le \mathbb{E}(\eta) < \infty.
\end{equation*}
Consequently, we can apply the standard dominated convergence theorem to show: %we can represent \ref{eq:limit_product} as a product of limits:
\begin{align}
        \lim_{T \rightarrow \infty} \mathbb{E}(\xi_{T} \bigm| |\boldsymbol{Z}_T| \ge k)
&= \lim_{T \rightarrow \infty} \frac{\mathbb{E}(\xi_{T} \mathbbm{1}(|\boldsymbol{Z}_T| \ge k)) }{\mathbb{P}(|\boldsymbol{Z}_T| \ge k)} \nonumber\\
    &= \lim_{T \rightarrow \infty} \frac{1}{\mathbb{P}(|\boldsymbol{Z}_T| \ge k)} \lim_{T \rightarrow \infty} \mathbb{E}(\xi_{T} \mathbbm{1}(|\boldsymbol{Z}_T| \ge k)) \nonumber\\
    & = \frac{1}{\lim_{T \rightarrow \infty}\mathbb{P}(|\boldsymbol{Z}_{T}| \ge k)}  \mathbb{E}\left(\lim_{T \rightarrow \infty}\xi_{T} \mathbbm{1}(|\boldsymbol{Z}_T| \ge k)\right) \nonumber\\ %\label{normal_dominated}\\
    &= \frac{\mathbb{E}(\lim_{T \rightarrow \infty}\xi_{T} \lim_{T \rightarrow \infty}\mathbbm{1}(|\boldsymbol{Z}_T| \ge k))}{\lim_{T \rightarrow \infty}\mathbb{P}(|\boldsymbol{Z}_{T}| \ge k)}.   \label{eq:indicator}
\end{align}
%Consider the limit of the indicator function in \eqref{eq:indicator}
%\begin{equation}
%    \lim_{T \rightarrow \infty} \mathbbm{1}(|\boldsymbol{Z}_T| \ge k) \label{eq:extended_mapping}.
%\end{equation}
The argument of the expectation in \eqref{eq:indicator} is continuous on $\mathbb{R}$ except at $k$. 

Since $\mathbb{P}(\lim_{T \rightarrow \infty}|\boldsymbol{Z}_T| = k) = 0$, we can apply the continuous mapping theorem (see for example \cite[Lemma 5.3, p.\ 103]{kallenberg}) %to \eqref{eq:indicator}
to obtain 
\begin{equation}
    \lim_{T \rightarrow \infty} \mathbbm{1}(|\boldsymbol{Z}_T| \ge k) = \mathbbm{1}\left(\lim_{T \rightarrow \infty} |\boldsymbol{Z}_T| \ge k\right)  = \mathbbm{1}( |\boldsymbol{Z}_{\infty}| \ge k). \label{eq:indicator_equivalence}
\end{equation}
Returning to \eqref{eq:indicator} and substituting \eqref{eq:indicator_equivalence}, we get
\begin{align*}
\frac{\mathbb{E}(\lim_{T \rightarrow \infty}\xi_{T} \lim_{T \rightarrow \infty}\mathbbm{1}(|\boldsymbol{Z}_T| \ge k))}{\lim_{T \rightarrow \infty}\mathbb{P}(|\boldsymbol{Z}_{T}| \ge k)} %\nonumber \\
    & = \mathbb{E}\left(\lim_{T \rightarrow \infty}\xi_{T}\bigm| |\boldsymbol{Z}_{\infty}| \ge k\right)
    = \mathbb{E}\left(\lim_{T \rightarrow \infty}\xi_{T}\bigm| |\boldsymbol{Z}_{\infty}| > 0\right) %\label{eq:equivalent}
\end{align*}
where the latter equality follows from the equivalence of $\{|\boldsymbol{Z}_{\infty}| \ge k\}$ and $\{|\boldsymbol{Z}_{\infty}| > 0\}$.
\end{proof}
\subsection{Upper bound on a quantity from equation \eqref{less_than1}}\label{less1}
% \adaml{Should we give this result a name?}
We will prove that the following inequality, which is used in \eqref{eq:close_to_mean} in the proof of Theorem \ref{harmonic_thm}, holds almost surely on the event $\{|\boldsymbol{Z}_t| > 0\}$:
        \begin{equation}
         % Other event
    \frac{\V{k}}
    % denominator
    {(|\boldsymbol{Z}_t|^{\frac{k-1}{k}}\V{1})^k} 
    %conditioning
    %\bigm| |\boldsymbol{Z}_{\infty}| > 0,\boldsymbol{Z}_t  \bigg)\bigm| |\boldsymbol{Z}_{\infty}| > 0\Bigg)
    \le 1,
        \end{equation}
    where $\V{j}$, $j\in\mathbb{N}_+$, is given by \eqref{eq:V}.
        \begin{proof}
          We have
    \begin{align}
    % \frac{\sum_{i\in S}\sum_{s=1}^{\z} \frac{(W_{t,s}^{(i)})^k}{|\boldsymbol{Z}_t|} }    {(|\boldsymbol{Z}_t|^{\frac{k-1}{k}}\sum_{i\in S}\sum_{s=1}^{\z}\frac{W_{t,s}^{(i)}}{|\boldsymbol{Z}_t|})^k}
    %&=  \frac{\sum_{i\in S}\sum_{s=1}^{\z} (W_{t,s}^{(i)})^k }{(\sum_{i\in S}\sum_{s=1}^{\z}W_{t,s}^{(i)})^k} \nonumber\\
    %&= \frac{\sum_{i \in S}\sum_{s=1}^{\z} (W_{t,s}^{(i)})^k }{\sum_{\substack{\{(k_{i,s}):\: k_{i,s} \geq 0,\\\sum k_{i,s} = k\}}} \prod_{i \in S} \prod_{s=1}^{\z}(W_{t,s}^{(i)})^{k_{i,s}}}\label{eq:multinomial}
    \frac{\V{k} }
    {(|\boldsymbol{Z}_t|^{\frac{k-1}{k}}\V{1})^k}
    &=  \frac{\sum_{i\in S}\sum_{s=1}^{\z} (W_{t,s}^{(i)})^k }
    {(\sum_{i\in S}\sum_{s=1}^{\z}W_{t,s}^{(i)})^k}
    = \frac{\sum_{i \in S}\sum_{s=1}^{\z} (W_{t,s}^{(i)})^k }
    {\sum_{\substack{\{(k_{i,s}):\: k_{i,s} \geq 0,\\\sum k_{i,s} = k\}}} \prod_{i \in S} \prod_{s=1}^{\z}(W_{t,s}^{(i)})^{k_{i,s}}}
    \label{eq:multinomial}
    \end{align}
    where \eqref{eq:multinomial} follows by expanding the $k$th power in the denominator using the multinomial theorem. Now observe that in \eqref{eq:multinomial} the numerator is a component of the denominator:
    \begin{align*}
      %&\mathbb{E}\Bigg(\mathbb{E}\bigg(
      \frac{\sum_{i \in S}\sum_{s=1}^{\z} (W_{t,s}^{(i)})^k }
    % denominator
    {\sum_{\substack{\{(k_{i,s}):\: k_{i,s} \geq 0,\\\sum k_{i,s} = k\}}} \prod_{i \in S} \prod_{s=1}^{\z}(W_{t,s}^{(i)})^{k_{i,s}}}
    %conditioning
    %\bigm| |\boldsymbol{Z}_{\infty}| > 0,\boldsymbol{Z}_t  %\bigg)\bigm| |\boldsymbol{Z}_{\infty}| > 0\Bigg)
    %\nonumber\\
    &= \frac{\sum_{i \in S}\sum_{s=1}^{\z} (W_{t,s}^{(i)})^k }
    % denominator
    {\sum_{i \in S}\sum_{s=1}^{\z} (W_{t,s}^{(i)})^k +\sum_{\substack{\{(k_{i,s}):\: k_{i,s} \geq 0,\\\sum k_{i,s} = k,\\ \max k_{i,s} \neq k\}}} \prod_{i \in S} \prod_{s=1}^{\z}(W_{t,s}^{(i)})^{k_{i,s}}}
    %conditioning
    %\bigm| |\boldsymbol{Z}_{\infty}| > 0,\boldsymbol{Z}_t  \bigg)\bigm| |\boldsymbol{Z}_{\infty}| > 0\Bigg)
    %\label{eq:1over1}
    \end{align*}
    which has the form  
    \begin{equation*}
        \frac{\sum_{i \in S}\sum_{s=1}^{\z} (W_{t,s}^{(i)})^k }{\sum_{i \in S}\sum_{s=1}^{\z} (W_{t,s}^{(i)})^k  + C} = \left(1 + \frac{C}{\sum_{i \in S}\sum_{s=1}^{\z} (W_{t,s}^{(i)})^k} \right)^{-1}
    \end{equation*}
    for some $C \ge 0$. As $W_{t,s}^{(i)} \geq 0$ this quantity is clearly bounded above by $1$.
\end{proof}
\section{Branching Processes Properties}
\subsection{Radius of Convergence for Example \ref{deterministic2}} \label{radius_convergence}

The following is an equivalent definition of the radius of convergence of $\boldsymbol{M}$ \cite[Corollary of Theorem 6.1]{seneta}: For an element-wise finite matrix $\boldsymbol{M} = (m_{i,j})_{i,j \in S}$ that is irreducible with a common finite period $d \ge 1$,
\begin{equation*}
    \lim_{n \rightarrow \infty} (m_{i,i}^{(nd)})^{1/nd} = R^{-1},
\end{equation*}
where $m_{i,i}^{(nd)}$ is the $i$th diagonal element of $\boldsymbol{M}^{nd}$.

For any stochastic matrix $\boldsymbol{T}=(t_{i,j})_{i,j \in S}$, the radius of convergence is $R=1$. In Example \ref{deterministic2}, we have $\boldsymbol{M}=2 \boldsymbol{T}$, for a stochastic matrix $\boldsymbol{T}$. Hence
\begin{align*}
    \lim_{n \rightarrow \infty} (m_{i,i}^{(nd)})^{1/nd} &= \lim_{n \rightarrow \infty} (2^{nd} t_{i,i}^{(nd)})^{1/nd} = 2\lim_{n \rightarrow \infty} (t_{i,i}^{(nd)})^{1/nd} = 2 R^{-1} = 2,
\end{align*}
so the radius of convergence of $\boldsymbol{M}$ is $1/2$.
\subsection{Proof of Lemma \ref{gamma_lemma}}\label{gamma}
%We are going to prove Lemma \ref{gamma_lemma}:
%\begin{equation}
% \mathbb{E}\left(\frac{1}{|\boldsymbol{Z}_t|^k} \bigm| |\boldsymbol{Z}_t| > 0\right) = \frac{1}{\Gamma(k)}\int_0^{\infty} u^{k-1} \frac{f_t^{i_0}(e^{-u}\boldsymbol{1})}{1-f_t^{i_0}(\boldsymbol{0})} du  \label{harmonic_moment_formula}.
%\end{equation}
%\begin{proof}
%By definition of the gamma function $\Gamma(k)$, for $k >0$:
%\begin{equation}
%\Gamma(k) = \int_0^{\infty} z^{k-1} e^{-z} dz.
%\end{equation}
Substitute $z=u|\boldsymbol{Z}_t|$ into the definition of the Gamma function $\Gamma(r)$ as an integral with respect to $z$ to obtain
\begin{align*}
   \Gamma(r) = \int_0^{\infty} z^{r-1} e^{-z} dz %\\
   & = \int_0^{\infty} (u|\boldsymbol{Z}_t|)^{r-1} e^{-u|\boldsymbol{Z}_t|} |\boldsymbol{Z}_t| du 
   =|\boldsymbol{Z}_t|^r \int_0^{\infty} u^{r-1} e^{-u|\boldsymbol{Z}_t|} du.
   \end{align*} 
   Rearranging, we get
   \begin{equation*}
       \frac{1}{|\boldsymbol{Z}_t|^r} = \frac{1}{\Gamma(r)} \int_0^{\infty} u^{r-1} e^{-u|\boldsymbol{Z}_t|} du,
   \end{equation*}
   from which we can consider the expected value
   \begin{equation}
        \mathbb{E}\left(\frac{1}{|\boldsymbol{Z}_t^{i_0}|^r} \bigm| |\boldsymbol{Z}_t| > 0\right)  = \mathbb{E}\left(\frac{1}{\Gamma(r)}\int_0^{\infty} u^{r-1} e^{-u|\boldsymbol{Z}_t|} du \bigm| |\boldsymbol{Z}_t| > 0 \right). \label{no_generating}
   \end{equation}
   Note that the generating function of the original process $|\boldsymbol{Z}_t^{i_0}|$ is related to the generating function of the same process conditioned on non-extinction via
\begin{equation}
    \mathbb{E}(s^{|\boldsymbol{Z}_t|} \bigm| |\boldsymbol{Z}_t| > 0) = \frac{f_t^{i_0}(s\boldsymbol{1})- f_t^{i_0}(\boldsymbol{0})}{1-f_t^{i_0}(\boldsymbol{0})}. \label{generating0}
\end{equation}
 Since the expected value and integration order can be exchanged due to Tonelli's theorem (e.g.\ \cite[Theorem 1.29, p.\ 25]{kallenberg}), we can substitute (\ref{generating0}) into (\ref{no_generating}) to arrive at equation \eqref{eq:harmonic_moment_formula}
%\begin{equation*}
%    \mathbb{E}\left(\frac{1}{|\boldsymbol{Z}_t|^k} \bigm| |\boldsymbol{Z}_t| > 0\right) = \frac{1}{\Gamma(k)}\int_0^{\infty} u^{k-1} \frac{f_t^{i_0}(e^{-u}\boldsymbol{1})- f_t^{i_0}(\boldsymbol{0})}{1-f_t^{i_0}(\boldsymbol{0})} du.
%\end{equation*}
as required. \qed
%\end{proof}

\subsection{Proof of Lemma \ref{lemma:iterates}}\label{iterates}
%We will prove Lemma \ref{lemma:iterates}:
%\begin{equation}
%    \boldsymbol{F}_t(\boldsymbol{s}) = (\boldsymbol{f}_t(\boldsymbol{s} \odot (\boldsymbol{1}-\boldsymbol{q})+\boldsymbol{q}) - \boldsymbol{q})\oslash(\boldsymbol{1}-\boldsymbol{q}).
%\end{equation}
%\begin{proof}
  Define $\boldsymbol{x} = \boldsymbol{s} \odot (\boldsymbol{1} - \boldsymbol{q}) + \boldsymbol{q}$.
  %  $x_i:=s_i(1-q_i)+q_i$ for all $i \in S$ and $\boldsymbol{x} := (x_i)_{i \in S}$.
  Now
\begin{align*}
  \boldsymbol{F}_t(\boldsymbol{s}) = \boldsymbol{F}_{t-1}(\boldsymbol{F}(\boldsymbol{s})) &= \boldsymbol{F}_{t-1}((\boldsymbol{f}(\boldsymbol{x}) - \boldsymbol{q}) \oslash (\boldsymbol{1} - \boldsymbol{q}))\\
  &= \boldsymbol{F}_{t-2}(\boldsymbol{F}((\boldsymbol{f}(\boldsymbol{x}) - \boldsymbol{q}) \oslash (\boldsymbol{1} - \boldsymbol{q})))\\
  &= \boldsymbol{F}_{t-2}(\boldsymbol{f}((\boldsymbol{f}(\boldsymbol{x}) - \boldsymbol{q}) \oslash (\boldsymbol{1} - \boldsymbol{q}) \odot(\boldsymbol{1} - \boldsymbol{q}) + \boldsymbol{q})-\boldsymbol{q}) \oslash (\boldsymbol{1} - \boldsymbol{q})\\
  &= \boldsymbol{F}_{t-1}(\boldsymbol{f}_2(\boldsymbol{x}) - \boldsymbol{q}) \oslash (\boldsymbol{1} - \boldsymbol{q}).
\end{align*}
Iterating this procedure, we get
\begin{equation*}
  \boldsymbol{F}_t(\boldsymbol{s}) = (\boldsymbol{f}_t(\boldsymbol{x}) - \boldsymbol{q}) \oslash (\boldsymbol{1} - \boldsymbol{q}).
\end{equation*}
\qed
%\end{proof}

\subsection{Proof of Lemma \ref{lemma:expected}}\label{expect_hs}
      Define $\boldsymbol{x} = \boldsymbol{s} \odot (\boldsymbol{1} - \boldsymbol{q}) + \boldsymbol{q}$, and note by the chain rule that
\begin{align*}
    \mathbb{E}(\boldsymbol{Y}^{i_0}_t) %\\
    = \nabla F^{i_0}_t (\boldsymbol{1}) 
    %&= \frac{\partial \left( \frac{f^{i_0}_t(\boldsymbol{s} \odot (\boldsymbol{1}-\boldsymbol{q})+\boldsymbol{q}) - q_{i_0}}{1 - q_{i_0}} \right)}{\partial s_i} \bigg|_{\boldsymbol{s} = \boldsymbol{1}} \\%\label{no_x}
%\end{align}
%  Proceeding from \ref{no_x}, we find
%\begin{align*}
%\frac{\partial \left( \frac{f^i_t(\boldsymbol{s} \odot (\boldsymbol{1}-\boldsymbol{q})+\boldsymbol{q}) - q_{i_0}}{1 - q_{i_0}} \right)}{\partial s_i} \bigg|_{\boldsymbol{s} = \boldsymbol{1}} %\\
    &=\frac{\boldsymbol{1}-\boldsymbol{q}}{1-q_{i_0}}\odot\nabla f^{i_0}_t (\boldsymbol{x}(\boldsymbol{1})) 
    =\frac{\boldsymbol{1}-\boldsymbol{q}}{1-q_{i_0}}\odot\nabla f^{i_0}_t(\boldsymbol{1})
    = \frac{\boldsymbol{1}-\boldsymbol{q}}{1-q_{i_0}}\odot\mathbb{E}(\boldsymbol{Z}_t^{i_0}).
\end{align*}\qed
%\end{proof}

\subsection{Proof of Lemma \ref{lemma_moments}}\label{sums_W}
%We will prove Lemma \ref{lemma_moments} --- that under Assumption \ref{infinite_matrix}, \ref{R-positive} and \ref{moments}, for $j \in [k]$ 
%      \begin{equation*}
%        \mathbb{E}\left(\sum_{i \in S}\sum_{s=1}^{\z}\frac{(W_{t, s}^{(i)})^j}{|\boldsymbol{Z}_t|}\bigm|  \boldsymbol{Z}_t, |\boldsymbol{Z}_t| > 0 \right) = \frac{\sum_{i \in S}\z \mathbb{E}((W^{(i)})^j)}{|\boldsymbol{Z}_t|} 
%    \end{equation*}
%    and 
%    \begin{equation*}
%        \Var\left(\sum_{i\in S}\sum_{s=1}^{\z}\frac{(W_{t, s}^{(i)})^j}{|\boldsymbol{Z}_t|}\bigm|  \boldsymbol{Z}_t, |\boldsymbol{Z}_t| > 0 \right) = \frac{\sum_{i \in S} \z \Var((W^{(i)})^j) }{|\boldsymbol{Z}_t|^2}
%    \end{equation*}
%    \begin{proof}
For $j \in \mathbb{N}_+$, note that %consider the expected value of the sum of $W$s divided by the population size
    \begin{align}
        \mathbb{E}\left(\sum_{i \in S}\sum_{s=1}^{\z}  \frac{W_{t, s}^{(j)}}{|\boldsymbol{Z}_t|}\bigm|  \boldsymbol{Z}_t, |\boldsymbol{Z}_t| > 0 \right) 
        &= \sum_{i \in S}\mathbb{E}\left(\sum_{s=1}^{\z}\frac{W_{t, s}^{(j)}}{|\boldsymbol{Z}_t|}\bigm|  \boldsymbol{Z}_t, |\boldsymbol{Z}_t| > 0\right) \label{eq:monotone_convergence} \\
        &= \frac{1}{\vert\boldsymbol{Z}_t\vert}\sum_{i \in S}\sum_{s=1}^{\z}\mathbb{E}(W_{t, s}^{(j)}\bigm|  \boldsymbol{Z}_t, |\boldsymbol{Z}_t| > 0), \label{eq:independent}
      \end{align}
      where \eqref{eq:monotone_convergence} follows by the monotone convergence theorem for a sum of non-negative random variables (refer to \cite[Thm.\ 8.1, p.\ 164]{kallenberg} for example).

      For any $i \in S$ and $s \in [\z]$, $(W_{t,s}^{(i)})^j$ is independent of $\boldsymbol{Z}_t$ so we have that
      \begin{equation}
      \mathbb{E}((W_{t,s}^{(i)})^j\bigm|  \boldsymbol{Z}_t, |\boldsymbol{Z}_t| > 0) = \mathbb{E}((W_{t,s}^{(i)})^j) \label{eq:drop_condition}.
      \end{equation}
      Returning to \eqref{eq:independent} and using \eqref{eq:drop_condition}, we get
      \begin{align*}
       \frac{1}{\vert \boldsymbol{Z}_t\vert}\sum_{i \in S}\sum_{s=1}^{\z}\mathbb{E}(W_{t, s}^{(j)}\bigm|  \boldsymbol{Z}_t, |\boldsymbol{Z}_t| > 0) 
        &= \frac{1}{\vert \boldsymbol{Z}_t\vert}\sum_{i \in S}\sum_{s=1}^{\z}\mathbb{E}(W_{t, s}^{(j)}) 
        =  \sum_{i \in S}\frac{\z\mathbb{E}( (W^{(i)})^j)}{\vert\boldsymbol{Z}_t\vert},
    \end{align*} 
verifying \eqref{eq:condEidentity}. The proof for the conditional variance identity \eqref{eq:condVidentity} is analogous. \qed
%\end{proof}

%% If you have bib database file and want bibtex to generate the
%% bibitems, please use
%%
%%  \bibliographystyle{elsarticle-num} 
%%  \bibliography{<your bibdatabase>}

\subsection{Proof of Proposition \ref{prop:W_vs_Z1}}\label{W_vs_Z1}

The proof relies on the upper bound from Rosenthal's inequality, which is stated below. 
\begin{theorem}[Theorem 2.12 in \cite{hall}]
Let $(S_t: t=1,\dots,T)$ be a $(\mathcal{F}_t)$-martingale and define $D_1 = S_1$ and $D_{t} = S_t - S_{t-1}$. Then for $2 \le p < \infty$, there exists a constant $C$ dependent only on $p$, such that 
    \begin{equation}
        \mathbb{E}(\lvert S_T\rvert^p)
\le C \left\{
\mathbb{E}\left[\left(\sum_{t=1}^T \mathbb{E}\!\left(D_t^2 \mid \mathcal{F}_{t-1}\right)\right)^{p/2}\right]
+ \sum_{t=1}^T \mathbb{E} (\lvert D_t\rvert^p)
\right\}. \label{eq:martingale_rosenthal}
    \end{equation}
\end{theorem}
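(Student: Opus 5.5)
The statement is the classical Rosenthal inequality for martingales, quoted from \cite{hall}. If I were to prove it rather than cite it, I would go through the square function. Throughout, $C$ denotes a constant depending only on $p$, possibly changing from line to line.

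\textbf{Step 1 (reduction to the square function).} By Burkholder's square function inequality, for $1<p<\infty$,
\begin{equation*}
\mathbb{E}(|S_T|^p)\le C\,\mathbb{E}\Big[\Big(\sum_{t=1}^T D_t^2\Big)^{p/2}\Big].
\end{equation*}
With $q:=p/2\ge 1$ and $X_t:=D_t^2\ge 0$ adapted, it then suffices to prove the following inequality for non-negative adapted sequences:
\begin{equation*}
\mathbb{E}\Big[\Big(\sum_{t=1}^T X_t\Big)^{q}\Big]\le C\Big\{\mathbb{E}\Big[\Big(\sum_{t=1}^T \mathbb{E}(X_t\mid\mathcal F_{t-1})\Big)^{q}\Big]+\sum_{t=1}^T\mathbb{E}(X_t^{q})\Big\}.
\end{equation*}
The last term is exactly $\sum_t\mathbb{E}|D_t|^p$.

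\textbf{Step 2 (the case $1\le q\le 2$, i.e.\ $2\le p\le 4$).} Split $\sum_t X_t=A_T+M_T$, where $A_T:=\sum_t\mathbb{E}(X_t\mid\mathcal F_{t-1})$ is the predictable compensator and $M_T:=\sum_t (X_t-\mathbb{E}(X_t\mid\mathcal F_{t-1}))$ is a martingale. By convexity, $(a+b)^q\le 2^{q-1}(a^q+b^q)$, so it remains to control $\mathbb{E}|M_T|^q$.

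For $1<q\le 2$, I would apply Burkholder once more to get $\mathbb{E}|M_T|^q\le C\,\mathbb{E}[(\sum_t d_t^2)^{q/2}]$ with $d_t:=X_t-\mathbb{E}(X_t\mid\mathcal F_{t-1})$. Since $q/2\le 1$, subadditivity of $x\mapsto x^{q/2}$ gives $(\sum_t d_t^2)^{q/2}\le \sum_t|d_t|^q$. Then conditional Jensen gives $\mathbb{E}|d_t|^q\le 2^q\,\mathbb{E}(X_t^q)$, which is of the required form. The boundary case $q=1$ needs no work, because $\mathbb{E}\sum_t X_t=\mathbb{E}A_T$.

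\textbf{Step 3 (the case $q>2$, by induction).} Here $M_T$ is again a martingale, now with exponent $q\ge 2$. I would apply the inequality being proved, inductively, to $M_T$ at exponent $q=p/2$, halving the exponent each time until it falls into $[2,4]$, which is Step 2. This produces two terms.
\begin{itemize}
\item The first is $\mathbb{E}[(\sum_t\mathbb{E}(d_t^2\mid\mathcal F_{t-1}))^{q/2}]$. Using $\mathbb{E}(d_t^2\mid\mathcal F_{t-1})\le\mathbb{E}(X_t^2\mid\mathcal F_{t-1})$, this is bounded by $\mathbb{E}[(\sum_t\mathbb{E}(D_t^4\mid\mathcal F_{t-1}))^{p/4}]$.
\item The second is $\sum_t\mathbb{E}|d_t|^q$, which is at most $C\sum_t\mathbb{E}|D_t|^p$ as in Step 2.
\end{itemize}

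\textbf{Main obstacle.} The difficulty is absorbing the mixed predictable fourth-moment term from Step 3 into the two terms on the right-hand side of the statement. I would first try a Hölder/interpolation bound. Applied to $\mathbb{E}(D_t^4\mid\mathcal F_{t-1})$, with exponents chosen between the second and $p$-th conditional moments, it should yield
\begin{equation*}
\sum_t\mathbb{E}(D_t^4\mid\mathcal F_{t-1})\le \Big(\max_t \mathbb{E}(D_t^2\mid\mathcal F_{t-1})\Big)^{\theta}\cdots .
\end{equation*}
I would then combine this with Young's inequality to split the bound into $\epsilon\,\mathbb{E}[(\sum_t D_t^2)^{p/2}]+C_\epsilon\{\cdots\}$, and absorb the $\epsilon$-term into the left-hand side of Step 1.

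If that bookkeeping does not close, the fallback is Burkholder's good-$\lambda$ inequality. For non-negative adapted $X_t$ it gives
\begin{equation*}
\mathbb{E}\Big[\Big(\sum_t X_t\Big)^q\Big]\le C\Big(\mathbb{E}[A_T^q]+\mathbb{E}\big[\max_t X_t^q\big]\Big)
\end{equation*}
for all $q\ge 1$ at once, and it finishes immediately via $\max_t X_t^q\le\sum_t X_t^q$.
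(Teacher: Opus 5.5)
The paper gives no proof of this statement; it is quoted from Hall and Heyde. There, the upper bound is read off from their Theorem 2.11, a Burkholder-type good-$\lambda$ estimate bounding $\mathbb{E}\max_{t\le T}|S_t|^p$ by $\mathbb{E}[(\sum_t\mathbb{E}(D_t^2\mid\mathcal{F}_{t-1}))^{p/2}]+\mathbb{E}\max_t|D_t|^p$, followed by $\mathbb{E}\max_t|D_t|^p\le\sum_t\mathbb{E}|D_t|^p$.

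Your fallback (Burkholder's square-function inequality, then the good-$\lambda$ inequality for the non-negative adapted sequence $D_t^2$) is the same mechanism in two stages. It is correct and complete on its own.

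Your main route is genuinely different. Reducing to $X_t=D_t^2$ and inducting on dyadic ranges of the exponent needs only Burkholder's inequality plus H\"older and Young, with no stopping-time arguments. The cost is that constants compound along the induction, and you get only the $\sum_t\mathbb{E}|D_t|^p$ form, which is all the statement asks for. The good-$\lambda$ route handles every $p$ at once, gives the sharper $\mathbb{E}\max_t|D_t|^p$ term, and controls $\max_t|S_t|$ as well.

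The ``main obstacle'' in Step 3 is not a real one, and you need neither $\max_t$ nor any absorption. For $p>4$ put $\theta=(p-4)/(p-2)\in(0,1)$, so that $4=2\theta+p(1-\theta)$. Let $V:=\sum_t\mathbb{E}(D_t^2\mid\mathcal{F}_{t-1})$ and $U:=\sum_t\mathbb{E}(|D_t|^p\mid\mathcal{F}_{t-1})$. Conditional H\"older for each $t$, followed by H\"older over $t$, gives
\[
\sum_{t=1}^T\mathbb{E}(D_t^4\mid\mathcal{F}_{t-1})\le\sum_{t=1}^T\mathbb{E}(D_t^2\mid\mathcal{F}_{t-1})^{\theta}\,\mathbb{E}(|D_t|^p\mid\mathcal{F}_{t-1})^{1-\theta}\le V^{\theta}U^{1-\theta}.
\]
Raise this to the power $p/4$ and apply Young's inequality with conjugate exponents $2/\theta$ and $4/((1-\theta)p)$; note $\theta/2+(1-\theta)p/4=1$. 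This gives $(V^{\theta}U^{1-\theta})^{p/4}\le V^{p/2}+U$ pointwise. Hence
\[
\mathbb{E}\Bigl[\Bigl(\sum_t\mathbb{E}(D_t^4\mid\mathcal{F}_{t-1})\Bigr)^{p/4}\Bigr]\le\mathbb{E}[V^{p/2}]+\sum_t\mathbb{E}|D_t|^p,
\]
which is exactly the right-hand side of the statement. With this, Steps 1--3 form a complete proof; Steps 1 and 2 are fine as written.

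Pulling out $\max_t\mathbb{E}(D_t^2\mid\mathcal{F}_{t-1})$ as in your sketch would not work. It leaves $\sum_t\mathbb{E}(|D_t|^p\mid\mathcal{F}_{t-1})^{1-\theta}$, which is only bounded by $T^{\theta}U^{1-\theta}$, so that version loses a $T$-dependent factor.
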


\begin{remark}
An analogous statement is true for a sequence of independent random variables $(\xi_i)_{i=1}^T$ and a sub-$\sigma$-algebra $\mathcal{G}$ where for any $i \in [T]$, it holds that $\mathbb{E}(\xi_i \mid \mathcal{G}) = 0$ and $\mathbb{E}(|\xi_i|^p \mid \mathcal{G}) < \infty$, $p \ge 2$ \cite{rosenthal}. Let $M_T = \sum_{i=1}^T \xi_i$, then there exists a positive, finite constant $C$  dependent only on $p$ such that
\begin{equation}
    \mathbb{E}(|M_T|^p\mid \mathcal{G}) \le C \left(  \left(\sum_{i=1}^T \mathbb{E}(\xi_i^2 \mid \mathcal{G})\right)^{p/2}  + \sum_{i=1}^T \mathbb{E}(|\xi_i|^p \mid \mathcal{G})\right). \label{eq:iid_rosenthal}
\end{equation}

\end{remark}

For $T \in \mathbb{N}_+$, consider $W_T^{(i_0)} = \boldsymbol{u} \cdot \boldsymbol{Z}_T^{i_0}/\lambda^T$, a non-negative $\mathcal{F}_T$-martingale, where $\mathcal{F}_T$ denotes the $\sigma$-algebra generated by $\{Z_t\}_{t=1}^T$. The limit $\lim_{T \rightarrow \infty}{W_T^{(i_0)}} = W^{(i_0)}$ exists \cite[Theorem 4, Chapter V]{athreya_ney}. Define $D_{T+1}^{(i_0)} := W_{T+1}^{(i_0)} - W_T^{(i_0)}$.

\begin{lemma}\label{lemma:E_D}
For $p \in [2k]$ and $d < \infty$, there exist a finite, positive constant $a_1$ only dependent on $p$ such that
    \begin{equation} \label{eq:only_D}
         \mathbb{E}(|D_{T+1}^{(i_0)}|^p\mid \mathcal{F}_T) \le a_1  \lambda^{-p(T+1)} |\boldsymbol{u}|^p\left(|\boldsymbol{Z}_T^{i_0}|  + |\boldsymbol{Z}_T^{i_0}|^{p/2}\right).
    \end{equation}
\end{lemma}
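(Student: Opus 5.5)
We write the martingale increment as a sum of independent centred contributions from the individuals alive at generation $T$, and then apply the conditional Rosenthal inequality \eqref{eq:iid_rosenthal} with $\mathcal{G}=\mathcal{F}_T$. Since $R\boldsymbol{M}\boldsymbol{u}=\boldsymbol{u}$ with $R=\lambda^{-1}$ in the finite case, we have $\boldsymbol{M}\boldsymbol{u}=\lambda\boldsymbol{u}$. Using the decomposition $\boldsymbol{Z}_{T+1}=\sum_{i}\sum_{s=1}^{Z_T^{i_0,i}}\boldsymbol{\xi}^i_{T,s}$ we get
\begin{equation*}
D_{T+1}^{(i_0)} = \lambda^{-(T+1)}\sum_{i=1}^d\sum_{s=1}^{Z_T^{i_0,i}} \left(\boldsymbol{u}\cdot\boldsymbol{\xi}^i_{T,s} - \lambda u_i\right) =: \lambda^{-(T+1)}\sum_{i=1}^d\sum_{s=1}^{Z_T^{i_0,i}} \eta^i_{s}.
\end{equation*}
Conditionally on $\mathcal{F}_T$, the $\eta^i_s$ are independent, each has the law of $\boldsymbol{u}\cdot\boldsymbol{Z}_1^i-\lambda u_i$, and each has mean zero because $\mathbb{E}(\boldsymbol{u}\cdot\boldsymbol{Z}_1^i)=(\boldsymbol{M}\boldsymbol{u})_i=\lambda u_i$. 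The number of summands is $|\boldsymbol{Z}_T^{i_0}|$, which is $\mathcal{F}_T$-measurable. Pulling out $\lambda^{-p(T+1)}$, the conditional Rosenthal inequality gives
\begin{equation*}
\mathbb{E}(|D_{T+1}^{(i_0)}|^p\mid\mathcal{F}_T)\le C\lambda^{-p(T+1)}\Big(\big(\textstyle\sum_{i,s}\mathbb{E}((\eta^i_s)^2)\big)^{p/2}+\sum_{i,s}\mathbb{E}(|\eta^i_s|^p)\Big).
\end{equation*}

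The next step bounds the single-individual moments uniformly in the type. Since $d<\infty$, we have $\boldsymbol{u}\cdot\boldsymbol{Z}_1^i\le |\boldsymbol{u}|\,|\boldsymbol{Z}_1^i|$, where we use $\max_j u_j\le|\boldsymbol{u}|$ for the 1-norm. We also have $\lambda u_i = (\boldsymbol{M}\boldsymbol{u})_i\le |\boldsymbol{u}|\,\mathbb{E}|\boldsymbol{Z}_1^i|$. Applying $|a-b|^p\le 2^{p-1}(|a|^p+|b|^p)$ and then Jensen's inequality $(\mathbb{E}|\boldsymbol{Z}_1^i|)^p\le \mathbb{E}|\boldsymbol{Z}_1^i|^p$ gives
\begin{equation*}
\mathbb{E}|\eta^i_s|^p\le 2^p|\boldsymbol{u}|^p\,\mathbb{E}|\boldsymbol{Z}_1^i|^p \le 2^p|\boldsymbol{u}|^p \max_{i\in[d]}\mathbb{E}|\boldsymbol{Z}_1^i|^p =: |\boldsymbol{u}|^p K_p.
\end{equation*}
The constant $K_p$ is finite for $p\le 2k$. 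To see this, write $|\boldsymbol{Z}_1^i|^p=(\sum_j Z_1^{i,j})^p\le d^{p-1}\sum_j (Z_1^{i,j})^p$, and use \eqref{eq:finite_moments} together with Lyapunov's inequality to handle $p<2k$. The same bound with $p=2$ gives $\mathbb{E}(\eta_s^i)^2\le |\boldsymbol{u}|^2K_2$.

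Substituting these bounds, the first Rosenthal term becomes $(|\boldsymbol{Z}_T|\,|\boldsymbol{u}|^2K_2)^{p/2}=|\boldsymbol{u}|^pK_2^{p/2}|\boldsymbol{Z}_T|^{p/2}$. The second term becomes $|\boldsymbol{Z}_T|\,|\boldsymbol{u}|^pK_p$. Setting $a_1:=C\max(K_2^{p/2},K_p)$ yields \eqref{eq:only_D}. On the event $\{|\boldsymbol{Z}_T|=0\}$ both sides vanish, so that case is trivial.

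The delicate point is the claim that $a_1$ "depends only on $p$". In this argument $a_1$ also depends on the fixed offspring law through $K_p$, and hence on $d$. I would read the statement as saying that $a_1$ does not depend on $T$ or on the configuration $\boldsymbol{Z}_T$, which is all that is needed downstream. The remaining work is to verify that the version of the conditional Rosenthal inequality \eqref{eq:iid_rosenthal} applies with a random, $\mathcal{G}$-measurable number of summands. I would do this by conditioning on $\boldsymbol{Z}_T=\boldsymbol{z}$, applying the inequality for a deterministic number $|\boldsymbol{z}|$ of independent variables, and then reassembling.
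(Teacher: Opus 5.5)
Your proposal is correct and follows essentially the same route as the paper. You write $D_{T+1}^{(i_0)}$ as $\lambda^{-(T+1)}$ times a sum of $|\boldsymbol{Z}_T^{i_0}|$ conditionally independent centred terms $\boldsymbol{u}\cdot\boldsymbol{Z}^i_{T,1,s}-\lambda u_i$, bound their $p$th and second moments uniformly in type using the finite offspring moments, and apply the conditional Rosenthal inequality. The paper bounds those moments with the $c_r$ inequality plus H\"older, where you use $\boldsymbol{u}\cdot\boldsymbol{Z}_1^i\le|\boldsymbol{u}|\,|\boldsymbol{Z}_1^i|$ and convexity, which is an inessential difference. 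Your reading of the dependence of $a_1$ also matches the paper's own choice $a_1=2^pc_p\max\{C_1,C_1^{p/2}\}$, which likewise involves the offspring law.

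The only loose end, which the paper shares, is that Rosenthal needs $p\ge 2$. The case $p=1\in[2k]$ should be handled separately by the conditional triangle inequality, which gives $\mathbb{E}(|D_{T+1}^{(i_0)}|\mid\mathcal{F}_T)\le \lambda^{-(T+1)}|\boldsymbol{u}|K_1|\boldsymbol{Z}_T^{i_0}|$ directly.
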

\begin{proof}
Observe that, using \eqref{eq:Zdecomposition}, $W_{T+1}^{(i_0)}$ can be expressed as %a sum of families from individuals at generation $T$
\begin{align}
    W_{T+1}^{(i_0)} = \frac{\boldsymbol{u} \cdot \boldsymbol{Z}_{T+1}^{i_0}}{\lambda^{T+1}} = \frac{\boldsymbol{u} \cdot \sum_{i=1}^d \sum_{s=1}^{Z_T^{i_0, i}} \boldsymbol{Z}_{T,1,s}^{i}}{\lambda^{T+1}} \label{W_k1}.
\end{align}
Using \eqref{W_k1}, let us express $D_{T+1}$ in the following way:
\begin{align*}
    D_{T+1}^{(i_0)} &= \frac{\boldsymbol{u} \cdot \sum_{i=1}^d \sum_{s=1}^{Z_T^{i_0, i}} \boldsymbol{Z}_{T,1,s}^{i}}{\lambda^{T+1}} - \frac{\boldsymbol{u} \cdot \boldsymbol{Z}_T^{i_0}}{\lambda^T}\\
    &= \frac{\sum_{i=1}^d \sum_{s=1}^{Z_T^{i_0, i}} \boldsymbol{u} \cdot \boldsymbol{Z}_{T,1,s}^{i}}{\lambda^{T+1}} - \frac{\boldsymbol{u}  \cdot \sum_{i=1}^d \sum_{s=1}^{Z_T^{i_0, i}}\boldsymbol{e}_{i}}{\lambda^t} 
    = \frac{\sum_{i=1}^d \sum_{s=1}^{Z_T^{i_0, i}} (\boldsymbol{u} \cdot \boldsymbol{Z}_{T,1,s}^{i} - \lambda u_{i})}{\lambda^{T+1}}. 
\end{align*}

Note that for any index $s$ and any $i \in S$, $\mathbb{E}( \boldsymbol{u} \cdot \boldsymbol{Z}_{T,1,s}^{i}) = \lambda u_{i}$, which can be proven by first evaluating \eqref{eq:set_equations} at $n=1$ (the first derivative) to get
\begin{equation}
    \lambda u_{i} = \sum_{j=1}^d u_j \mathbb{E}(Z_1^{i, j}) \label{eq:derivative1}
\end{equation}
and then substituting \eqref{eq:derivative1} in the calculation
\begin{equation}
     \mathbb{E}(\boldsymbol{u} \cdot \boldsymbol{Z}_{T,1,s}^i) = \boldsymbol{u} \cdot \mathbb{E}(\boldsymbol{Z}_1^i) = \boldsymbol{u} \cdot (\boldsymbol{e}_{i}^{\top} \boldsymbol{M}) = \sum_{j=1}^d u_j \mathbb{E}(Z_1^{i,j}) = \lambda u_i \label{eq:expectation_minus}.
\end{equation}
We will now use this result to bound %the expected value of $|D_{T+1}^{(i_0)}|^p$ with respect to filtration $\mathcal{F}_{T}$
\begin{align}
    \mathbb{E}(|D_{T+1}^{(i_0)}|^p\mid\mathcal{F}_{T}) %&= \mathbb{E}\left(\left|\frac{\sum_{i=1}^d \sum_{s=1}^{Z_{T}^{i_0, i}} (\boldsymbol{u} \cdot \boldsymbol{Z}_{T,1,s}^{i} - \lambda u_{i})}{\lambda^{T+1}}\right|^p \mid \mathcal{F}_{T}\right) \label{eq:conditioned}\\
    &=  \lambda^{-p(T+1)}\mathbb{E}\left(\left|\sum_{i=1}^d \sum_{s=1}^{Z_T^{i_0, i}} (\boldsymbol{u} \cdot \boldsymbol{Z}_{T,1,s}^{i} - \lambda u_{i})\right|^p\mid \mathcal{F}_{T}\right)\nonumber \\
    &=  \lambda^{-p(T+1)}\mathbb{E}\left(\left|\sum_{i=1}^d \sum_{s=1}^{Z_T^{i_0, i}} (\boldsymbol{u} \cdot \boldsymbol{Z}_{T,1,s}^{i} - \mathbb{E}(\boldsymbol{u} \cdot \boldsymbol{Z}_{T,1,s}^{i}))\right|^p\mid \mathcal{F}_{T}\right). \label{eq:before_rosenthal}
\end{align}

In order to apply inequality \eqref{eq:iid_rosenthal} to \eqref{eq:before_rosenthal} we need to prove that for any $i \in [d]$ and $s \in [Z_T^{i_0,i}]$, $\mathbb{E}(|\boldsymbol{u} \cdot \boldsymbol{Z}_{T,1,s}^{i} - \lambda u_{i}|^p\mid\mathcal{F}_{T}) < \infty$. To do so, we will use the $c_r$ inequality \cite[p.~157]{loeve1977}
\begin{align}
    \mathbb{E}(|\boldsymbol{u} \cdot \boldsymbol{Z}_{T,1,s}^{i} - \mathbb{E}(\boldsymbol{u} \cdot \boldsymbol{Z}_{T,1,s}^{i})|^p\mid\mathcal{F}_{T}) &\le 2^p \mathbb{E}((\boldsymbol{u} \cdot \boldsymbol{Z}_{T,1,s}^{i})^p\mid\mathcal{F}_{T}) \label{eq:binomial} \\
    &= 2^p\mathbb{E}((\boldsymbol{u} \cdot \boldsymbol{Z}_1^{i} )^p) \nonumber \\
    &= 2^p\sum_{j_1=1}^d \cdots \sum_{j_p=1}^d \left(\prod_{s=1}^p u_{j_s}\right) \mathbb{E}\left(\prod_{s=1}^p  Z_1^{i, j_s}\right) \nonumber \\
    &\le 2^p\sum_{j_1=1}^d \cdots \sum_{j_p=1}^d \left(\prod_{s=1}^p u_{j_s}\right) \prod_{s=1}^p  \left(\mathbb{E}\left(\left(Z_1^{i, j_s}\right)^p\right)\right)^{1/p}, \label{eq:holder}
    \end{align}
    where \eqref{eq:holder} follows from H\"{o}lder's inequality.
    
    If $d < \infty$, under condition \eqref{eq:finite_moments} we can say
    that there exists a positive, finite $C_1$ such that for any $i,j \in S$ and for any $p \in [2k]$
    \begin{equation}
        \mathbb{E}((Z_1^{i,j})^p) < C_1. \label{eq:offspring_bound} 
    \end{equation}
Using \eqref{eq:offspring_bound} in \eqref{eq:holder}, we get a finite bound %for $\mathbb{E}(|\boldsymbol{u} \cdot \boldsymbol{Z}_{T,1,s}^{i} - \mathbb{E}(\boldsymbol{u} \cdot \boldsymbol{Z}_{T,1,s}^{i})|^p)$ for any $i \in [d]$ and $s \in [Z_T^{i_0,i}]$
    \begin{align}
     \mathbb{E}(|\boldsymbol{u} \cdot \boldsymbol{Z}_{T,1,s}^{i} - \mathbb{E}(\boldsymbol{u} \cdot \boldsymbol{Z}_{T,1,s}^{i})|^p) \leq 2^p\mathbb{E}((\boldsymbol{u} \cdot \boldsymbol{Z}_1^{i} )^p) < C_1(2 |\boldsymbol{u}|)^p. \label{eq:finite_p}
\end{align}
Now, applying inequality \eqref{eq:iid_rosenthal} to \eqref{eq:before_rosenthal},

  % \mathbb{E}(\boldsymbol{u} \cdot \boldsymbol{Z}_1^{i})^2 < C_1 ||\boldsymbol{u}||^2 \quad \text{ and }
\begin{align}
\mathbb{E}(|D_{T+1}^{(i_0)}|^p\mid\mathcal{F}_{T}) \nonumber\\
 %\lambda^{-p(T+1)}\mathbb{E}\bigg(| & \sum_{i=1}^d \sum_{s=1}^{Z_T^{i_0, i}} \boldsymbol{u} \cdot \boldsymbol{Z}_{T,1,s}^{i} - \lambda u_{i}|^p\mid\mathcal{F}_{T} \bigg) \nonumber \\
    {}\le c_p \lambda^{-p(T+1)}\Bigg( & \sum_{i=1}^d \sum_{s=1}^{Z_T^{i_0, i}} \mathbb{E}(|\boldsymbol{u} \cdot \boldsymbol{Z}_{T,1,s}^{i} - \mathbb{E}(\boldsymbol{u} \cdot \boldsymbol{Z}_{T,1,s}^{i})|^p\mid\mathcal{F}_{T})\nonumber\\
    &{}+ \left(\sum_{i=1}^d \sum_{s=1}^{Z_T^{i_0, i}} \mathbb{E}((\boldsymbol{u} \cdot \boldsymbol{Z}_{T,1,s}^{i} - \mathbb{E}(\boldsymbol{u} \cdot \boldsymbol{Z}_{T,1,s}^{i}))^2\mid\mathcal{F}_{T})\right)^{p/2}\Bigg) \nonumber\\
     \le c_p \lambda^{-p(T+1)}\Bigg(&\sum_{i=1}^d \sum_{s=1}^{Z_T^{i_0, i}} 2^p\mathbb{E}((\boldsymbol{u} \cdot \boldsymbol{Z}_{T,1,s}^{i})^p \mid\mathcal{F}_{T}) 
    + \left(\sum_{i=1}^d \sum_{s=1}^{Z_T^{i_0, i}} 4\mathbb{E}((\boldsymbol{u} \cdot \boldsymbol{Z}_{T,1,s}^{i})^2\mid\mathcal{F}_{T})\right)^{p/2}\Bigg) \label{eq:non_centered} \\
    {}= c_p \lambda^{-p(T+1)}\Bigg(&\sum_{i=1}^d  2^p Z_T^{i_0, i}\mathbb{E}((\boldsymbol{u} \cdot \boldsymbol{Z}_{1}^{i})^p) 
    + \left(\sum_{i=1}^d 4 Z_T^{i_0, i} \mathbb{E}((\boldsymbol{u} \cdot \boldsymbol{Z}_{1}^{i})^2)\right)^{p/2}\Bigg) \\%\label{eq:last_moment}
     {}\le c_p \lambda^{-p(T+1)}\big(&2^p C_1|\boldsymbol{Z}_T^{i_0}|  |\boldsymbol{u}| + \left(4 C_1|\boldsymbol{Z}_T^{i_0}|  |\boldsymbol{u}|\right)^{p/2}\big), \label{eq:upper_bound}
\end{align}
 where \eqref{eq:non_centered} follows by \eqref{eq:binomial} and \ref{eq:upper_bound} requires the substitution of \eqref{eq:finite_p}.

Setting $a_1 := 2^p c_p \max\{C_1, C_1^{p/2}\}$ yields the desired result.
\end{proof}
\begin{lemma} \label{lemma:E_D-unconditional}
%Based on Lemma \ref{lemma:E_D}, we can prove that for $p \in [2k]$ and some positive, finite constants $a_1, a_2$, we have that
Under Assumptions \ref{infinite_matrix} and \ref{R-positive}, for $p \in [2k]$, there exists a finite, positive constant $a_2$ such that
    \begin{equation}
        \mathbb{E}(|D_{T+1}^{(i_0)}|^p) \le a_2 |\boldsymbol{u}|^p\left(\lambda^{-p(T+1)+T} + \lambda^{-p(T/2+1)}\right).
    \end{equation}
\end{lemma}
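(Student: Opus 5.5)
Take unconditional expectations in the conditional bound of Lemma \ref{lemma:E_D}. By the tower rule,
\begin{equation*}
\mathbb{E}(|D_{T+1}^{(i_0)}|^p) \le a_1 \lambda^{-p(T+1)} |\boldsymbol{u}|^p \left( \mathbb{E}(|\boldsymbol{Z}_T^{i_0}|) + \mathbb{E}(|\boldsymbol{Z}_T^{i_0}|^{p/2}) \right).
\end{equation*}
It then suffices to show two moment bounds: $\mathbb{E}(|\boldsymbol{Z}_T^{i_0}|) \le b_1 \lambda^T$ and $\mathbb{E}(|\boldsymbol{Z}_T^{i_0}|^{p/2}) \le b_2 \lambda^{pT/2}$, with constants independent of $T$. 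Substituting these gives the terms $\lambda^{-p(T+1)+T}$ and $\lambda^{-p(T+1)+pT/2} = \lambda^{-p(T/2+1)}$, and we set $a_2 := a_1 \max\{b_1, b_2\}$.

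For the first moment, $\mathbb{E}(|\boldsymbol{Z}_T^{i_0}|) = \boldsymbol{e}_{i_0}^\top \boldsymbol{M}^T \boldsymbol{1}$. Since $d<\infty$ and $\boldsymbol{u}$ is strictly positive, $\boldsymbol{1} \le (\min_j u_j)^{-1}\boldsymbol{u}$ componentwise. Using $\boldsymbol{M}^T\boldsymbol{u} = \lambda^T \boldsymbol{u}$, we get $\mathbb{E}(|\boldsymbol{Z}_T^{i_0}|) \le \lambda^T u_{i_0}/\min_j u_j$. This is the bound with $b_1 = u_{i_0}/\min_j u_j$.

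For the $p/2$-th moment, write $|\boldsymbol{Z}_T^{i_0}| \le (\min_j u_j)^{-1}\, \boldsymbol{u}\cdot\boldsymbol{Z}_T^{i_0} = (\min_j u_j)^{-1} \lambda^T W_T^{(i_0)}$. This reduces the claim to $\sup_T \mathbb{E}((W_T^{(i_0)})^{p/2}) < \infty$. If $p/2 \le 1$, Jensen's inequality and the martingale property give $\mathbb{E}((W_T)^{p/2}) \le (\mathbb{E} W_T)^{p/2} = u_{i_0}^{p/2}$.

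If $p/2>1$, induct on $p$ in steps. Write $W_T = W_0 + \sum_{t=1}^T D_t$ and apply Rosenthal's inequality \eqref{eq:martingale_rosenthal} with exponent $q = p/2 \ge 2$. If $1<q<2$, use Burkholder's inequality or simply $\mathbb{E}(W_T^q) \le (\mathbb{E}W_T^2)^{q/2}$ and reduce to the case $q=2$. The conditional quantities $\mathbb{E}(D_t^2\mid\mathcal{F}_{t-1})$ and $\mathbb{E}(|D_t|^q)$ are controlled by Lemma \ref{lemma:E_D} at exponents $2$ and $q$. These involve $\mathbb{E}(|\boldsymbol{Z}_{t-1}|^{q/2})$, i.e.\ moments of order at most $p/4$, which are available by the inductive hypothesis as $O(\lambda^{(t-1)q/2})$.

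For instance, $\mathbb{E}(D_t^2\mid\mathcal{F}_{t-1}) \le c\,\lambda^{-2t}|\boldsymbol{Z}_{t-1}| \le c'\lambda^{-t} W_{t-1}$. The sum $\sum_t \lambda^{-t}W_{t-1}$ then has $q/2$-moment bounded uniformly in $T$ by Minkowski's inequality and the inductive bound on $\sup_t \mathbb{E}(W_t^{q/2})$. Likewise $\sum_t \mathbb{E}|D_t|^q \le \sum_t c\,\lambda^{-qt}(\lambda^{t-1} + \lambda^{(t-1)q/2})$, which is a convergent geometric series since $\lambda>1$ and $q>1$. Hence $\sup_T \mathbb{E}(W_T^{q}) < \infty$, with the base cases $q\le 1$ handled as above.

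The main obstacle is this uniform-in-$T$ moment bound on the martingale $W_T$. It requires an induction over dyadic exponents, which is where the finite $2k$th offspring moments \eqref{eq:finite_moments} enter. The rest is bookkeeping of powers of $\lambda$, and the constant $a_2$ can absorb $u_{i_0}$, $\min_j u_j$, and the inductive constants.
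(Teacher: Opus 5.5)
Your proof is correct and shares the paper's outer structure: the tower rule applied to Lemma~\ref{lemma:E_D}, followed by growth bounds for $\mathbb{E}(|\boldsymbol{Z}_T^{i_0}|)$ and $\mathbb{E}(|\boldsymbol{Z}_T^{i_0}|^{p/2})$. Where you differ is in how both growth bounds are justified, and in each case your argument is more complete. For the first moment, the paper uses a Perron--Frobenius decomposition $\boldsymbol{M}^T=\lambda^T\boldsymbol{M}_1+\boldsymbol{M}_2^T$ with $|\boldsymbol{M}_2^T|=O(\alpha^T)$, $\alpha<\lambda$. That tacitly requires primitivity and fails for a periodic irreducible $\boldsymbol{M}$. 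Your domination $\boldsymbol{1}\le\boldsymbol{u}/\min_j u_j$, combined with $\boldsymbol{M}^T\boldsymbol{u}=\lambda^T\boldsymbol{u}$, needs only irreducibility and $d<\infty$. For the $p/2$-th moment the difference is more substantial. The paper simply asserts $\mathbb{E}(|\boldsymbol{Z}_T^{i_0}|^{p})\le C_3\lambda^{Tp}$ ``following the reasoning that led to \eqref{eq:frobenius}''. That reasoning is linear and does not reach higher moments. Moreover, since $\lambda^T W_T^{(i_0)}/\max_j u_j\le|\boldsymbol{Z}_T^{i_0}|\le\lambda^T W_T^{(i_0)}/\min_j u_j$, the assertion is equivalent to uniform $L^{p}$-boundedness of the martingale, which is a statement of the same kind as Proposition~\ref{prop:W_vs_Z1} itself. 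You reduce to $\sup_T\mathbb{E}((W_T^{(i_0)})^{p/2})<\infty$ and obtain order $q$ from order $q/2$ via \eqref{eq:martingale_rosenthal}, Minkowski and Lemma~\ref{lemma:E_D}. This supplies the missing step without circularity; iterated up to order $2k$, it would in fact prove Proposition~\ref{prop:W_vs_Z1} directly. Two small points need tidying. First, Lemma~\ref{lemma:E_D} is stated only for integer exponents, but for odd $p$ your $q=p/2$ is a half-integer. You can either observe that its proof holds for real exponents in $[2,2k]$, replacing the H\"older step by $|\boldsymbol{u}\cdot\boldsymbol{Z}_1^i|^q\le|\boldsymbol{u}|^q|\boldsymbol{Z}_1^i|^q$ and the $c_r$-inequality, or use $\mathbb{E}(W_T^q)\le(\mathbb{E}(W_T^{\lceil q\rceil}))^{q/\lceil q\rceil}$ and run a strong induction over integer exponents. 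Second, like the paper, you use $d<\infty$ and \eqref{eq:finite_moments}. These are not part of Assumptions~\ref{infinite_matrix} and~\ref{R-positive}, but they are in force in the context of Proposition~\ref{prop:W_vs_Z1}.
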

\begin{proof}
Using the tower property for %\eqref{eq:before_rosenthal} and combining with
\eqref{eq:only_D}, we get
\begin{align}
   \mathbb{E}(|D_{T+1}^{(i_0)}|^p) &\le %\mathbb{E}\left(c_p \lambda^{-p(T+1)}\left(2^p C_1|\boldsymbol{Z}_T^{i_0}| |\boldsymbol{u}| + \left(4 C_1|\boldsymbol{Z}_T^{i_0}| |\boldsymbol{u}|\right)^{p/2}\right)\right) \nonumber \\
   %&=
   a_1\lambda^{-p(T+1)}|\boldsymbol{u}|^{p}\left(\mathbb{E}(|\boldsymbol{Z}_T^{i_0}|) + \mathbb{E}\left(|\boldsymbol{Z}_T^{i_0}|^{p/2} \right)\right). \label{eq:before_perron}
\end{align}
For a matrix $\boldsymbol{M}$, let $|\boldsymbol{M}|$ denote the sum of its elements. Under Assumptions \ref{infinite_matrix} and \ref{R-positive}, by the Perron--Frobenius theorem (see e.g.\ \cite[Theorem II.5.1]{harris1963}), we can express $\boldsymbol{M}^T$ as
\begin{equation*}
    \boldsymbol{M}^T = \lambda^T \boldsymbol{M}_1 +  \boldsymbol{M}_2^T,
\end{equation*}
where $\boldsymbol{M}_1 = (u_i \nu_j)_{i,j \in [d]}$ and $|\boldsymbol{M}_2^T| = O(\alpha^T)$ as $T\to\infty$ for some $\alpha \in (0,\lambda)$. Further, one can express the expected population size at generation $T$ in terms of the $T$th power of $\boldsymbol{M}$:
\begin{equation*}
\mathbb{E}(\boldsymbol{Z}_T^{i_0}) = (\boldsymbol{e}_{i_0}^{\top} \boldsymbol{M}^T)^{\top},
\end{equation*}
and so under condition \eqref{eq:finite_moments} we have for some positive, finite constant $C_2$ that 
\begin{equation}
    \mathbb{E}(|\boldsymbol{Z}_T^{i_0}|) \le C_2 \lambda^T  \label{eq:frobenius}
\end{equation}
and $\mathbb{E}(|\boldsymbol{Z}_T^{i_0}|^p)$ grows at most at the same rate as $\lambda^{Tp}$,  i.e.\ $\mathbb{E}(|\boldsymbol{Z}_t^{i_0}|^p)< C_3 \lambda^{tp}$ for some positive, finite $C_3$. Hence \eqref{eq:before_perron} becomes %and using \eqref{eq:frobenius}:
\begin{align*}
\mathbb{E}(|D_{T+1}^{(i_0)}|^p)
%&c_p \lambda^{-p(T+1)}\left( 2^p C_1|\boldsymbol{u}|\mathbb{E}(|\boldsymbol{Z}_T^{i_0}|) + 4^{p/2} C_1^{p/2}\mathbb{E}\left(|\boldsymbol{Z}_T^{i_0}| \right)^{p/2}\right) \\
    &\le a_1 \lambda^{-p(T+1)}|\boldsymbol{u}|^p\left( C_2 \lambda^T   +   C_3 \lambda^{Tp/2}\right) 
    %= \frac{a_1 \lambda^T}{\lambda^{p(T+1)}} + \frac{a_2 \lambda^{pT/2}}{\lambda^{p(T+1)}}
    = a_1 |\boldsymbol{u}|^p\left(\frac{ C_2}{\lambda^{p(T+1)-T}} + \frac{C_3}{\lambda^{p(T/2+1)}}\right).
\end{align*}
Setting $a_2 := a_1 \max\{C_2,  C_3 \}$ yields the desired result.
\end{proof}

Now that we have an upper bound for the $p$th moment of $D_T^{(i_0)}$, we apply the inequality \eqref{eq:martingale_rosenthal} for the martingale $(W^{(i_0)}_T)$, applying Lemmata \ref{lemma:E_D} and \ref{lemma:E_D-unconditional} to bound the respective expectations that appear:
\begin{align}
&\mathbb{E}(\lvert W_T^{(i_0)}\rvert^p)  \nonumber\\
&\le C \left\{
\mathbb{E}\left[\left(\sum_{t=1}^T \mathbb{E}\!\left((D_t^{(i_0)})^2 \mid \mathcal{F}_{t-1}\right)\right)^{p/2}\right]
+ \sum_{t=1}^T \mathbb{E}\left(\lvert D_t^{(i_0)}\rvert^p\right)
\right\} \nonumber \\
% Rosenthal
&\le C \left\{
\mathbb{E}\left[\left(\sum_{t=1}^T 2 a_1 \lambda^{-2t} |\boldsymbol{u}|^2 |\boldsymbol{Z}_{t-1}^{i_0}| \right)^{p/2}\right]+
\sum_{t=0}^{T-1} a_2 |\boldsymbol{u}|^p\left(\lambda^{-p(t+1)+t} + \lambda^{-p(t/2+1)}\right)
\right\} \nonumber \\
&= C |\boldsymbol{u}|^{p}\left\{
(2a_1)^{p/2}\mathbb{E}\left(\left( \sum_{t=1}^T \lambda^{-2t}|\boldsymbol{Z}_{t-1}^{i_0}| \right)^{p/2}\right)  + \frac{a_2  (1-\lambda^{-T(p-1)})}{\lambda^{p}-\lambda} + \frac{a_2 (1-\lambda^{-pT/2})}{\lambda^p(1-\lambda^{-p/2})}
\right\} \label{eq:needs_minkowski}.
\end{align}
Let us apply Minkowski's inequality \cite[e.g.][Theorem 1.31]{kallenberg} to the first term in \eqref{eq:needs_minkowski}:
\begin{equation}
\mathbb{E}\left(\left(\sum_{t=1}^T \lambda^{-2t}|\boldsymbol{Z}_{t-1}^{i_0}| \right)^{p/2}\right) \le   
\left(\sum_{t=1}^T \left(\mathbb{E}\left((\lambda^{-2t}|\boldsymbol{Z}_{t-1}^{i_0}|)^{p/2}\right) \right)^{2/p}\right)^{p/2} \label{eq:last_Z}.
\end{equation}
We now upper bound $\mathbb{E}\left(|\boldsymbol{Z}_{t-1}^{i_0}|^{p/2}\right)$ in \eqref{eq:last_Z} by $C_4 \lambda^{(t-1)p/2}$, following the reasoning that led to \eqref{eq:frobenius}:
\begin{align}
\mathbb{E}\left(\left(\sum_{t=1}^T \lambda^{-2t}|\boldsymbol{Z}_{t-1}^{i_0}| \right)^{p/2}\right) 
& \le \left(\sum_{t=1}^T \lambda^{-2t}(C_4 \lambda^{(t-1)p/2} )^{2/p}\right)^{p/2} \nonumber\\
&=  \left(\sum_{t=1}^T \lambda^{-t-1}C_4^{2/p} \right)^{p/2}
= C_4\left(\frac{1-\lambda^{-T}}{\lambda^2(1-\lambda^{-1})} \right)^{p/2} \label{eq:after_minkowski}.
\end{align}
Let us substitute \eqref{eq:after_minkowski} into \eqref{eq:needs_minkowski} to obtain:
\begin{align}
   & \mathbb{E}\left(\lvert W_T^{(i_0)}\rvert^p\right)\nonumber \\
   %&\le C c_p  (2 C_1 |\boldsymbol{u}|)^{p}\left\{c_p^{(p-2)/2}\mathbb{E}\left(\left(\sum_{t=1}^T \lambda^{-2t}|\boldsymbol{Z}_{t-1}^{i_0}| \right)^{p/2}\right)  + \frac{C_2  (1-\lambda^{-T(p+1)})}{\lambda(\lambda^{p+1}-1)} + \frac{C_3 (1-\lambda^{-pT/2})}{\lambda(1-\lambda^{-p/2})}\right\} \nonumber \\
& \le C |\boldsymbol{u}|^{p} \left\{ (2a_1)^{p/2} C_4
\left(\frac{1-\lambda^{-T}}{\lambda^2(1-\lambda^{-1})} \right)^{p/2}  +  \frac{a_2  (1-\lambda^{-T(p-1)})}{\lambda^{p}-\lambda} + \frac{a_2 (1-\lambda^{-pT/2})}{\lambda^p(1-\lambda^{-p/2})}
\right\}\label{eq:many_constants}.
\end{align}
By Fatou's Lemma \cite[Chapter 6, Theorem 2]{shiryaev}, for any random variables $\eta, \xi_1, \xi_2,\dots$ that satisfy for all $n \in \mathbb{N}_+$$, \xi_n > \eta$ and $\mathbb{\eta} > -\infty$, we have that 
\begin{equation}
    \mathbb{E}\left(\liminf_{n\to\infty} \xi_n\right)\le \liminf_{n\to\infty} \mathbb{E}(\xi_n) \label{eq:fatou}.
\end{equation}
Now, we will upper bound our quantity of interest, $\mathbb{E}(|W^{(i_0)}|^p)$, using \eqref{eq:many_constants}:
\begin{align}
 \mathbb{E}(\lvert W^{(i_0)}\rvert^p) &=\mathbb{E}\left(\lim_{T \rightarrow \infty}\lvert W_T^{(i_0)}\rvert^p\right) =\mathbb{E}\left(\liminf_{T \rightarrow \infty}\lvert W_T^{(i_0)}\rvert^p\right) \nonumber \\
    &\le \liminf_{T \rightarrow \infty}\mathbb{E}(\lvert W_T^{(i_0)}\rvert^p)\label{eq:exchange}  \\
 %   & \le \liminf_{T \rightarrow \infty} C c_p (2 C_1 |\boldsymbol{u}|)^{p} \left\{ c_p^{(p-2)/2} C_4 \left(\frac{1-\lambda^{-T}}{\lambda(1-\lambda)} \right)^{p/2}  + \frac{C_2  (1-\lambda^{-T(p+1)})}{\lambda(\lambda^{p+1}-1)} + \frac{C_3 (1-\lambda^{-pT/2})}{\lambda(1-\lambda^{-p/2})}\right\} \nonumber \\
& \le C |\boldsymbol{u}|^{p} \left\{ (2a_1)^{p/2} C_4 (\lambda^2(1-\lambda^{-1}))^{-p/2}  +  \frac{a_2}{\lambda^{p}-\lambda} + \frac{a_2}{\lambda^p(1-\lambda^{-p/2})}
\right\}, \label{eq:not_infty}
\end{align}
where \eqref{eq:exchange} follows from \eqref{eq:fatou}. If $d < \infty$, \eqref{eq:not_infty} is bounded for any $p \in [2k]$.

\subsection{Extinction Probability and Generating Functions} \label{f_and_q}
We will prove \eqref{eq:fixed_point}: that for any $t \in \mathbb{N}_+$ and $r \in [2k]$, we have $f_{t,r}^{i_0}(\boldsymbol{q}) = q_{i_0}$.
\begin{proof}
Observe that for any $i \in S$,
\begin{equation}
    f^{i}_{t,r}(\boldsymbol{0}) = f_t^{i}(\boldsymbol{0}). \label{eq:generating0}
\end{equation}
By the definition of the probability of extinction,
    \begin{align}
        q_{i_0} &= \lim_{t \rightarrow \infty} f_t^{i_0}(\boldsymbol{0}) %\nonumber \\
         = \lim_{t \rightarrow \infty} f_{t+1}^{i_0}(\boldsymbol{0}) %\nonumber \\
         = \lim_{t \rightarrow \infty} f_{t+1, r}^{i_0}(\boldsymbol{0}) %\nonumber \\
         = \lim_{t \rightarrow \infty} f_{1, r}^{i_0} (\boldsymbol{f}_{t, r}(\boldsymbol{0})) \label{eq:lim_f}
        \end{align}
    where $\boldsymbol{f}_{t, r}(\boldsymbol{0}) := \{f_{t, r}^{i}(\boldsymbol{0})\}_{i \in S}$. By the continuity of $\boldsymbol{f}_{t,r}$ it is possible to put the limit on the right-hand side of \eqref{eq:lim_f} inside the function:
    \begin{align*}
     \lim_{t \rightarrow \infty} f_{1, r}^{i_0} (\boldsymbol{f}_{t, r}(\boldsymbol{0}))
        & =  f_{1, r}^{i_0} (\lim_{t \rightarrow \infty}\boldsymbol{f}_{t, r}(\boldsymbol{0}))
        = f_{1, r}^{i_0} (\boldsymbol{q}), %\label{eq:lim_q}
        \end{align*}
where the last equality follows from \eqref{eq:generating0}. Iterating the procedure described above we obtain \eqref{eq:fixed_point}. %, we can obtain, for any $t \in \mathbb{N}_+$,  
%\begin{equation*}
%    f_{t,r}^{i_0}(\boldsymbol{q}) = q_{i_0}.
%\end{equation*}
\end{proof}
\section{Additional Algorithms}
\subsection{Sampling from a mixture distribution} 
Algorithm~\ref{alg:samples} describes in more detail the composition sampling method from Section \ref{composite_sampling} for the purpose of sampling $W^{(j)}$, $j \in S$, for a process that either becomes extinct or goes to infinity.
\begin{algorithm}
\caption{Sampling from $W^{(j)}$}
\label{alg:samples}
\begin{algorithmic}[1] % [1] = line numbers
\REQUIRE Density approximation $\tilde{w}_j$, number of samples $n \in \mathbb{N}_+$, extinction probability $q_j$
\FOR{$l= 1$ \TO $n$}
    \STATE With probability $q_j$, sample 0.
    \STATE Otherwise sample $W^{(j)}$ by choosing one of the values $k\frac{2}{\lambda^L z}, k \in \pm[M]$ with probability proportional to $\tilde{w}_j(k\frac{2}{\lambda^L z})$.
\ENDFOR
\end{algorithmic}
\end{algorithm}
\subsection{Estimating coalescence probability through Theorem \ref{exact_thm}}
Algorithm~\ref{alg:probability} describes in more detail the procedure of approximating the coalescence time distribution from Theorem \ref{exact_thm}, which was used in Figure \ref{fig:coalescence} in Example \ref{supercritical}.
\begin{algorithm}
\caption{Simulation procedure with densities $\boldsymbol{w}$}
\label{alg:probability}
\begin{algorithmic}[1]
\REQUIRE Number of successful simulations $n$, set of approximated densities $\boldsymbol{w}$, generation of interest $t$
\STATE Set $N = 0$.
\WHILE{$N < n$}
    \STATE Simulate $\boldsymbol{Z}_{t}$ from generation 0 to generation $t$.
    \FOR{$i \in [d]$}
        \STATE Sample $\z$ copies of $W^{(i)}$ using Algorithm \ref{alg:samples}.
    \ENDFOR
    \IF{(for all $i \in [d]: W^{(i)} = 0 )\lor( \z = 0)$}
    \STATE Discard the simulation.
    \ELSE
        \STATE Compute and save
        \begin{equation}
       a_N :=     \frac{\sum_{i=1}^d\sum_{s=1}^{\z} (W_{t, s}^{(i)})^k}
            {\left(\sum_{i=1}^d\sum_{s=1}^{\z} W_{t, s}^{(i)}\right)^k}.
        \end{equation}
        \STATE Set $N = N+1$.
    \ENDIF
\ENDWHILE
\STATE Compute the coalescence probability estimate by averaging all the stored values $(a_j)_{j=0}^{n-1}$
\begin{equation}
    1 - \frac{1}{n}\sum_{j=0}^{n-1} a_j.
\end{equation}
\end{algorithmic}
\end{algorithm}
\subsection{Algorithm for direct genealogy simulation}\label{genealogy_simulation}
To assess the results of our method (Theorems \ref{exact_thm} and \ref{hs_thm}), we perform a direct simulation of the genealogy of a branching process using Algorithm~\ref{alg:direct}. The output is plotted in Figure \ref{fig:coalescence} in Example \ref{supercritical}.
\begin{algorithm}
\caption{Direct simulation of genealogy}
\label{alg:direct}
\begin{algorithmic}[1]
\REQUIRE Number of simulations $n \in \mathbb{N}_+$, number of generations simulated $T \in \mathbb{N}_+$, generation of interest for the distribution function $t \in [0, T)$, root ancestor $\boldsymbol{Z}_0 = \boldsymbol{e}_{i_0}$.
\FOR{$j = 1$ \TO $n$}
    \FOR{$l=1$ \TO $t$}
    \STATE Knowing $\boldsymbol{Z}_{l-1}$, simulate $\boldsymbol{Z}_l$ according to the dynamics of the process.
\ENDFOR
    \STATE Index individuals at generation $t$ with $[|\boldsymbol{Z}_t|]$ and store the indices in a vector $\boldsymbol{I}_t$.
    \FOR{$j=t+1$ \TO $T$}
        \STATE Simulate $\boldsymbol{Z}_j$ based on the population at generation $j-1$, $\boldsymbol{Z}_{j-1}$. Copy the index of the parent from $\boldsymbol{I}_{j-1}$ to their offspring at generation $j$. Store the new indices of the offspring in a new vector $\boldsymbol{I}_{j}$. 
    \ENDFOR
    \STATE Sample $k$ individuals at generation $T$ and compare their corresponding elements of $\boldsymbol{I}_T$.
\ENDFOR
\end{algorithmic}
\end{algorithm}
%% else use the following coding to input the
% \printbibliography
\bibliographystyle{elsarticle-num}
\bibliography{biblio}

\end{document}